\def\R{\mathbb{R}}
\def\T{\mathbb{T}}
\def\epsilon{\varepsilon}
\newcommand{\be}{\begin{equation}}
\newcommand{\ee}{\end{equation}}
\newcommand{\baa}{\begin{array}}
\newcommand{\eaa}{\end{array}}
\newcommand{\ba}{\begin{eqnarray}}
\newcommand{\ea}{\end{eqnarray}}
\newtheorem{theorem}{Theorem}[section]
\newtheorem{lemma}[theorem]{Lemma}
\newtheorem{corollary}[theorem]{Corollary}
\newtheorem{definition}[theorem]{Definition}
\newtheorem{remark}[theorem]{Remark}
\newtheorem{claim}[theorem]{Claim}
\numberwithin{equation}{section}
\newenvironment{proof}[1][Proof]{\noindent\textbf{#1.} }{\hfill $\Box$}
\begin{document}
\date{}
\title{\bf{Curved fronts of bistable reaction-diffusion equations in spatially periodic media \thanks{Guo and Liu are partially supported by by NSF grant 1826801. Li is supported by the National Natural Science
Foundation of China under grants 11731005 and 11671180. Wang is supported by the National Natural Science
Foundation of China under grant 11371179.}}}
\author{Hongjun Guo  \textsuperscript{a}, Wan-Tong Li \textsuperscript{b}, Rongsong Liu \textsuperscript{a} and Zhi-Cheng Wang \textsuperscript{b}\\
\\
\footnotesize{\textsuperscript{a} Department of Mathematics and Statistics, University of Wyoming, Laramie, USA}\\
\footnotesize{\textsuperscript{b} School of Mathematics and Statistics, Lanzhou University,
Lanzhou, China}}
\maketitle

\begin{abstract} In this paper, curved fronts are constructed for spatially periodic bistable reaction-diffusion equations under the a priori assumption that there exist pulsating fronts in every direction. Some sufficient and some necessary conditions of the existence of curved fronts are given. Furthermore, the curved front is proved to be unique and stable. Finally, a curved front with varying interfaces is also constructed.  Despite the effect of the spatial heterogeneity, the result shows the existence of curved fronts for spatially periodic bistable reaction-diffusion equations which is known for the homogeneous case. 
\vskip 0.1cm
\noindent\textit{Keywords. Pulsating front; Curved front; Spatially periodic reaction-diffusion equation; Uniqueness; Stability.}

\end{abstract}


\section{Introduction}
\noindent
In this paper, we consider spatially periodic reaction-diffusion equations of the type
\begin{equation}\label{eq1.1}
u_t-\Delta u=f(x,y,u),\quad\,(t,x,y)\in\R\times\R^2,
\end{equation}
where $u_t=\frac{\partial u}{\partial t}$ and $\Delta=\partial_{xx}+\partial_{yy}$ denotes the Laplace operator with respect to the space variables $(x,y)\in\R^2$. The reaction term $f(x,,y,u)$ is assumed to be periodic in $(x,y)$ and bistable in $u$. More precisely,  we assume throughout this paper that
\begin{itemize}
  \item[(F1)] $f(x,y,u)$ is continuous, of class $C^{\alpha}$ in $(x,y)$ uniformly in $u\in [0,1]$ with $\alpha\in (0,1)$, of class $C^2$ in $u$ uniformly in $(x,y)\in\R^2$ with $f_{u}(x,y,u)$ and $f_{uu}(x,y,u)$ being Lipschitz continuous in $u\in\R$;
  \item[(F2)] $f(x,y,u)$ is $L$-periodic with respect to $(x,y)$ where $L=(L_1,L_2)\in\R^2$, that is, $f(x+k_1L_1,y+k_2L_2,u)=f(x,y,u)$ for any $k_1$, $k_2\in\mathbb{Z}$;
  \item[(F3)] for every $(x,y)\in\R^2$, $0$ and $1$ are stable zeroes of $f(x,y,\cdot)$, that is, 
$$f(x,y,0)=f(x,y,1)=0,$$
and there exist $\lambda>0$ and $\sigma\in (0,1/2)$ such that
$$-f_u(x,y,u)\ge \lambda \hbox{ for all $(x,y,u)\in \R^2\times [0,\sigma]$ and $(x,y,u)\in \R^2\times [1-\sigma,1]$}.$$
\end{itemize}
A typical example of $f(x,y,u)$ is the cubic nonlinearity
$$f(x,y,u)=u(1-u)(u-\theta_{x,y}),$$
where $\theta_{x,y}\in (0,1)$ is a $L$-periodic function. The equation \eqref{eq1.1} is a special generalization of the famous Allen-Cahn equation \cite{AC}. For mathematical convenience, we extend $f(x,y,u)$ out of the interval $u\in[0,1]$ such that
\be\label{lambda}
-f_u(x,y,u)\ge \lambda \hbox{ for all $(x,y,u)\in \R^2\times (-\infty,\sigma]$ and $(x,y,u)\in \R^2\times [1-\sigma,+\infty)$}.
\ee
Then, $f(x,y,u)$ is globally Lipschitz continuous in $u\in\R$.

Before going further on, we first recall some well-known results in the homogeneous case, that is, 
\be\label{homo-eq}
u_t-\Delta u=f(u),\ (t,x)\in\R\times\R^N,
\ee
where $f$ is of bistable type, that is, $f(0)=f(1)=f(\theta)$, $f<0$ on $(0,\theta)$ and $f>0$ on $(\theta,1)$, for some $\theta\in(0,1)$. For one-dimensional space, it follows from celebrated results due to Fife and McLeod \cite{FM} that \eqref{homo-eq} admits a unique (up to shifts) traveling front $\phi(x-c_f t)$ satisfying
$$0<\phi<1,\ \phi(-\infty)=1 \hbox{ and } \phi(+\infty)=0.$$
Moreover, the speed $c_f$ has the sign of $\int_0^1 f(u)du$ and the front is globally and exponentially stable. A trivial extension of the traveling front to higher dimensional spaces is the planar front $\phi(x\cdot e- c_f t)$ where $e\in\mathbb{S}^{N-1}$ denotes the propagation direction. Notice that every level set of a planar front is a plane. Except planar fronts, more types of fronts are also known to exist in high dimensional spaces, such as $V$-shaped fronts, conical shaped fronts and pyramidal fronts, see Hamel et. al. \cite{HMR1}, Ninomiya and Taniguchi \cite{NT1} and Taniguchi  \cite{T1,T2}. All these fronts are transition fronts connecting $0$ and $1$ defined by Hamel \cite{H}. For above fronts, their interfaces between $0$ and $1$ can be given by their level sets and different shapes of interfaces actually show some structures of the solutions. One can roughly imagine a global appearance of such solutions in the framework of transition fronts by noticing that the solutions are approaching to $1$ and $0$ on one side and the other of the interfaces, respectively.

As far as a spatially periodic bistable reaction-diffusion equation considered, the situation is more complicated than the homogenous case. Because of the effect of hetereogeneities, there may even not exist transition fronts connecting states $0$ and $1$, see Zlato{\v{s}} \cite{Z3}. However, what we are concerned in this paper is the existence of curved fronts when there exist some fronts in every direction, that is, pulsating fronts. We now introduce the notion of pulsating front by referring to \cite{BH0,SKT,X1,X2,X3}.
\begin{definition}\label{PF}
Denote a periodic cell by $\mathbb{T}^2=[0,L_1]\times [0,L_2]$. A pair $(U_e,c_e)$ with $U_e:\R\times\mathbb{T}^2\rightarrow \R$ and $c_e\in\R$ is said to be a pulsating front of \eqref{eq1.1} with effective speed $c_e$ in the direction $e\in\mathbb{S}$ connecting $0$ and $1$ if the two following conditions are satisfied:
\begin{itemize}
\item[(i)] For every $\xi\in\R$, the profile $U_e(\xi,x,y)$ is $L$-periodic in $(x,y)$ and satisfies
$$\lim_{\xi\rightarrow +\infty} U_e(\xi,x,y)=0,\ \lim_{\xi\rightarrow -\infty} U_e(\xi,x,y)=1,\,\text{ uniformly for $(x,y)\in\mathbb{T}^2$}.$$
\item[(ii)] The map $u(t,x,y):=U_e((x,y)\cdot e-c _e t,x,y)$ is an entire (classical) solution of the parabolic equation \eqref{eq1.1}.
\end{itemize}
\end{definition}

We now recall some existence results of pulsating fronts for the general reaction-diffusion equation in spatially periodic media
\be\label{1.6}
u_t=\sum_i (a(x)u_{x_i})_{x_i} +\sum_i b_i(x)u_{x_i}+f(x,u), \ t\in\R,\ x\in\R^N.
\ee
For one dimensional case of \eqref{1.6} when $f(x,u)=g(x)f(u)$, Nolen and Ryzhik \cite{NR4} proved the existence of pulsating fronts with nonzero speed by provided with some restrictions for $g$ and $f$. Moreover, Ducrot, Giletti and Matano \cite{DGM} also got some existence results of pulsating fronts with a positive speed, if the solutions of \eqref{1.6} with some compactly supported initial conditions can converge locally uniformly to $1$ as $t\rightarrow +\infty$. Still for one-dimensional case, Ding, Hamel and Zhao \cite{DHZ} applied the implicit function theorem and abstract results of Fang and Zhao \cite{FZ} to get the existence of pulsating fronts for small period and large period. For higher dimensions, when the diffusivity matrix $a$ is close to identity and $f$ is independent of $x$, the existence of pulsating fronts is obtained by Xin \cite{X1,X2,X3} through refined perturbation arguments. Ducrot \cite{D} also got some existence results of fronts connecting $0$ and $1$ in every direction for slowly varying medium and rapidly varying medium (that is, $d<<1$ and $d>>1$ respectively when the reaction term is $f(dx,u)$), in which the fronts are either moving pulsating waves or standing transition waves. Although such existence results are known, there may not exist pulsating fronts in general. Zlato\v{s} \cite{Z3} constructed a periodic pure bistable reaction such that there is no pulsating fronts of \eqref{eq1.1}. We also refer to \cite{DHZ,X4,XZ} for some nonexistence results.

In this work, we aim to construct curved fronts by using some pulsating fronts with nonzero speeds. Therefore, we need to assume a priori that
\begin{itemize}
\item[(H1)] $\int_{\mathbb{T}^2\times[0,1]} f(x,y,u)dxdydu\neq 0$,

\item[(H2)] for every unit vector $e\in\R^2$, the equation \eqref{eq1.1} admits a pulsating front $U_e((x,y)\cdot e-c _e t,x,y)$ with $c_e\neq 0$.
\end{itemize}
From the results of Ducrot \cite{D} and Guo \cite{G}, one knows that if (H1), (H2) hold, the propagation speed $c_e$ of the pulsating front in every direction has the sign of $\int_{\mathbb{T}^2\times[0,1]} f(x,y,u)dxdydu$. We assume without loss of generality that
\be\label{int-f}
\int_{\mathbb{T}^2\times[0,1]} f(x,y,u)dxdydu>0,
\ee
which implies $c_e>0$ for all $e\in\mathbb{S}$. Otherwise, one can replace $u$, $f$, $U_e(\xi,x,y)$ by $\tilde{u}=1-u$, $g(x,y,u)=-f(x,y,1-u)$, $\tilde{U}_e(\xi,x,y)=1-U_e(-\xi,x,y)$ and consider the new pulsating front~$\tilde{U}_e$ with speed $-c_e$. From \cite{BH2} and \cite{G}, the speed $c_e$ and the profile $U_e$ of the pulsating front are unique up to shifts in time for any direction $e$. We fix the pulsating front in every direction $e$ by 
$$U_e(0,0,0)=\frac{1}{2}.$$
From \cite{G}, we also know that $\partial_{\xi} U_e<0$, the family $\{c_e\}_{e\in\mathbb{S}}$ is uniformly bounded with respect to $e$ and the minimum and maximum of $c_e$ can be reached with the following inequality
$$0<\min_{e\in\mathbb{S}} c_e\le \max_{e\in\mathbb{S}} c_e<+\infty.$$

In the whole paper, we always assume that (F1)-(F3), (H1)-(H2) and \eqref{int-f} hold and we do not repeat it in the sequel. We now focus on construction of curved fronts by some pulsating fronts. To the best of our knowledge, few results of the existence of curved fronts are known for bistable reaction-diffusion in spatially periodic media. However, one can refer to \cite{E,EHH} for the existence of curved fronts of monostable and combustion reaction-diffusion equations with a periodic shear flow and refer to \cite{BW} for a space-time periodic monostable reaction-advection-diffusion equation. Although the pulsating front $U_e((x,y)\cdot e-c_e t,x,y)$ is not exactly planar, every level set is still bounded with a plane. Thus, the pulsating front is also called almost-planar in the framework of transition fronts, see \cite{H}. We try to apply the ideas of Ninomiya and Taniguchi \cite{NT1} which they used for homogeneous bistable case, to construct the curved fronts. But, since the profiles $U_e$ and speeds $c_e$ of pulsating fronts are different in general with respect to the direction $e$, we have to update their ideas. 

We then claim our results. Let $\alpha\in(0,\pi)$. Then, by Assumption~(H2), there exists a pulsating front in the direction $(\cos\alpha,\sin\alpha)$, that is,
$$U_\alpha(x\cos\alpha+y\sin\alpha-c_{\alpha} t,x,y).$$
For any $\alpha$, $\beta\in(0,\pi)$, define
\be\label{U-}
U^-_{\alpha\beta}(t,x,y):=\max\{U_\alpha(x\cos\alpha+y\sin\alpha-c_{\alpha} t,x,y),U_\beta(x\cos\beta+y\sin\beta-c_{\beta} t,x,y)\},
\ee
which is a subsolution of \eqref{eq1.1}.
Our first result shows the existence of a curved front which converges to pulsating fronts along its asymptotic lines under some conditions on angles $\alpha$ and $\beta$. The curved front is actually a transition front connecting $0$ and $1$ whose interfaces can be chosen as a V-shaped curve.

\begin{theorem}\label{th1}
For any $\theta\in (0,\pi)$, let $g(\theta)=c_{\theta}/\sin\theta$.
For any $0<\alpha<\beta<\pi$ such that
\be\label{angle}
\frac{c_{\alpha}}{\sin\alpha}=\frac{c_{\beta}}{\sin\beta}:=c_{\alpha\beta}>\frac{c_{\theta}}{\sin\theta} \hbox{ for any $\theta\in (\alpha,\beta)$},\ g'(\alpha)<0 \hbox{ and } g'(\beta)>0,
\ee
there exists an entire solution $V(t,x,y)$ of \eqref{eq1.1} such that $V_t(t,x,y)>0$ for all $(t,x,y)\in\R\times\R^2$ and
\be\label{Vf}
\lim_{R\rightarrow +\infty} \sup_{x^2+(y-c_{\alpha\beta} t)^2>R^2} \Big| V(t,x,y)-U^-_{\alpha\beta}(t,x,y)\Big|=0.
\ee
\end{theorem}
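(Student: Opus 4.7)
The plan is to realize $V$ as the limit, as $n\to+\infty$, of a monotone sequence of Cauchy solutions $v_n$ squeezed between $U^-_{\alpha\beta}$ from below and a suitable supersolution from above. Since $U^-_{\alpha\beta}$ is the maximum of two classical solutions of \eqref{eq1.1}, it is a generalized subsolution. For each integer $n\geq 1$, let $v_n$ be the unique classical solution of \eqref{eq1.1} on $(-n,+\infty)\times\R^2$ with initial datum $v_n(-n,\cdot,\cdot)=U^-_{\alpha\beta}(-n,\cdot,\cdot)$. Comparison with the subsolution gives $U^-_{\alpha\beta}\leq v_n\leq 1$ on the common domain. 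Moreover $U^-_{\alpha\beta}$ is nondecreasing in $t$ (because $\partial_\xi U_e<0$ and $c_e>0$ for every $e$), so comparing $v_n(t+h,\cdot,\cdot)$ with $v_n(t,\cdot,\cdot)$ for $h>0$ yields $\partial_t v_n\geq 0$; an analogous comparison gives the monotonicity $v_{n+1}\geq v_n$ in $n$.

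The central technical step is the construction of a supersolution $V^+$ of \eqref{eq1.1} dominating $U^-_{\alpha\beta}$ such that $V^+-U^-_{\alpha\beta}$ decays to $0$ outside the growing disks $\{x^2+(y-c_{\alpha\beta}t)^2\leq R^2\}$. Following the strategy of Ninomiya--Taniguchi \cite{NT1} adapted to the periodic setting, I would try a candidate of the form
\[
V^+(t,x,y)=\min\{1,\,U_\alpha(\xi_\alpha-\omega,x,y)+U_\beta(\xi_\beta-\omega,x,y)\},\qquad \xi_e:=(x,y)\cdot e-c_e t,
\]
for a suitably large shift $\omega>0$. Where both summands are small, the supersolution inequality reduces to a bistable estimate of the form $f(x,y,a+b)\leq f(x,y,a)+f(x,y,b)+Kab$, absorbed by the exponential tails $U_e(\xi_e,x,y)\leq Ce^{-\mu\xi_e}$ available from (H2) and \eqref{lambda}. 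Where the overlap is large, $V^+$ is pinned at $1$ and the strict stability $-f_u\geq\lambda$ on $[1-\sigma,1]$ controls the transition zone via a further exponential correction absorbed in $\omega$. The geometric hypotheses \eqref{angle} enter essentially here: the equalities $g(\alpha)=g(\beta)=c_{\alpha\beta}$ force the two wave interfaces to meet along a line that translates rigidly upward at speed $c_{\alpha\beta}$ in $y$; the strict inequality $c_{\alpha\beta}>g(\theta)$ for $\theta\in(\alpha,\beta)$ prevents any intermediate direction from outrunning the envelope; and $g'(\alpha)<0<g'(\beta)$ makes $\alpha,\beta$ genuine corners rather than tangential contacts, so that the overlap region stays confined to a bounded neighborhood of the moving tip $(0,c_{\alpha\beta}t)$.

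Granting such $V^+$, the comparison principle gives $U^-_{\alpha\beta}\leq v_n\leq V^+$ on the common time interval, and interior parabolic Schauder estimates (available thanks to (F1)) produce compactness in $C^{1,2}_{\mathrm{loc}}$. Passing to the limit $n\to+\infty$ yields an entire solution $V$ of \eqref{eq1.1} satisfying $U^-_{\alpha\beta}\leq V\leq V^+$ and $\partial_t V\geq 0$. Because $V$ is nonconstant ($V\to 0$ far above the tip, while $V\geq 1/2$ near the tip), the strong maximum principle applied to the linear parabolic equation satisfied by $\partial_t V$ upgrades monotonicity to $\partial_t V>0$ everywhere. The uniform convergence \eqref{Vf} then follows from the squeeze, since both $V-U^-_{\alpha\beta}$ and $V^+-U^-_{\alpha\beta}$ tend to $0$ outside $\{x^2+(y-c_{\alpha\beta}t)^2\leq R^2\}$ as $R\to\infty$.

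The main obstacle I expect is the construction of $V^+$: unlike the homogeneous setting of \cite{NT1}, the profiles $U_e$ and speeds $c_e$ depend both on the direction $e$ and on the fast variables $(x,y)\in\T^2$, so quantitative exponential decay estimates for $U_e$ at $\xi_e=\pm\infty$ uniform in $e$ and $(x,y)$ must be combined with the strict maximality in \eqref{angle} to force the term $f_u(\cdot,V^+)$ near $u=1$ in the overlap region to lose to the corrective shift $\omega$. Once this quantitative comparison is in place, every other step is a fairly standard monotone-limit and strong-maximum-principle argument.
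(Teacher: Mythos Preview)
Your monotone Cauchy-sequence skeleton (solve \eqref{eq1.1} from time $-n$ with datum $U^-_{\alpha\beta}(-n,\cdot,\cdot)$, pass to the limit, apply the strong maximum principle to $V_t$) matches the paper exactly. The gap is the supersolution. Your candidate $V^+=\min\{1,U_\alpha(\xi_\alpha-\omega,x,y)+U_\beta(\xi_\beta-\omega,x,y)\}$ is \emph{not} the Ninomiya--Taniguchi construction, and it fails for two concrete reasons. First, ahead of both fronts (both summands small) the supersolution inequality reads $f(x,y,a)+f(x,y,b)\geq f(x,y,a+b)$, and for bistable $f$ this is generically violated: for the model cubic $f(u)=u(1-u)(u-\theta)$ one computes $f(a)+f(b)-f(a+b)=ab\,[3(a+b)-2(1+\theta)]<0$ for small $a,b>0$. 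A rigid shift $\omega$ adds no positive term to absorb this, so $LV^+<0$ there. Second, even if you patched that region, along the left arm you have $V^+\to U_\alpha(\xi_\alpha-\omega,x,y)$ while $U^-_{\alpha\beta}=U_\alpha(\xi_\alpha,x,y)$, so $V^+-U^-_{\alpha\beta}$ stays of order $\omega$ on the interface and the squeeze cannot yield \eqref{Vf}. Your sketch also never actually uses $g'(\alpha)<0<g'(\beta)$ in any computation.

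The paper's supersolution is structurally different: one takes a smooth convex curve $y=\psi(\varrho x)/\varrho$ asymptotic to the two lines $y=-x\cot\alpha$, $y=-x\cot\beta$, lets $e(x)$ be its unit normal, and sets $U^+(t,x,y)=U_{e(x)}(\xi(t,x,y),x,y)+\varepsilon\,\mathrm{sech}(\varrho x)$ with $\xi=(y-c_{\alpha\beta}t-\psi(\varrho x)/\varrho)/\sqrt{\psi'^2+1}$. Checking $LU^+\geq 0$ requires (i) Fr\'echet differentiability of $(U_e,c_e)$ in $e$ with exponential bounds on $U'_e,\partial_\xi U'_e$, to control the curvature terms $U'_{e(x)}\cdot e'(x)$, $U''_{e(x)}\cdot e'(x)\cdot e'(x)$, etc.\ by $O(\varrho)\,\mathrm{sech}(\varrho x)$; and (ii) a \emph{strict speed gap} $c_{\alpha\beta}/\sqrt{\psi'^2(\varrho x)+1}-c_{e(x)}\geq C\,\mathrm{sech}(\varrho x)$, which is exactly where $g'(\alpha)<0<g'(\beta)$ together with $c_{\alpha\beta}>g(\theta)$ on $(\alpha,\beta)$ are used. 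Since $\varepsilon$ can be taken arbitrarily small (after adjusting $\varrho$), the squeeze $U^-_{\alpha\beta}\leq V\leq U^+$ then gives \eqref{Vf} directly.
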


\begin{remark}
It seems that the conditions $g'(\alpha)<0$ and $g'(\beta)>0$ can not be removed by our methods. These conditions are actually true for homogeneous unbalanced bistable case with the reaction term having positive integration from $0$ to $1$ ($\alpha$ has to be smaller than $\pi/2$ in this case by symmetry and $\beta=\pi-\alpha$), but false for homogeneous balanced bistable case. Moreover, the V-shaped front exists in homogeneous unbalanced bistable case, but does not exist in homogeneous balanced bistable case, see \cite{HM}. Nevertheless, for the balanced case, there exist some fronts whose level sets have an exponential shape for 2-dimensional space and a paraboloidal shape for $N$-dimensional space with $N\ge 3$, see \cite{CGHNR,T3,T4}.
\end{remark}

\begin{remark}
One can easily check that the curved front $V(t,x,y)$ in Theorem~\ref{th1} is a transition front connecting $0$ and $1$ (see \cite{H} for the definition) with sets
$$\Gamma_t:=\{x\le 0, y\in\R; x\cos\alpha+y\sin\alpha-c_{\alpha}t\} \cup\{x> 0, y\in\R;x\cos\beta+y\sin\beta-c_{\beta}t\},$$
$$\Omega_t^{+}:=\{x\le 0, y\in\R; x\cos\alpha+y\sin\alpha-c_{\alpha}t< 0\}\cup \{x> 0, y\in\R; x\cos\beta+y\sin\beta-c_{\beta}t< 0\},$$
and 
$$\Omega_t^{-}:=\{x\le 0, y\in\R; x\cos\alpha+y\sin\alpha-c_{\alpha}t> 0\}\cup \{x> 0, y\in\R; x\cos\beta+y\sin\beta-c_{\beta}t> 0\}.$$
Notice that for any fixed $t$, $\Gamma_t$ is a connected polyline since $c_{\alpha}/\sin\alpha=c_{\beta}/\sin\beta$ and the shape of $\Gamma_t$ is invariant with respect to $t$. Moreover, by the definition of the global mean speed \cite{H}, the curved front $V(t,x,y)$ has a global mean speed equal to $\min\{c_{\alpha},c_{\beta}\}$, in the sense that
$$\frac{d(\Gamma_t,\Gamma_s)}{|t-s|}\rightarrow \min\{c_{\alpha},c_{\beta}\}, \hbox{ as $|t-s|\rightarrow +\infty$}.$$
Here, the distance $d(A,B)$ between two subsets $A$ and $B$ of $\R^2$, is defined by the smallest geodesic distance between pairs of points in $A$ and $B$. Other definition of the distance $\tilde{d}$ like
$$\tilde{d}(A,B)=\min\Big(\sup\{d(x,B); x\in A\}, \sup\{d(y,A);y\in B\}\Big),$$
could be used. Then, there holds that $d(A,B)\le \tilde{d}(A,B)$ and the global mean speed is equal to $\max\{c_{\alpha},c_{\beta}\}$, in the sense that
$$\frac{\tilde{d}(\Gamma_t,\Gamma_s)}{|t-s|}\rightarrow \max\{c_{\alpha},c_{\beta}\}, \hbox{ as $|t-s|\rightarrow +\infty$}.$$
This is different with the homogeneous case, in which the global mean speeds under these two definitions are the same, see \cite{H} and see \cite{GHS} for the underlying domains being exterior domains and domains with multiple branches.
\end{remark}

We then show that the condition \eqref{angle} is not empty, that is, it is satisfied when $\alpha$ close to $0$ and $\beta$ close to $\pi$, see Figure~1.

\begin{figure}\centering
 \includegraphics[scale=0.25]{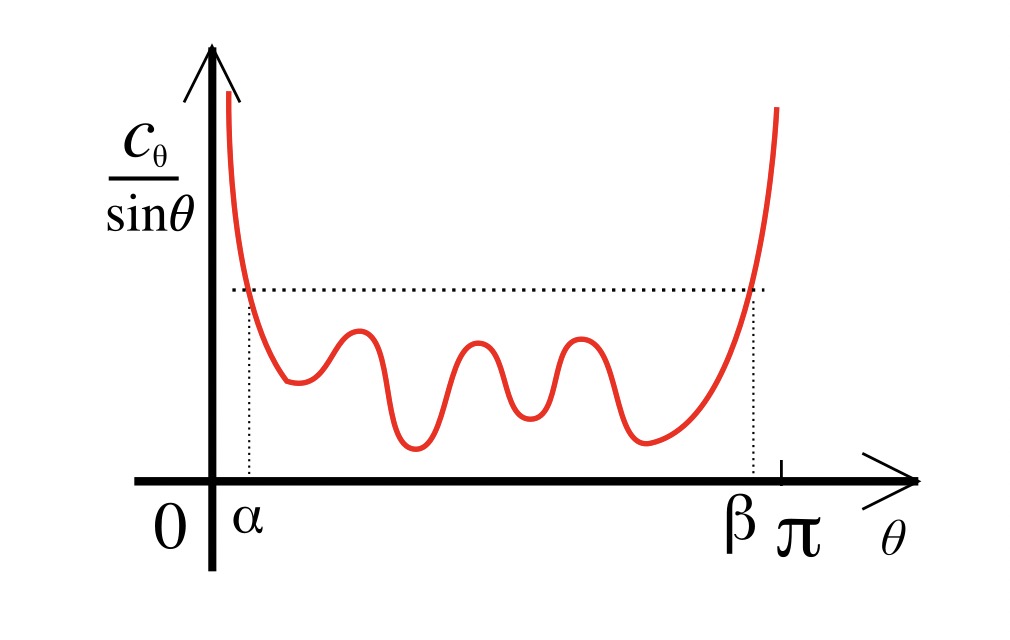}
 \caption{An example of $\alpha$ and $\beta$ satisfying \eqref{angle}.}
\end{figure}

\begin{corollary}\label{cor1}
There exist $0<\alpha_1<\beta_1<\pi$ such that for any $\alpha\in(0,\alpha_1)$, there is $\beta\in (\beta_1,\pi)$ such that \eqref{angle} holds for such $\alpha$, $\beta$ and there exists an entire solution $V(t,x,y)$ of \eqref{eq1.1} satisfying \eqref{Vf}.
\end{corollary}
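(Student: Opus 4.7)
The plan is to exploit the blow-up of $g(\theta)=c_\theta/\sin\theta$ at the two endpoints of $(0,\pi)$ and to locate the desired pair $(\alpha,\beta)$ as a level set of $g$ that lies above the values of $g$ on any intermediate compact subinterval. Since $\{c_e\}_{e\in\mathbb{S}}$ is uniformly bounded above and bounded below by some $c_{\min}>0$, and since (by the uniqueness up to shifts of the pulsating front and the perturbative arguments in \cite{G}) the map $e\mapsto c_e$ is $C^1$, $g$ is $C^1$ on $(0,\pi)$ with $\lim_{\theta\to 0^+}g(\theta)=\lim_{\theta\to\pi^-}g(\theta)=+\infty$. A direct computation of
$$g'(\theta)=\frac{c'_\theta\sin\theta-c_\theta\cos\theta}{\sin^2\theta},$$
using the lower bound $c_\theta\ge c_{\min}$ and a uniform bound on $c'_\theta$, then yields $g'(\theta)\to -\infty$ as $\theta\to 0^+$ and $g'(\theta)\to +\infty$ as $\theta\to\pi^-$. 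Consequently there exist $0<\delta_1<\delta_2<\pi$ such that $g$ is strictly decreasing on $(0,\delta_1]$ and strictly increasing on $[\delta_2,\pi)$.

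I would then set $K:=\max_{\theta\in[\delta_1,\delta_2]} g(\theta)$, pick any $M>K$, and define $\alpha_1\in(0,\delta_1)$ as the unique solution of $g(\theta)=M$ in $(0,\delta_1]$ and $\beta_1\in(\delta_2,\pi)$ as the smallest solution of $g(\theta)=M$ in $[\delta_2,\pi)$, both existing by the intermediate value theorem together with $g\to+\infty$ at the endpoints. For arbitrary $\alpha\in(0,\alpha_1)$, strict monotonicity on $(0,\delta_1]$ gives $g'(\alpha)<0$ and $g(\alpha)>g(\alpha_1)=M$. Applying IVT once more to $g$ on $[\beta_1,\pi)$, where $g(\beta_1)=M<g(\alpha)$ and $g\to+\infty$, I would let
$$\beta:=\inf\{\theta\in[\beta_1,\pi):g(\theta)=g(\alpha)\}\in(\beta_1,\pi).$$
Then $g(\beta)=g(\alpha)=:c_{\alpha\beta}$ and $g'(\beta)>0$ because $\beta\in[\delta_2,\pi)$.

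The remaining condition in \eqref{angle}, namely $g(\theta)<c_{\alpha\beta}$ for every $\theta\in(\alpha,\beta)$, I would check by splitting this interval into four pieces: on $(\alpha,\delta_1]$ strict decrease of $g$ on $(0,\delta_1]$ yields $g(\theta)<g(\alpha)$; on $[\delta_1,\delta_2]$ the definition of $K$ gives $g(\theta)\le K<M<g(\alpha)$; on $[\delta_2,\beta_1]$ strict increase on $[\delta_2,\pi)$ gives $g(\theta)\le g(\beta_1)=M<g(\alpha)$; and on $[\beta_1,\beta)$ the minimality of $\beta$ together with continuity forces $g(\theta)<g(\alpha)$. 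Once $(\alpha,\beta)$ fulfils \eqref{angle}, Theorem~\ref{th1} immediately produces the entire solution $V$ satisfying \eqref{Vf}. The principal technical obstacle in this plan is the $C^1$-regularity of the map $\theta\mapsto c_\theta$ needed for the $g'$-blow-up at the endpoints; I would establish it by differentiating the pulsating-front equation with respect to $e$ and invoking the uniqueness and linearized-stability results of \cite{G} to set up an implicit-function argument around each direction.
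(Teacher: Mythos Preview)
Your proposal is correct and follows essentially the same approach as the paper: compute $g'(\theta)$, use the boundedness of $c'_\theta$ and the positive lower bound on $c_\theta$ to force $g'(\theta)\to-\infty$ as $\theta\to 0^+$ and $g'(\theta)\to+\infty$ as $\theta\to\pi^-$, then use the blow-up of $g$ at both endpoints together with the intermediate value theorem to locate $(\alpha,\beta)$, and finally invoke Theorem~\ref{th1}. The $C^1$-regularity of $e\mapsto c_e$ that you flag as the main technical input is indeed supplied by \cite{G} and is used the same way in the paper.

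Your write-up is in fact more careful than the paper's on one point. The paper simply picks $\alpha_1,\beta_1$ so that $g'<0$ on $(0,\alpha_1)$ and $g'>0$ on $(\beta_1,\pi)$, then asserts ``by continuity'' that for any $\alpha\in(0,\alpha_1)$ one can find $\beta\in(\beta_1,\pi)$ with $g(\alpha)=g(\beta)$ and $g<g(\alpha)$ on $(\alpha,\beta)$; but without further shrinking $\alpha_1$ this need not hold, since $g$ could exceed $g(\alpha)$ somewhere on $[\alpha_1,\beta_1]$. Your introduction of $\delta_1,\delta_2$, the constant $K=\max_{[\delta_1,\delta_2]}g$, and the level $M>K$ defining $\alpha_1,\beta_1$ closes this gap cleanly and makes the four-piece verification of the strict inequality transparent. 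One cosmetic remark: once you know $g$ is strictly increasing on $[\delta_2,\pi)$, the solution $\beta$ of $g(\theta)=g(\alpha)$ in $(\beta_1,\pi)$ is automatically unique, so the ``minimality of $\beta$'' clause in your last step is superfluous---strict monotonicity already gives $g(\theta)<g(\beta)$ on $[\beta_1,\beta)$.
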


Indeed, one can rotate the coordinate such that $y$-axis points to any direction. Therefore,  Corollary~\ref{cor1} implies that for any two pulsating fronts whose propagation directions are close to reversed with each other, one can use them to construct a curved front.

\begin{corollary}\label{cor2}
There exist $0<\rho<1$ such that for any directions $e_1$, $e_2$ with $-1<e_1\cdot e_2<-1+\rho$, there exist a direction $e_0$ such that
\be\label{ce1e2}
\frac{c_{e_1}}{\sqrt{1-(e_1\cdot e_0)^2}}=\frac{c_{e_2}}{\sqrt{1-(e_2\cdot e_0)^2}}:=c_{e_1e_2}
\ee
and there is an entire solution $V(t,x,y)$ of \eqref{eq1.1} satisfying 
\be\label{Ve1e2}
\lim_{R\rightarrow +\infty} \sup_{((x,y)-c_{e_1e_2}t e_0)^2>R^2} \Big| V(t,x,y)-U^-_{e_1e_2}(t,x,y)\Big|=0,
\ee
where
$$U^-_{e_1e_2}(t,x,y):=\max\{U_{e_1}((x,y)\cdot e_1-c_{e_1} t,x,y),U_{e_2}((x,y)\cdot e_2-c_{e_2} t,x,y)\}.$$
\end{corollary}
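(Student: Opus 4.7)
The plan is to deduce Corollary~\ref{cor2} from Corollary~\ref{cor1} by a rotation of the coordinate frame, relying on the fact that the construction in the proof of Theorem~\ref{th1} uses only the intrinsic family of pulsating fronts $\{U_e,c_e\}_{e\in\mathbb{S}}$ and thus admits a direct analog with any direction $e_0$ playing the role of the ``$y$-direction''. Given near-antipodal $e_1,e_2\in\mathbb{S}$, I would first produce $e_0$ satisfying \eqref{ce1e2}. Consider, on an arc of $\mathbb{S}$ joining $e_1$ and $e_2$ and avoiding $\pm e_1,\pm e_2$, the continuous function
$$F(e_0):=\frac{c_{e_1}}{\sqrt{1-(e_1\cdot e_0)^2}}-\frac{c_{e_2}}{\sqrt{1-(e_2\cdot e_0)^2}}.$$
Since $c_e>0$ for every $e\in\mathbb{S}$, the first term of $F$ blows up as $e_0\to e_1$ while the second stays bounded there, and vice versa as $e_0\to e_2$. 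Hence $F$ changes sign on this arc, and the intermediate value theorem yields some $e_0$ with $F(e_0)=0$, which gives \eqref{ce1e2}.

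Next, rotate the spatial coordinates so that in the new frame $e_0$ points in the $y$-direction. Then $e_1=(\cos\alpha,\sin\alpha)$ and $e_2=(\cos\beta,\sin\beta)$ for certain angles $\alpha,\beta\in(0,\pi)$, and the condition \eqref{ce1e2} translates to the matching condition $c_\alpha/\sin\alpha=c_\beta/\sin\beta$ of Theorem~\ref{th1}. Since $e_1\cdot e_2$ is close to $-1$, choosing the orientation of $e_0$ suitably forces $\alpha$ close to $0$ and $\beta$ close to $\pi$; for $\rho$ small enough, this places $\alpha\in(0,\alpha_1)$ and $\beta\in(\beta_1,\pi)$ with $\alpha_1,\beta_1$ as in Corollary~\ref{cor1}. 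Invoking the latter then produces an entire solution $V$ whose V-shaped apex moves in the ``$y$-direction'' of the new frame, i.e.\ in direction $e_0$ in the original frame, at speed $c_{\alpha\beta}=c_{e_1e_2}$; expressing this back in the original coordinates yields \eqref{Ve1e2}.

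The main obstacle is that a rotation of coordinates does not preserve the periodicity of $f$ with respect to the lattice $L\mathbb{Z}^2$, so Corollary~\ref{cor1} cannot be applied literally in the rotated frame. The genuine reduction therefore requires reexamining the proof of Theorem~\ref{th1} and checking that the choice of the $y$-axis there is only notational: the sub- and supersolution constructions built from $U^-_{\alpha\beta}$, the comparison arguments, and the asymptotic analysis all involve only the intrinsic objects $U_e$ and $c_e$ and the V-matching geometry, and so go through verbatim when $(0,1)$ is replaced by an arbitrary direction $e_0$ and the matching relation is phrased as \eqref{ce1e2}. Once this intrinsic reformulation is in place, verifying that the monotonicity conditions analogous to $g'(\alpha)<0$ and $g'(\beta)>0$ in \eqref{angle} persist in the new frame reduces to smallness of $\rho$ together with continuity of $c_e$ and $U_e$ in $e$; this is the most delicate point but follows from the same ingredients that were used to establish Corollary~\ref{cor1}.
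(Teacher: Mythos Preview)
Your proposal is essentially correct and follows the same route as the paper: find $e_0$ by an intermediate-value argument and then reduce to Corollary~\ref{cor1}/Theorem~\ref{th1} after aligning the $y$-axis with $e_0$. The paper carries this out by literally rotating coordinates and asserting that the rotated nonlinearity ``satisfies (F1)--(F3)'', whereas you correctly observe that a rotation destroys the rectangular periodicity in (F2) and instead argue that the construction in Theorem~\ref{th1} is intrinsic in the family $\{U_e,c_e\}$ and hence transfers to any choice of $e_0$; this is a cleaner way to handle exactly the point the paper glosses over.

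One small correction: you cannot simply ``invoke Corollary~\ref{cor1}'', since that result only asserts that \emph{for each} small $\alpha$ \emph{there exists} a matching $\beta$, while here both angles are already prescribed by $e_1,e_2$ and your chosen $e_0$. What you must do (and what the paper in effect does) is verify the full condition~\eqref{angle} directly for your pair $(\alpha,\beta)$: the near-antipodality $e_1\cdot e_2>-1+\rho$ with $\rho$ small forces $\alpha$ close to $0$ and $\beta$ close to $\pi$, so $g(\alpha)=g(\beta)$ exceeds $\max_{[\alpha_1,\beta_1]}g$, and monotonicity of $g$ on $(0,\alpha_1)$ and $(\beta_1,\pi)$ (from the proof of Corollary~\ref{cor1}) then yields both the strict inequality $g(\theta)<g(\alpha)$ on $(\alpha,\beta)$ and the sign conditions $g'(\alpha)<0$, $g'(\beta)>0$. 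With \eqref{angle} in hand you apply Theorem~\ref{th1} itself. The uniformity needed to choose a single $\rho$ independent of $e_1,e_2$ comes from the boundedness of $c_e$ and $c'_e$ over $\mathbb{S}$, exactly as you indicate.
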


By Theorem~\ref{th1}, one knows that \eqref{angle} is a sufficient condition for the existence of $V(t,x,y)$ satisfying \eqref{Vf}. However, we can not show that \eqref{angle} is necessary, but can show that \eqref{angle} without $g'(\alpha)<0$ and $g'(\beta)>0$ is necessary. 

\begin{theorem}\label{th2}
If there are two angles $\alpha$ and $\beta$ of $(0,\pi)$ and a constant $c_{\alpha\beta}>0$ such that there exists an entire solution $V(t,x,y)$ of \eqref{eq1.1} satisfying \eqref{Vf}, then it holds
$$c_{\alpha\beta}=\frac{c_{\alpha}}{\sin\alpha}=\frac{c_{\beta}}{\sin\beta}>\frac{c_{\theta}}{\sin\theta} \hbox{ for any $\theta\in (\alpha,\beta)$}.$$
\end{theorem}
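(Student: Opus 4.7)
I would split the proof into two parts: establishing the equalities $c_{\alpha\beta}=c_\alpha/\sin\alpha=c_\beta/\sin\beta$ first, and then the strict inequality $c_{\alpha\beta}>c_\theta/\sin\theta$ for $\theta\in(\alpha,\beta)$. For the equalities, the plan is to exploit the rigid geometry of the ``V-shape'' of $U^-_{\alpha\beta}$. The two planes $x\cos\alpha+y\sin\alpha=c_\alpha t$ and $x\cos\beta+y\sin\beta=c_\beta t$ intersect along a unique line $t\mapsto(v_1t,v_2t)$, where $(v_1,v_2)$ solves the $2\times 2$ linear system with determinant $\sin(\beta-\alpha)\neq 0$; a direct computation gives $(v_1,v_2)=(0,c_{\alpha\beta})$ iff both equalities hold. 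Suppose by contradiction that they fail. Then $|(v_1t_n,v_2t_n)-(0,c_{\alpha\beta}t_n)|\to\infty$ for any $t_n\to+\infty$, and \eqref{Vf} applies uniformly in bounded neighborhoods of $(v_1t_n,v_2t_n)$. Set $V_n(t,x,y):=V(t+t_n,x+v_1t_n,y+v_2t_n)$. By $L$-periodicity of $f$, extract a subsequence along which $(v_1t_n\bmod L_1,v_2t_n\bmod L_2)\to(r^*,s^*)$; parabolic regularity yields a locally uniform limit $V_\infty$ which is a classical solution of $u_t-\Delta u=f(x+r^*,y+s^*,u)$.

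Using the defining relations $v_1\cos\alpha+v_2\sin\alpha=c_\alpha$ and $v_1\cos\beta+v_2\sin\beta=c_\beta$ of the vertex, the terms of order $t_n$ in the first arguments of $U_\alpha$ and $U_\beta$ cancel, and \eqref{Vf} identifies
\[
V_\infty(t,x,y)=\max\bigl\{U_\alpha(x\cos\alpha+y\sin\alpha-c_\alpha t,x+r^*,y+s^*),\;U_\beta(x\cos\beta+y\sin\beta-c_\beta t,x+r^*,y+s^*)\bigr\}.
\]
This maximum is at best a subsolution: evaluating at sequences tending to infinity in the four half-lines of directions $\pm(\cos\alpha,\sin\alpha)$ and $\pm(\cos\beta,\sin\beta)$, one sees that neither constituent dominates the other everywhere, so the maximum has a genuine kink on the hypersurface where they agree. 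This is incompatible with $V_\infty$ being a classical $C^{2,1}$ solution: for the max to be $C^{2,1}$ the two pulsating fronts would have to match to all orders on their coincidence set, and backward uniqueness for the linear parabolic equation satisfied by their difference would force the two distinct pulsating fronts to coincide, which is impossible since the directions $\alpha,\beta$ differ. Hence $(v_1,v_2)=(0,c_{\alpha\beta})$ and the two equalities follow.

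For the strict inequality, assume for contradiction that $c_\theta/\sin\theta\geq c_{\alpha\beta}$ for some $\theta\in(\alpha,\beta)$. The key geometric input is the positive convex combination
\[
(\cos\theta,\sin\theta)=\lambda_1(\cos\alpha,\sin\alpha)+\lambda_2(\cos\beta,\sin\beta),\quad \lambda_1=\frac{\sin(\beta-\theta)}{\sin(\beta-\alpha)}>0,\ \lambda_2=\frac{\sin(\theta-\alpha)}{\sin(\beta-\alpha)}>0.
\]
I would apply the sliding method using the classical solution $\overline V^\tau(t,x,y):=U_\theta(x\cos\theta+y\sin\theta-c_\theta(t-\tau),x,y)$. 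First, for $\tau$ sufficiently large and positive, I would show $\overline V^\tau(0,\cdot)\leq V(0,\cdot)$ pointwise: on $\{x\cos\theta+y\sin\theta\leq-c_\theta\tau-M\}$ where $\overline V^\tau\approx 1$, the convex-combination identity forces at least one of $x\cos\alpha+y\sin\alpha$ and $x\cos\beta+y\sin\beta$ to be very negative, whence $U^-_{\alpha\beta}\approx 1$ and by \eqref{Vf} also $V\approx 1$; on the complementary region $\overline V^\tau\approx 0\leq V$; the thin transition strip lies far from the origin for $\tau$ large and is handled analogously. The residual $o(1)$ error is absorbed via the Fife-McLeod-type exponential-in-time stability of bistable pulsating fronts. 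Parabolic comparison then propagates $\overline V^\tau\leq V$ to all $t\geq 0$.

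Finally, slide $\tau$ down to its critical value $\tau^*:=\inf\{\tau:\overline V^\tau\leq V\}$, finite because $\overline V^\tau\to 1$ uniformly on compact sets as $\tau\to-\infty$ while $V\not\equiv 1$. At $\tau^*$ there is a touching sequence $(t_n,x_n,y_n)$ with $V-\overline V^{\tau^*}\to 0$. If the touching occurs at a finite point, the strong maximum principle combined with parabolic unique continuation yields $V\equiv\overline V^{\tau^*}$, incompatible with the V-shaped asymptotics of $V$ from \eqref{Vf} versus the planar level sets of $\overline V^{\tau^*}$. If the touching occurs only along an escaping sequence, translate and extract subsequential limits as in Part~(i) to obtain classical solutions $\tilde V\geq \tilde{\overline V}$ of the (shifted) equation with equality at $(0,0,0)$; the strong max principle forces $\tilde V\equiv\tilde{\overline V}$, but \eqref{Vf} identifies $\tilde V$ as a shifted $U_\alpha$, $U_\beta$, or their maximum depending on the escape direction, while $\tilde{\overline V}$ is a shifted $U_\theta$, which are distinct pulsating fronts. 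The principal obstacle is precisely this touching-at-infinity analysis, combined with the passage from the approximate comparison $\overline V^\tau\leq V+o(1)$ to an exact pointwise one, for which I would rely on the stability theory of bistable pulsating fronts.
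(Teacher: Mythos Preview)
Your treatment of the equalities is essentially the paper's argument (follow the moving vertex, extract a limit via periodicity and parabolic compactness, observe that the two-front maximum cannot be a classical solution), compressed into a single step rather than the paper's two.

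For the strict inequality your sliding strategy matches the paper's handling of the borderline case $c_\theta/\sin\theta = c_{\alpha\beta}$, but there is a genuine gap: you do not separate the strict case $c_\theta/\sin\theta > c_{\alpha\beta}$ from the equality case, and your touching-at-infinity analysis breaks in the former. When $c_\theta/\sin\theta > c_{\alpha\beta}$ the $\theta$-front outruns the vertex in the $y$-direction, so any touching sequence is forced to have $t_n\to+\infty$ with $(x_n,y_n)$ staying in a bounded tube around the moving vertex $(0,c_{\alpha\beta}t_n)$; then $x_n^2+(y_n-c_{\alpha\beta}t_n)^2$ does \emph{not} diverge and \eqref{Vf} gives no information about the limit $\tilde V$, so your trichotomy ``$\tilde V$ is a shifted $U_\alpha$, $U_\beta$, or their max'' is unavailable. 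The paper dispatches this case directly: once the comparison $V\ge U_\theta(\cdot+\tau)-\delta e^{-\delta t}$ for $t\ge 0$ is established (this is what your ``absorb the $o(1)$ via stability'' must become---an explicit Fife--McLeod subsolution, not an appeal to an abstract stability theorem), evaluating at $(t_n,0,c_{\alpha\beta}t_n+R)$ with $R$ fixed large and $t_n\to\infty$ gives $V\to 1$ from the comparison but $V\le 1/2$ from \eqref{Vf}, an immediate contradiction with no sliding needed. In the equality case the paper does slide, but over all $t\in\R$ rather than just $t\ge 0$: the time-invariance of the relative geometry when $c_\theta/\sin\theta=c_{\alpha\beta}$ is exactly what makes the transition-layer inclusions between $V$ and $U_\theta$ uniform in $t$, and the final contradiction comes from a linear parabolic estimate propagating the near-touching laterally to a point where a direct geometric lower bound on $V-U_\theta$ is available, rather than from identifying a limit solution via \eqref{Vf}.
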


Now, we show the uniqueness and the stability of the curved front $V(t,x,y)$.

\begin{theorem}\label{th3}
For any fixed $0<\alpha<\beta<\pi$ satisfying 
$$\frac{c_{\alpha}}{\sin\alpha}=\frac{c_{\beta}}{\sin\beta}:=c_{\alpha\beta},$$
the entire solution $V(t,x,y)$ of \eqref{eq1.1} satisfying \eqref{Vf} is unique, that is, if there is an entire solution $V^*(t,x,y)$ satisfying \eqref{Vf}, then $V^*(t,x,y)\equiv V(t,x,y)$.
\end{theorem}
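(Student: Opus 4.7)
The plan is to adapt the sliding-in-time comparison argument that Ninomiya and Taniguchi~\cite{NT1} used in the homogeneous V-shaped case, replacing the planar components by the pulsating fronts $U_\alpha$ and $U_\beta$. Let $V^*$ be any entire solution of \eqref{eq1.1} satisfying \eqref{Vf}; the goal is to show $V\equiv V^*$. I will introduce
$$\tau^* := \inf\bigl\{\tau\in\R : V(t+\tau,x,y)\ge V^*(t,x,y)\text{ for all }(t,x,y)\in\R\times\R^2\bigr\}$$
and prove that $\tau^*\le 0$; interchanging the roles of $V$ and $V^*$ gives $\tau^*\ge 0$, so $\tau^*=0$ and $V\ge V^*$, and running the same argument with the two solutions swapped yields $V^*\ge V$, hence $V\equiv V^*$.

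Two preliminary facts must be recorded. First, $V\ge U^-_{\alpha\beta}$ and $V^*\ge U^-_{\alpha\beta}$ everywhere: indeed $U^-_{\alpha\beta}$ is a subsolution, the difference tends to $0$ at spatial infinity by \eqref{Vf}, and a negative infimum in the tail regions where values lie in $[0,\sigma]$ or $[1-\sigma,1]$ is precluded by the bistable estimate \eqref{lambda}, while in the intermediate region one applies the strong parabolic maximum principle. Second, $\tau^*$ is finite: outside a large ball $B_R(0,c_{\alpha\beta}t)$ both $V(t+\tau,\cdot)$ and $V^*(t,\cdot)$ are close to the respective $U^-_{\alpha\beta}$, and the strict monotonicity $\partial_\xi U_\alpha,\partial_\xi U_\beta<0$ with $c_\alpha,c_\beta>0$ gives $U^-_{\alpha\beta}(t+\tau,x,y)-U^-_{\alpha\beta}(t,x,y)$ positive on the intermediate level sets; inside the ball, $V_t>0$ together with the limits of pulsating fronts forces $V(t+\tau,\cdot)\to 1$ as $\tau\to+\infty$, so $V(t+\tau,\cdot)\ge V^*(t,\cdot)$ eventually.

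The heart of the argument is to show $\tau^*\le 0$. Suppose for contradiction that $\tau^*>0$ and set $w(t,x,y):=V(t+\tau^*,x,y)-V^*(t,x,y)\ge 0$. The decisive quantitative step is to prove
$$\inf_{(t,x,y)\in\R\times\R^2} w(t,x,y) > 0.$$
Outside $B_R(0,c_{\alpha\beta}t)$, $w$ is well-approximated by $U^-_{\alpha\beta}(t+\tau^*,\cdot)-U^-_{\alpha\beta}(t,\cdot)$, which is bounded below by a positive constant uniformly on the set $\{U^-_{\alpha\beta}\in[\sigma,1-\sigma]\}$; on the tails where $U^-_{\alpha\beta}\in[0,\sigma]\cup[1-\sigma,1]$, the linearized equation satisfied by $w$ has zero-order coefficient $-f_u\ge\lambda>0$ by \eqref{lambda}, so an exponentially decaying barrier rules out $w\to 0$ there. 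Inside the ball one uses compactness and the strong parabolic maximum principle applied to the linear parabolic inequality satisfied by $w$. Once $\inf w\ge 2\eta>0$ is secured, the continuity of $V$ in $t$ and of $U^-_{\alpha\beta}$ near $\tau^*$ allows me to replace $\tau^*$ by $\tau^*-\epsilon$ while keeping $V(t+\tau^*-\epsilon,\cdot)\ge V^*(t,\cdot)$, contradicting the defining infimum.

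The step I expect to be the main obstacle is the proof of $\inf w>0$ in the spatially periodic setting. The infimum may be realized along a sequence $(t_n,x_n,y_n)$ escaping to infinity, and unlike the homogeneous case one cannot translate in space freely without altering the coefficients. The remedy is to decompose $(x_n,y_n)=(k_n^1 L_1,k_n^2 L_2)+(\tilde x_n,\tilde y_n)$ with $(\tilde x_n,\tilde y_n)\in\mathbb{T}^2$, extract a convergent subsequence of $(\tilde x_n,\tilde y_n)$, and translate by the lattice vectors, which leaves $f$ invariant by (F2). Passing to a subsequential limit produces entire solutions of \eqref{eq1.1} realizing $w\ge 0$ with $\inf w=0$ attained; the strong maximum principle and parabolic Harnack inequality then force $w\equiv 0$ in the limit, and comparing the asymptotics through \eqref{Vf} with the strict time-monotonicity of the limiting pulsating fronts contradicts $\tau^*>0$.
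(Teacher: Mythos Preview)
Your overall strategy—sliding in time and analyzing the extremal shift $\tau^*$—matches the paper's proof. However, the claim you call the ``decisive quantitative step,'' namely $\inf_{\R\times\R^2} w>0$, is false and cannot be the target. Far into the region where both $V(t+\tau^*,\cdot)$ and $V^*(t,\cdot)$ are close to $0$ (respectively close to $1$), their difference $w$ necessarily tends to $0$; your sentence about an ``exponentially decaying barrier ruling out $w\to 0$'' in the tails is simply wrong—$w$ does go to $0$ there, and no barrier can prevent that.

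The correct organization, and what the paper does, is to split space--time into a middle slab where both solutions lie in $[\sigma',1-\sigma']$ and two tail regions where both lie in $[0,\sigma]$ or in $[1-\sigma,1]$. One asks only whether the infimum of $w$ over the \emph{middle} slab is positive. If it is, decreasing $\tau^*$ slightly preserves $w\ge 0$ on the middle slab; the inequality in the tails then follows not from a positive lower bound on $w$ but from a comparison argument using that $f(x,y,\cdot)$ is nonincreasing on $(-\infty,\sigma]$ and on $[1-\sigma,+\infty)$ (this is the role of \eqref{lambda}, and the paper invokes \cite[Lemma~4.2]{BH2} here), yielding $w\ge 0$ globally and contradicting the definition of $\tau^*$. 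If the middle infimum is zero, one extracts a sequence in the middle slab and passes to a limit using the lattice periodicity—exactly your final paragraph, which is correct and is how the paper closes the argument (referring to Step~3 of the proof of Lemma~\ref{U+}). So your plan is sound in outline; the fix is to restrict the positivity analysis to the intermediate region and to replace the tail ``barrier'' claim by the monotonicity-based comparison in the tails.
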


\begin{theorem}\label{th4}
Let $\alpha$ and $\beta$ be fixed angles satisfying \eqref{angle} and $V(t,x,y)$ be the entire solution  of \eqref{eq1.1} satisfying \eqref{Vf}.
Let $0\le u_0(x,y)\le 1$ be an initial value satisfying
\be\label{eq-u0}
\lim_{R\rightarrow +\infty} \sup_{x^2+y^2>R^2} \Big| u_0(x,y)-U_{\alpha\beta}^-(0,x,y)\Big|=0.
\ee
Then, the solution $u(t,x,y)$ of \eqref{eq1.1} for $t>0$ with $u(0,x,y)=u_0(x,y)$ satisfies
$$\lim_{t\rightarrow +\infty} \left\|u(t,x,y)-V(t,x,y)\right\|_{L^{\infty}(\R^2)}=0.$$ 
\end{theorem}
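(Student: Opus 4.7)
The plan is to trap $u(t,\cdot)$ between Fife-McLeod-type super- and sub-solutions constructed from time shifts of $V$, invoke the parabolic comparison principle, and then identify every subsequential limit $\lim_n u(t+t_n,\cdot)$ with $V$ via the uniqueness Theorem~\ref{th3}.

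First I would use the decay assumption \eqref{eq-u0} together with \eqref{Vf} to control the initial datum. For any fixed $\epsilon>0$ there is $R_\epsilon>0$ with
$$|u_0(x,y)-V(0,x,y)|\le 2\epsilon\quad\text{for }x^2+y^2>R_\epsilon^2.$$
Since $V_t>0$ on $\R\times\R^2$ and the level sets of $V$ move to spatial infinity as $|t|\to\infty$ (so $V(T,\cdot)\to 1$ and $V(-T,\cdot)\to 0$ uniformly on $B_{R_\epsilon}$ as $T\to+\infty$), I can pick $T_\epsilon>0$ large enough that
$$V(-T_\epsilon,x,y)-2\epsilon\le u_0(x,y)\le V(T_\epsilon,x,y)+2\epsilon\quad\text{on }\R^2.$$

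Next I would define the Fife-McLeod barriers
$$\bar w(t,x,y)=V\bigl(t+T_\epsilon+\xi(t),x,y\bigr)+q(t),\quad\underline w(t,x,y)=V\bigl(t-T_\epsilon-\xi(t),x,y\bigr)-q(t),$$
with $q(t)=2\epsilon e^{-\mu t}$ and $\xi(t)=(A/\mu)(1-e^{-\mu t})\cdot 2\epsilon$. For $\mu\in(0,\lambda)$ small and $A$ large (depending on the Lipschitz bounds for $f_u, f_{uu}$ and a uniform positive lower bound $\delta_0>0$ of $V_t$ on the transition set $\{\sigma\le V\le 1-\sigma\}$), the bistable estimate \eqref{lambda} handles the regions $\{V\le\sigma\}$ and $\{V\ge 1-\sigma\}$, while the term $A q(t)V_t$ absorbs the nonlinear error on the transition set. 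The lower bound $V_t\ge\delta_0$ on the transition set follows from the strong parabolic maximum principle applied to the positive bounded solution $V_t$ of $(V_t)_t-\Delta V_t=f_u(x,y,V)\,V_t$. The initial sandwich yields $\underline w(0,\cdot)\le u_0\le\bar w(0,\cdot)$, so by comparison this persists for all $t>0$.

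Finally I would pass to the limit. Since $q(t)\to 0$ and $\xi(t)\to\xi_\infty=2\epsilon A/\mu$, for any sequence $t_n\to+\infty$ parabolic regularity extracts a subsequential limit $V^*(t,x,y)=\lim_n u(t+t_n,x,y)$ (locally uniformly), an entire solution of \eqref{eq1.1} satisfying
$$V(t-T_\epsilon-\xi_\infty,x,y)\le V^*(t,x,y)\le V(t+T_\epsilon+\xi_\infty,x,y).$$
Because $V$ satisfies \eqref{Vf} and $U^-_{\alpha\beta}(t+s,\cdot)-U^-_{\alpha\beta}(t,\cdot)\to 0$ uniformly on $\{x^2+(y-c_{\alpha\beta}t)^2>R^2\}$ as $R\to\infty$ for each bounded $s$ (a consequence of $\partial_\xi U_e<0$ together with the limits $U_e(\pm\infty,x,y)\in\{0,1\}$), both bounding functions satisfy \eqref{Vf}, hence so does $V^*$. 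Theorem~\ref{th3} then forces $V^*=V$, so the full sequence converges and $\|u(t,\cdot)-V(t,\cdot)\|_{L^\infty(\R^2)}\to 0$.

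The main obstacle I expect is verifying the super-/subsolution property in the transition region $\{\sigma\le V\le 1-\sigma\}$: the bistable estimate \eqref{lambda} is unavailable there, so the parabolic inequality has to be closed using a uniform positive lower bound on $V_t$ along the (unbounded) moving interface $\Gamma_t$ of $V$. Establishing this lower bound uniformly in $(x,y)$ requires the exponential decay of the pulsating fronts $U_\alpha,U_\beta$ away from their interfaces together with the V-shape structure of $V$ inherited from the construction in Theorem~\ref{th1}.
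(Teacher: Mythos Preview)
Your approach has a genuine gap at the last step. The sandwich you obtain,
$$V(t-T_\epsilon-\xi_\infty,x,y)\le u(t,x,y)\le V(t+T_\epsilon+\xi_\infty,x,y),$$
involves a shift $T_\epsilon$ that is \emph{large}: as $\epsilon\to 0$ the radius $R_\epsilon$ blows up, and so does $T_\epsilon$. Hence the width of the sandwich, of order $(T_\epsilon+\xi_\infty)\sup V_t$, does not tend to zero, and the sandwich alone does not force $u\to V$ in $L^\infty$. Your attempted repair via a subsequential limit $V^*$ and Theorem~\ref{th3} fails because the key claim that $V(t\pm(T_\epsilon+\xi_\infty),\cdot)$ satisfies \eqref{Vf} is false. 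Concretely, along the asymptotic leg $\{x\cos\alpha+y\sin\alpha-c_\alpha t=O(1)\}$, which lies arbitrarily far from the vertex $(0,c_{\alpha\beta}t)$, one has
$$U^-_{\alpha\beta}(t+s,x,y)-U^-_{\alpha\beta}(t,x,y)=U_\alpha(\xi-c_\alpha s,x,y)-U_\alpha(\xi,x,y)\sim -c_\alpha s\,\partial_\xi U_\alpha,$$
which is bounded away from zero for any fixed $s\neq 0$. Thus time-shifted copies of $V$ do \emph{not} satisfy \eqref{Vf}, and uniqueness cannot be invoked. (There is also the secondary issue that without a spatial shift the locally uniform limit $\lim_n u(t+t_n,\cdot)$ is identically $1$, not a front.)

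The paper circumvents this by a two-stage argument. It does \emph{not} use $V$ as the barrier at first; instead it builds auxiliary super- and subsolutions $U^+$ and $U_1^-,U_2^-$ (Lemmas~\ref{U+},~\ref{U1-},~\ref{U2-}) that carry an extra scale parameter $\varrho$, decoupled from the accuracy parameter $\varepsilon$. By sending $\varrho\to 0$ one spreads the barriers so that they dominate $u_0$ on the ball $B_{R_\delta}$ \emph{without any large time shift}, while keeping the far-field error $\le 2\varepsilon$. Crucially, these barriers satisfy the exact discrete periodicity $U^\pm(t+L_2 n/c_{\alpha\beta},x,y+L_2 n)=U^\pm(t,x,y)$, so after shifting along this lattice and passing to the limit one gets a solution $u_\infty$ sandwiched between functions that are $\varepsilon$-close to $U^-_{\alpha\beta}$; since $\varepsilon$ is arbitrary, $u_\infty$ satisfies \eqref{Vf} and hence equals $V$ by Theorem~\ref{th3}. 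This, combined with the far-field control, yields $\|u(t_0,\cdot)-V(t_0,\cdot)\|_{L^\infty}\le\varepsilon$ at some finite $t_0$. Only then does the paper apply your Fife-McLeod construction to $V$, now with a \emph{small} shift $\omega\varepsilon$, closing the argument.
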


Next, we construct a transition front connecting $0$ and $1$ with varying interfaces. Such a kind of transition front is known in homogeneous case by \cite{H}, in which  the solution is orthogonal symmetric with respect to $y$-axis and behaves as three planar fronts as $t\rightarrow -\infty$. However, in our case, this transition front can not be symmetric in general.

\begin{theorem}\label{th5}
Let $\alpha$ and $\beta$ be fixed angles satisfying \eqref{angle} and let  $V_{\alpha\beta}(t,x,y)$ be the entire solution of \eqref{eq1.1} satisfying \eqref{Vf}. Denote $e_\alpha=(\cos\alpha,\sin\alpha)$ and $e_\beta=(\cos\beta,\sin\beta)$. Assume that there exist another angle $\theta\in(\alpha,\beta)$ and a direction $e_{\theta}=(\cos\theta,\sin\theta)$ such that 
\begin{itemize}
\item[(i)] for $e_{\alpha}$ and $e_{\theta}$, there is a direction $e_{\alpha\theta}$ such that \eqref{ce1e2} holds for $e_1=e_{\alpha}$, $e_2=e_{\theta}$ and $e_0=e_{\alpha\theta}$, it holds $h'(\alpha)<0$ where $h(s)=c_{s}/(e_{\alpha\theta}\cdot (\cos s,\sin s))$ for $0<s<\theta$ and there is an entire solution $V_{\alpha\theta}(t,x,y)$ satisfying \eqref{Ve1e2}.

\item[(ii)] for $e_{\beta}$ and $e_{\theta}$, there is a direction $e_{\beta\theta}$ such that \eqref{ce1e2} holds for $e_1=e_{\beta}$, $e_2=e_{\theta}$ and $e_0=e_{\beta\theta}$, it holds $h'(\beta)>0$ where $h(s)=c_{s}/(e_{\beta\theta}\cdot (\cos s,\sin s))$ for $\theta<s<\pi$ and $e_0=e_{\alpha\theta}$ and there is an entire solution $V_{\beta\theta}(t,x,y)$ satisfying \eqref{Ve1e2}.
\end{itemize}
Then, there exists an entire solution $u(t,x,y)$ of \eqref{eq1.1} such that
\begin{eqnarray}\label{th1.8-1}
u(t,x,y)\rightarrow \left\{\begin{aligned}
V_{\alpha\theta}(t,x,y), &\hbox{ uniformly in the half plane $\{(x,y)\in\R^2; x<0\}$},\\
V_{\beta\theta}(t,x,y)\}, &\hbox{ uniformly in the half plane $\{(x,y)\in\R^2; x>0\}$},
\end{aligned}
\right.
\quad\hbox{as $t\rightarrow -\infty$}.
\end{eqnarray}
and
\be\label{th1.8-2}
u(t,x)\rightarrow V_{\alpha\beta}(t,x,y), \hbox{ as $t\rightarrow +\infty$ uniformly in $\R^2$}.
\ee
\end{theorem}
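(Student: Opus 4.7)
The plan is to build $u$ as a monotone limit of Cauchy problems started at more and more negative times, with initial datum a subsolution formed from $V_{\alpha\theta}$ and $V_{\beta\theta}$. Since each of these curved fronts is unique only up to time shifts (Theorem~\ref{th3}), one can choose $s_1,s_2\in\R$ so that the $e_\theta$-arms of $V_{\alpha\theta}(\cdot+s_1,\cdot,\cdot)$ and $V_{\beta\theta}(\cdot+s_2,\cdot,\cdot)$ are carried by a single common $e_\theta$-pulsating front profile $U_{e_\theta}$; this geometrically matches the two V-interfaces along their shared direction. Set
\be
\underline{u}(t,x,y):=\max\bigl\{V_{\alpha\theta}(t+s_1,x,y),\ V_{\beta\theta}(t+s_2,x,y)\bigr\},
\ee
which, as a pointwise maximum of two classical solutions, is a generalized subsolution of \eqref{eq1.1}. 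Its approximate interface is a polyline with three pulsating-front faces, in directions $e_\alpha$, $e_\theta$, $e_\beta$, joined at two corners travelling along $e_{\alpha\theta}$ and $e_{\beta\theta}$ at the respective speeds dictated by \eqref{ce1e2}.

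For each $n\in\N$, let $u_n$ be the solution of \eqref{eq1.1} on $[-n,+\infty)\times\R^2$ with $u_n(-n,\cdot)=\underline{u}(-n,\cdot)$. The subsolution property gives $u_n(t,\cdot)\ge\underline{u}(t,\cdot)$ for $t\ge -n$; comparing at $t=-n$ then forces $u_{n+1}\ge u_n$, so $u_n\nearrow u$ in $C^{1,2}_{\text{loc}}$ to an entire solution $u$ of \eqref{eq1.1} satisfying $\underline{u}\le u\le 1$. In particular, the required lower bounds in \eqref{th1.8-1} are already in hand.

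The two asymptotics come from separate comparisons. For $t\to+\infty$, the hypothesis $c_{\alpha\beta}>c_{e_\theta}/\sin\theta$ from \eqref{angle} (together with (i)--(ii)) forces the middle $e_\theta$-arm to be overtaken by the outer V formed by the $e_\alpha$- and $e_\beta$-arms. I would produce a single shift $s_0$ for which $\underline{u}(t,\cdot)\le V_{\alpha\beta}(t+s_0,\cdot)$ holds for all $t$, deduce $u(t,\cdot)\le V_{\alpha\beta}(t+s_0,\cdot)$ by parabolic comparison, and invoke the stability result Theorem~\ref{th4} for $V_{\alpha\beta}$ to conclude \eqref{th1.8-2}. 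For $t\to-\infty$, the two V-vertices recede along $-e_{\alpha\theta}$ and $-e_{\beta\theta}$, so on $\{x<0\}$ the $V_{\beta\theta}$-contribution to $\underline{u}$ becomes negligible compared to $V_{\alpha\theta}$; a matching pointwise upper bound together with the uniqueness statement Theorem~\ref{th3} then gives $u(t,\cdot)\to V_{\alpha\theta}(t+s_1,\cdot)$ uniformly on $\{x<0\}$, and the symmetric argument on $\{x>0\}$ completes \eqref{th1.8-1}.

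The main obstacle will be the $t\to-\infty$ analysis. The lower bound is free, but the matching upper bound on each half-plane demands quantitative control of $u$ in the overlap strip near the middle $e_\theta$-arm, where the two V-fronts are simultaneously influential. The non-degeneracy conditions $h'(\alpha)<0$ and $h'(\beta)>0$ in (i)--(ii) are exactly what guarantees that the vertex separation between $V_{\alpha\theta}$ and $V_{\beta\theta}$ grows linearly in $|t|$; combined with the exponential decay of $U_e$ near the stable states supplied by \eqref{lambda}, this should suffice to turn the geometric separation into a uniform error estimate, though the bookkeeping to assemble these pieces into the claimed convergence is where most of the real work lies.
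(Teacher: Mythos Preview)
Your overall architecture---monotone limit of Cauchy problems started from a subsolution, stability (Theorem~\ref{th4}) for the $t\to+\infty$ limit, and uniqueness (Theorem~\ref{th3}) for identifying the $t\to-\infty$ limit---matches the paper's. Your subsolution $\max\{V_{\alpha\theta},V_{\beta\theta}\}$ differs from the paper's choice $U^-_{\alpha\theta\beta}=\max\{U_\alpha,U_\theta,U_\beta\}$, but either works for the lower bound and for seeding the monotone scheme.

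The genuine gap is the upper barrier for $t\to-\infty$. You write that ``a matching pointwise upper bound'' is needed and hope exponential decay plus vertex separation will suffice, but you do not indicate any mechanism for producing one. The paper devotes its main effort to this: it constructs an explicit supersolution $\widetilde U^+(t,x,y)$ (Lemma~4.2) built by gluing two moving supersolution profiles along the $e_\theta$-arm, each of the form $U_{e(t,x)}(\xi_i(t,x,y),x,y)+\varepsilon\,\mathrm{sech}$ terms, where the curves $\psi_1,\psi_2$ are \emph{time-dependent} and carry the two vertices at speeds $(c_1,c_2)=c_{\alpha\theta}e_{\alpha\theta}$ and $(\hat c_1,\hat c_2)=c_{\beta\theta}e_{\beta\theta}$. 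Verifying that $\widetilde U^+$ is a supersolution hinges on an inequality (Claim~4) of the form
\[
c_1(\xi_1)_X+c_2(\xi_1)_Y-(\xi_1)_t-c_{e(t,X)}\ \ge\ C\bigl(\mathrm{sech}(\varrho X)+\mathrm{sech}(\varrho X+\varrho(c_1-\hat c_1)t)\bigr)-O(\varrho),
\]
and it is precisely here, not in the vertex-separation rate, that the hypotheses $h'(\alpha)<0$ and $h'(\beta)>0$ are consumed: they produce the strictly positive drift $c_{\alpha\theta}e_{\alpha\theta}\cdot(\cos\theta(t,X),\sin\theta(t,X))-c_{\theta(t,X)}>0$ with the correct $\mathrm{sech}$-decay near the asymptotic directions. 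The linear growth of vertex separation follows instead from $c_1>0>\hat c_1$, which is a consequence of $c_{\alpha\beta}>c_\theta/\sin\theta$ in \eqref{angle} (Lemma~4.1).

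Once $\widetilde U^+$ is available, the sandwich $U^-_{\alpha\theta\beta}\le u\le\widetilde U^+$ for $t\le T$ gives the asymptotics \eqref{lemma4.2-1}--\eqref{lemma4.2-2}; the paper then runs a shifted-sequence compactness argument in moving frames $(t+t_n,x+c_1t_n,y+c_2t_n)$ and invokes uniqueness to identify the limit with $V_{\alpha\theta}$ on $\{x<0\}$ (and symmetrically on $\{x>0\}$). For $t\to+\infty$, the paper does not compare $u$ directly with a time-shift of $V_{\alpha\beta}$; instead it shows that at the fixed time $T$ the sandwich forces $u(T,\cdot)$ to satisfy \eqref{eq-u0}, and then applies Theorem~\ref{th4}. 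Your proposed global comparison $\underline u\le V_{\alpha\beta}(\cdot+s_0)$ is plausible but not established, and in any case the paper's route via $\widetilde U^+$ is what actually closes the argument.
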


The convergence in above theorem is in the sense of $L^{\infty}$ norm.

\begin{remark}
From the proof of Theorem~\ref{th5}, one can easily check that the entire solution $u(t,x,y)$ is a transition front connecting $0$ and $1$ with the interfaces
\begin{align*}
\Gamma_t:=&\Big\{x\le \frac{c_{\alpha}\sin\theta -c_{\theta}\sin\alpha }{\sin(\theta-\alpha)}t, y\in\R; x\cos\alpha+y\sin\alpha-c_{\alpha}t=0\Big\} \cup\Big\{\frac{c_{\alpha}\sin\theta -c_{\theta}\sin\alpha }{\sin(\theta-\alpha)}t<x\\
&\le \frac{c_{\beta}\sin\theta -c_{\theta}\sin\beta }{\sin(\theta-\beta)}t, y\in\R;x\cos\theta+y\sin\theta-c_{\theta}t=0\Big\}\cup\Big\{x> \frac{c_{\beta}\sin\theta -c_{\theta}\sin\beta }{\sin(\theta-\beta)}t, y\in\R;\\
&x\cos\beta+y\sin\beta-c_{\beta}t=0\Big\}, \quad \hbox{ for $t\le 0$},
\end{align*}
and
$$\Gamma_t:=\{x\le 0, y\in\R; x\cos\alpha+y\sin\alpha-c_{\alpha}t=0\} \cup\{x> 0, y\in\R;x\cos\beta+y\sin\beta-c_{\beta}t=0\},  \hbox{ for $t >0$},$$
see Figure~2.
\end{remark}

\begin{figure}\centering
 \includegraphics[scale=0.2]{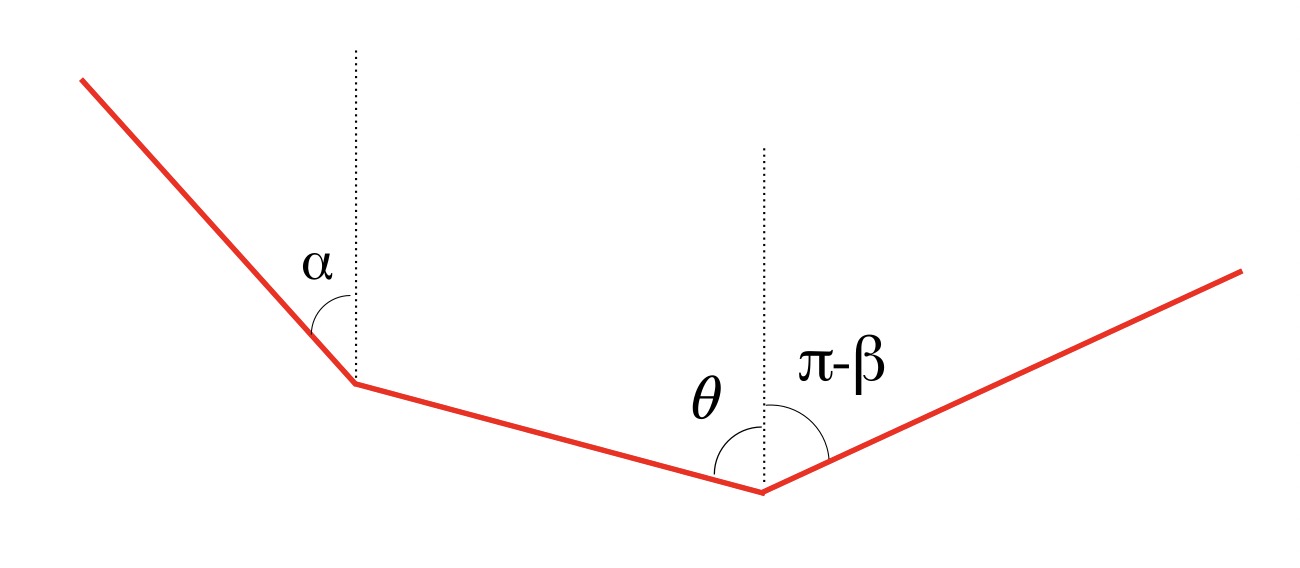}\includegraphics[scale=0.2]{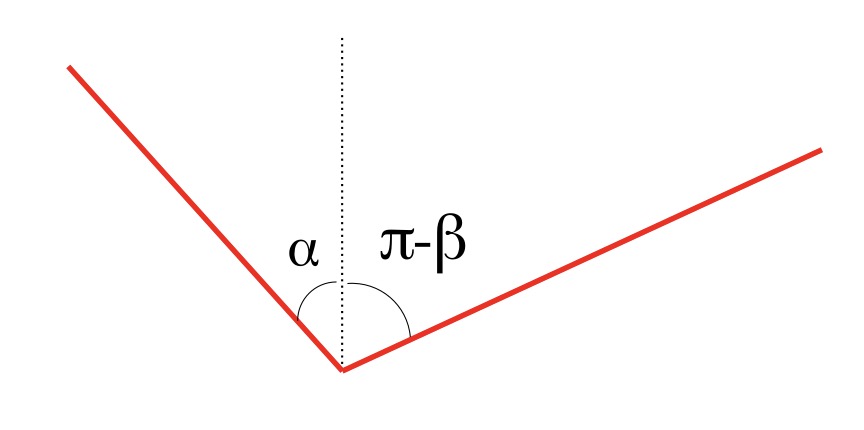}
 \caption{Left: interface when $t<<-1$; Right: interface when $t>>1$.}
\end{figure}

Finally, we give an example showing that Theorem~\ref{th5} is not empty.

\begin{corollary}\label{cor3}
Assume that $e_*$ is the direction such that the family of speeds $\{c_{e}\}_{e\in\mathbb{S}}$ reaches its minimal, that is, $c_{e_*}=\min_{e\in\mathbb{S}}\{c_{e}\}$. Then, there exist $e_1$ and $e_2$ close to $e_*$ such that \eqref{ce1e2} holds for $e_0=e_*$ and there is an entire solution $V_{e_1e_2}(t,x,y)$ of \eqref{eq1.1} satisfying \eqref{Ve1e2}. Moreover, there exist a direction $e_3$ close to $-e_*$ and a direction $e_{**}$ such that there is an entire solution $u(t,x,y)$ of \eqref{eq1.1} such that
\begin{eqnarray*}
u(t,x,y)\rightarrow \left\{\begin{array}{lll}
V_{e_1e_2}(t,x,y), &&\hbox{ uniformly in the half plane $\{(x,y)\in\R^2; (x,y)\cdot e_{**}<0\}$},\\
V_{e_2e_3}(t,x,y)\}, &&\hbox{ uniformly in the half plane $\{(x,y)\in\R^2; (x,y)\cdot e_{**}>0\}$},
\end{array}
\right.
\end{eqnarray*}
as $t\rightarrow -\infty$ 
and
$$u(t,x)\rightarrow V_{e_1e_3}(t,x,y), \hbox{ as $t\rightarrow +\infty$ uniformly in $\R^2$}.$$
\end{corollary}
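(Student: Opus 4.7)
The strategy is to instantiate Theorem~\ref{th5} by identifying $V_{\alpha\beta}=V_{e_1e_3}$, $V_{\alpha\theta}=V_{e_1e_2}$, $V_{\beta\theta}=V_{e_2e_3}$, so the corollary splits into (a) constructing $V_{e_1e_2}$ around $e_*$, (b) producing $V_{e_1e_3}$ and $V_{e_2e_3}$ using nearly-opposite directions, and (c) checking the hypotheses of Theorem~\ref{th5} to glue them.

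For (a), rotate coordinates so that $e_*=(0,1)$. Because $c_\theta\ge c_{e_*}$ for every $\theta$ and $\sin\theta\le 1$ with equality only at $\pi/2$, the function $g(\theta)=c_\theta/\sin\theta$ satisfies $g(\theta)\ge c_{e_*}=g(\pi/2)$ in a neighborhood of $\pi/2$ with equality only at $\pi/2$. Under the generic $C^2$-regularity of $\theta\mapsto c_\theta$ at its minimum one has $c'(\pi/2)=0$ and, by a short Taylor computation, $g''(\pi/2)=c_{e_*}+c''(\pi/2)>0$. The implicit function theorem then yields, for every $\alpha$ slightly below $\pi/2$, a unique $\beta$ slightly above $\pi/2$ such that $g(\alpha)=g(\beta)$, $g'(\alpha)<0<g'(\beta)$, and $g(\theta)<g(\alpha)$ on $(\alpha,\beta)$. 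Thus \eqref{angle} holds with $e_0=e_*$, Theorem~\ref{th1} produces $V_{e_1e_2}$ with $e_1=(\cos\alpha,\sin\alpha)$ and $e_2=(\cos\beta,\sin\beta)$ close to $e_*$, and this is exactly the first conclusion of the corollary once one reinterprets the output as \eqref{Ve1e2} with $e_0=e_*$.

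For (b), choose $e_3$ close to $-e_*$. By continuity both $e_1\cdot e_3$ and $e_2\cdot e_3$ lie in the interval $(-1,-1+\rho)$ supplied by Corollary~\ref{cor2}, delivering $V_{e_1e_3}$ and $V_{e_2e_3}$ with respective directions of motion $e_0^{(13)}$ and $e_0^{(23)}$, each close to a direction perpendicular to $e_*$. For (c), pass to the frame where $e_0^{(13)}=(0,1)$ and define $e_{**}$ to be the image of $(1,0)$ under the inverse rotation. Since $e_1\cdot e_3$ is near $-1$, the angles of $e_1,e_3$ in this frame are near $0$ and $\pi$ respectively (as in Corollary~\ref{cor1}); the angle of $e_2$, being close to that of $e_*\approx e_1$, falls strictly between them, so (swapping the labels of $e_1,e_2$ if necessary) we obtain $\alpha<\theta<\beta$ in $(0,\pi)$ compatible with Theorem~\ref{th5}. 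The derivative hypotheses (i) and (ii) are free: under the rotation from this frame back to the Step~1 frame the function $h(s)=c_s/(e_*\cdot(\cos s,\sin s))$ becomes exactly the $g$ of step (a), so $h'(\alpha)=g'(\alpha_{V_{12}})<0$ is already known; similarly, $h(s)=c_s/(e_0^{(23)}\cdot(\cos s,\sin s))$ coincides after the analogous rotation with the $g$-function naturally attached to $V_{e_2e_3}$, which satisfies the $g'>0$ inequality at the angle of $e_3$ by virtue of Corollary~\ref{cor2} itself (which implicitly uses Theorem~\ref{th1} in its frame). Theorem~\ref{th5} then delivers the entire solution $u(t,x,y)$ with the stated asymptotics.

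The hardest part is not any single estimate but the coordinate bookkeeping in step (c): aligning the Step~1 frame (where $e_*$ is vertical), the frame of $V_{e_1e_3}$ (where its direction of motion is vertical), and the frame of $V_{e_2e_3}$ so that all three angle orderings land in $(0,\pi)$ and the direction-of-motion perpendicularity $e_0^{(13)}\perp e_*$ (and near-perpendicularity $e_0^{(23)}\perp e_*$) holds tightly enough to keep the angle of $e_*$ slightly positive in the $V_{e_1e_3}$-frame. A clean way to manage this is to first prove a rotation-equivariant restatement of Theorem~\ref{th5} in which the fixed coordinate convention $\alpha,\beta\in(0,\pi)$ is replaced by an abstract direction-of-motion; with that in hand, the verification reduces to the two sign conditions above, which transport trivially between frames.
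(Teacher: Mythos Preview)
Your proposal is correct and follows essentially the same route as the paper: build $V_{e_1e_2}$ around the minimizing direction $e_*$ via Theorem~\ref{th1}, use Corollaries~\ref{cor1}--\ref{cor2} to obtain $V_{e_1e_3}$ and $V_{e_2e_3}$ from nearly-opposite directions, and then invoke Theorem~\ref{th5}. The only organizational difference is that the paper fixes a concrete frame at the outset---rotating so that $e_*$ has a small angle $\theta_*\in(0,\pi/2)$---and then applies Corollary~\ref{cor1} and Theorem~\ref{th5} directly in that frame, which sidesteps the rotation bookkeeping you flag in step~(c); your $g''(\pi/2)=c_{e_*}+c''(\pi/2)>0$ computation in step~(a) is in fact a cleaner variant of the paper's monotonicity argument there.
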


We organize this paper as following. In Section~2, we first prove the existence of the curved front, that is, Theorem~\ref{th1}. Then, we give some examples showing that Theorem~\ref{th1} is not empty. We also show a necessary condition for the existence of the curved front in this section. Section~3 is devoted to the proof of the uniqueness and stability of the curved front in Theorem~\ref{th1}. In Section~4, we construct a curved front with varying interfaces and give an example.


\section{Existence of curved fronts}
This section is devoted to the construction of a curved front satisfying Theorem~\ref{th1}. We will need some properties of the pulsating front, especially the differentiability of the profile $U_e$ and the speed $c_e$ with respect to the direction $e$.

\subsection{Preliminaries}
We will use the hyperbolic function $\text{sech}(x)$ frequently in the sequel. Thus, we recall some known properties of it which can be checked easily.

\begin{lemma}\label{lemma-sech}
It holds that
$$|\text{\rm sech}'(x)|,\ |\text{\rm sech}''(x)|\le \text{\rm sech}(x), \hbox{ for $x\in\R$},$$
and there is a positive constant $p$ such that
$$\text{\rm sech}'(x)>0 \hbox{ for $x\le -p$},\ \text{\rm sech}'(x)<0 \hbox{ for $x\ge p$ and } \text{\rm sech}''(x)>0 \hbox{ for $|x|\ge p$}.$$
\end{lemma}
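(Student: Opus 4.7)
The plan is to verify the lemma by direct computation, since the statement is really a collection of elementary facts about the hyperbolic secant.

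First I would record the derivatives. Since $\text{sech}(x)=1/\cosh(x)$, straightforward differentiation gives
\[
\text{sech}'(x)=-\text{sech}(x)\tanh(x),\qquad \text{sech}''(x)=\text{sech}(x)\bigl(1-2\,\text{sech}^2(x)\bigr),
\]
where I have used $\tanh^2(x)+\text{sech}^2(x)=1$ to rewrite the second derivative in a single-term form. The first-derivative bound $|\text{sech}'(x)|\le \text{sech}(x)$ is then immediate from $|\tanh(x)|\le 1$. For the second derivative I would observe that $0<\text{sech}^2(x)\le 1$, so $1-2\text{sech}^2(x)\in[-1,1)$, which yields $|\text{sech}''(x)|\le \text{sech}(x)$.

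Next I would handle the sign statements. The identity $\text{sech}'(x)=-\text{sech}(x)\tanh(x)$ shows that $\text{sech}'$ has the opposite sign of $\tanh$, hence of $x$, so any positive constant works for the first two sign conditions. For the third one I would solve $\text{sech}''(x)>0$, i.e.\ $\text{sech}^2(x)<1/2$, equivalently $\cosh(x)>\sqrt{2}$. Setting $y=e^x$ and solving $y^2-2\sqrt{2}\,y+1=0$ gives the critical value $x_0=\log(\sqrt{2}+1)$; for $|x|>x_0$ the inequality is strict. I would therefore pick, say, $p=\log(\sqrt{2}+1)+1$ (or any constant exceeding $x_0$), which simultaneously satisfies all three sign requirements on $|x|\ge p$.

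There is no real obstacle here: the entire lemma is a one-page calculus check, and the only mild care is in choosing $p$ large enough that the strict inequality $\text{sech}''(x)>0$ holds on the closed set $\{|x|\ge p\}$ rather than only on its interior.
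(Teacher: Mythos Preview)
Your proposal is correct and is exactly the direct calculus verification the paper has in mind; the paper does not actually write out a proof but simply states that these properties ``can be checked easily,'' and your computation is precisely that easy check.
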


Then, we need a smooth V-shaped curve with $y=-x\cot\alpha$ and $y=-x\cot\beta$ being its asymptotic lines.

\begin{lemma}\label{lemma-psi}
For any $0<\alpha<\beta<\pi$, there is a smooth function $\psi(x)$ for $x\in\R$ with $y=-x\cot\alpha$ and $y=-x\cot\beta$ being its asymptotic lines and there are positive constants $k_1$, $k_2$ and $K_1$ such that
\begin{eqnarray}\label{K_1}
\left\{\begin{array}{lll}
\psi''(x)>0, &&\hbox{ for all $x\in\R$}\\
-\cot\alpha<\psi'(x)<-\cot\beta, &&\hbox{ for all $x\in\R$}\\
k_1\text{\rm sech}(x)\le \psi'(x)+\cot\alpha\le K_1 \text{\rm sech}(x), &&\hbox{ for $x<0$},\\
k_2\text{\rm sech}(x)\le -\cot\beta-\psi'(x)\le K_1 \text{\rm sech}(x), &&\hbox{ for $x\ge 0$},\\
\max(|\psi''(x)|,|\psi'''(x)|)\le K_1 \text{\rm sech}(x), &&\hbox{ for all $x\in\R$}.
\end{array}
\right.
\end{eqnarray}
\end{lemma}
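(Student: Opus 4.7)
The plan is to produce $\psi$ by writing down an explicit formula for its derivative and then integrating. The identity $\int\text{\rm sech}(s)\,ds=2\arctan(e^s)+C$, together with $\int_{-\infty}^{+\infty}\text{\rm sech}(s)\,ds=\pi$, suggests the choice
\begin{equation*}
\psi'(x):=-\cot\alpha+\frac{2(\cot\alpha-\cot\beta)}{\pi}\arctan(e^x),\qquad \psi(x):=\int_0^x\psi'(s)\,ds.
\end{equation*}
Since $0<\alpha<\beta<\pi$ gives $\cot\alpha>\cot\beta$, the function $\psi'$ is smooth, strictly increasing on $\R$, and runs from $-\cot\alpha$ at $-\infty$ to $-\cot\beta$ at $+\infty$, immediately delivering the first two lines of \eqref{K_1}. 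A direct differentiation yields
\begin{equation*}
\psi''(x)=\frac{\cot\alpha-\cot\beta}{\pi}\,\text{\rm sech}(x)>0,\qquad \psi'''(x)=\frac{\cot\alpha-\cot\beta}{\pi}\,\text{\rm sech}'(x),
\end{equation*}
so the last line of \eqref{K_1} follows at once from Lemma~\ref{lemma-sech}, with $K_1\ge (\cot\alpha-\cot\beta)/\pi$. The asymptotic lines $y=-x\cot\alpha$ (at $-\infty$) and $y=-x\cot\beta$ (at $+\infty$) then come from the integrability of $\psi'(x)+\cot\alpha$ near $-\infty$ and of $-\cot\beta-\psi'(x)$ near $+\infty$, which the remaining estimates will supply.

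What is left is the two-sided $\text{\rm sech}$ comparison. For $x<0$ the formula above gives
\begin{equation*}
\psi'(x)+\cot\alpha=\frac{2(\cot\alpha-\cot\beta)}{\pi}\arctan(e^x),
\end{equation*}
and I would compare this with $\text{\rm sech}(x)=2/(e^x+e^{-x})$ by studying the ratio $r(x):=\arctan(e^x)(e^x+e^{-x})/2$ on $(-\infty,0]$. This $r$ is continuous, equals $\pi/4$ at $x=0$, and tends to $1/2$ as $x\to-\infty$ (using $\arctan(e^x)\sim e^x$ and $e^x+e^{-x}\sim e^{-x}$), hence its infimum and supremum on $(-\infty,0]$ are both finite and strictly positive. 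This yields the two constants in $k_1\text{\rm sech}(x)\le\psi'(x)+\cot\alpha\le K_1\text{\rm sech}(x)$ for $x<0$, after possibly enlarging $K_1$. A symmetric computation rewrites $-\cot\beta-\psi'(x)=\tfrac{2(\cot\alpha-\cot\beta)}{\pi}\arctan(e^{-x})$ for $x\ge 0$ and furnishes $k_2$ together with an upper bound that is absorbed by taking $K_1$ to be the maximum of all upper constants appearing so far.

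No step here is a genuine obstacle; the whole lemma is essentially a matter of naming a natural interpolant and verifying properties. The only point needing a little care is that the lower constants $k_1,k_2$ be positive and uniform on the full half-lines rather than only at the asymptotic ends, which is handled by noting that the relevant ratios are continuous on closed half-lines with strictly positive limits at infinity, so their infima are attained or approached without vanishing. A purely qualitative alternative, should one prefer to avoid the closed-form analysis of $\arctan$, is to define $\psi$ as the double primitive of any smooth positive function bounded between $c_1\text{\rm sech}(x)$ and $c_2\text{\rm sech}(x)$ and having integral $\cot\alpha-\cot\beta$; the same verification then goes through verbatim.
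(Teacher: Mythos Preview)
Your construction is correct and in fact cleaner than the paper's. The paper proceeds in two steps: it first takes a piecewise function $\varphi$ equal to $-x\cot\alpha$ for $x\le -a$ and to $-x\cot\beta$ for $x\ge b$, smoothly convex in between (for instance an arc of the incircle of the two lines, mollified at the tangent points); it then sets $\psi(x)=\varphi(x)+\rho\,\text{sech}(x)$ with $\rho>0$ small and $a,b$ large so that $\psi''>0$ everywhere, using that $\text{sech}''>0$ off a compact set from Lemma~\ref{lemma-sech}. Your closed-form choice $\psi'(x)=-\cot\alpha+\tfrac{2(\cot\alpha-\cot\beta)}{\pi}\arctan(e^x)$ bypasses both the gluing and the smallness parameter, since $\psi''(x)=\tfrac{\cot\alpha-\cot\beta}{\pi}\,\text{sech}(x)$ is already a positive constant multiple of $\text{sech}$ and all the $\text{sech}$ bounds in \eqref{K_1} follow directly. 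What the paper's approach buys is that $\varphi$ is exactly linear outside a compact set, which makes the asymptotic lines manifest; what your approach buys is a one-line verification of every item of \eqref{K_1} with explicit constants.

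One minor point to tighten: with $\psi(x)=\int_0^x\psi'(s)\,ds$ the asymptotes are $y=-x\cot\alpha+c_-$ and $y=-x\cot\beta+c_+$ for certain constants, not automatically the lines through the origin. The identity $\tfrac{\pi}{2}-\arctan(e^x)=\arctan(e^{-x})$ gives $c_-=c_+$, so adding a single constant to your $\psi$ recovers the exact asymptotic lines as stated; alternatively, for the way the lemma is used later only the slopes of the asymptotes matter, so this is cosmetic.
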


\begin{proof}
Let $0<\alpha<\beta<\pi$. Since $\alpha<\beta$, there are two positive constants $a$, $b$ and a smooth function $\varphi(x)$ such that
\begin{eqnarray*}
\varphi(x)=\left\{\begin{array}{lll}
-x\cot \alpha, && x\le -a\\
-x\cot \beta, && x\ge b.
\end{array}
\right.
\hbox{ and } \varphi''(x)>0 \hbox{ for $-a<x<b$}.
\end{eqnarray*}
An example of such a function is that one can take an incircle of the straight lines $y=-x\cot \alpha$ and $y=-x\cot \beta$ with tangent points $(-a,a\cot\alpha)$ and $(b,-b\cot\beta)$ and $\varphi(x)$ is made of the line $y=-x\cot \alpha$ for $x\le -a$, the arc of the incircle between $-a$ and $b$, and the line $y=-x\cot \beta$ for $x\ge b$. One can mollify $\varphi(x)$ at $(-a,a\cot\alpha)$ and $(b,-b\cot\beta)$ such that $\varphi(x)\in C^{\infty}(\R)$, see Figure~3.
\begin{figure}\label{fig1}\centering
 \includegraphics[scale=0.2]{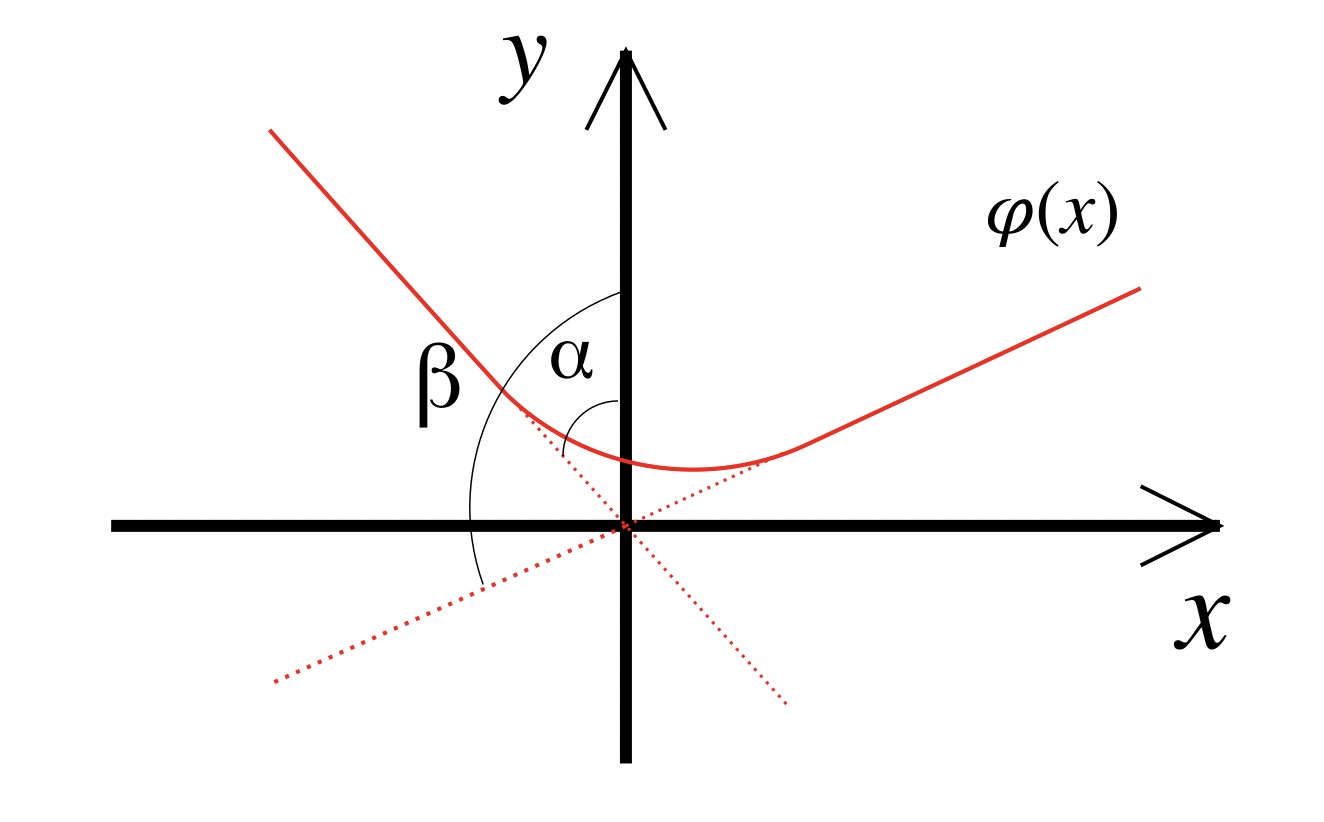}
 \caption{The function $\varphi(x)$.}
\end{figure}
Define a smooth function $\psi(x)$ as following
$$\psi(x):=\varphi(x)+\rho \text{sech}(x),$$
where $\rho>0$ is a constant. Since $\text{sech}''(x)$ is bounded and by Lemma~\ref{lemma-sech}, one can make $\rho$ small enough and $a$, $b$ sufficiently large such that
$$\psi''(x)>0 \hbox{ for all $x\in\R$}.$$
Moreover, one can easily check that $\psi(x)$ satisfies all properties in \eqref{K_1}. This completes the proof.
\end{proof}
\vskip 0.3cm

We now recall some properties of the pulsating front $U_e((x,y)\cdot e-c_e t,x,y)$. One can substitute the form $U_e((x,y)\cdot e-c_e t,x,y)$ into \eqref{eq1.1} and get that $(U_e(\xi,x,y),c_e)$ satisfies the semi-linear elliptic degenerate equation
\begin{align}\label{Ue}
c_e \partial_{\xi} U_e +\partial_{\xi\xi} U_e +2\nabla_{x,y} \partial_{\xi} U_e\cdot e+\Delta_{x,y} U_e +f(x,y,U_e)=0,\,\text{ for all $(\xi,x,y)\in\R\times\mathbb{T}^2$}.
\end{align}

From \cite[Lemma~2.1]{G}, we have the following lemma.

\begin{lemma}\label{ASY}
For any pulsating front $(U_e(\xi,x,y),c_e)$ with $c_e> 0$, there exist $\mu_1>0$, $\mu_2>0$, $C_1>0$ and $C_2>0$ independent of $e$ such that
\begin{align*}
0<U_e(\xi,x,y)\le C_1 e^{-\mu_1\xi}\ \ \text{for}\ \ \xi>0,\ (x,y)\in\mathbb{T}^2\\
0<1-U_e(\xi,x,y)\le C_2 e^{\mu_2\xi}\ \ \text{for}\ \ \xi\le 0,\ (x,y)\in\mathbb{T}^2.
\end{align*}
\end{lemma}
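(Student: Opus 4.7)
The plan is to compare $U_e$ (respectively $1-U_e$) to explicit exponential super-solutions built from the linearization of $f$ at $0$ (respectively $1$), using the extended stability assumption \eqref{lambda}.

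Since $\partial_\xi U_e<0$ and $U_e(\xi,x,y)\to 0$ uniformly in $(x,y)\in\mathbb{T}^2$ as $\xi\to+\infty$, there exists $\xi_e^+\in\R$ with $U_e(\xi,x,y)\le\sigma$ for all $\xi\ge\xi_e^+$. On that half-line the governing equation \eqref{Ue} together with $-f_u\ge\lambda$ yields the differential inequality
$$c_e\,\partial_\xi U_e+\partial_{\xi\xi} U_e+2\,e\cdot\nabla_{x,y}\partial_\xi U_e+\Delta_{x,y} U_e-\lambda U_e\ \ge\ 0.$$
The ansatz $\bar U(\xi)=C\,e^{-\mu\xi}$ (constant in $(x,y)$) is a sub-solution of the same linear operator provided $\mu^{2}-c_e\mu-\lambda\le 0$; because $0<\min_{e\in\mathbb{S}} c_e\le c_e$ by the uniform bound recalled in the introduction, one can fix a single $\mu_1>0$ (for instance $\mu_1=\sqrt{\lambda}$) so that this inequality holds for every direction $e$ simultaneously.

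To convert this into a genuinely $e$-uniform estimate, I next have to show that $\xi_e^+$ itself can be chosen bounded uniformly in $e$. This is the main technical obstacle: the normalization $U_e(0,0,0)=1/2$ fixes only one level at one point, so one must combine the monotonicity $\partial_\xi U_e<0$ with interior Schauder estimates applied to \eqref{Ue} (whose coefficients have $e$-independent bounds since $e\in\mathbb{S}$) and a standard compactness/Harnack argument to conclude that the $\sigma$-level set of $U_e$ lies inside a slab $0\le\xi\le\bar\xi^+$ of $e$-independent width. Once that is known, the comparison principle applied on $\{\xi\ge\xi_e^+\}\times\mathbb{T}^2$ to the super-solution $\bar U(\xi)=\bigl(\sigma\,e^{\mu_1\bar\xi^+}\bigr)e^{-\mu_1\xi}$ delivers the first inequality for $\xi\ge\xi_e^+$, and enlarging the constant absorbs the bounded interval $0\le\xi\le\xi_e^+$ to yield the full bound on $\{\xi>0\}$ with a constant $C_1$ independent of $e$.

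A symmetric argument at $\xi\to-\infty$ applied to $\tilde U_e:=1-U_e$, using $-f_u\ge\lambda$ on $[1-\sigma,+\infty)$ from \eqref{lambda}, gives the second estimate with $\mu_2,C_2$ independent of $e$. The decisive point in both cases is that the exponent $\mu_i$ only needs $c_e$ uniformly bounded away from zero, whereas the multiplicative constant $C_i$ is controlled through the uniform location of the level sets $\{U_e=\sigma\}$ and $\{U_e=1-\sigma\}$; that uniform location is exactly what the compactness input provides, and is the step that requires the most care.
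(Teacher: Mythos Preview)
The paper does not actually prove this lemma: it simply cites \cite[Lemma~2.1]{G}. Your sketch is the standard exponential-barrier argument (build $\bar U(\xi)=Ce^{-\mu_1\xi}$ as a super-solution of the linearized operator on $\{\xi\ge\bar\xi^+\}\times\mathbb{T}^2$, invoke the weak maximum principle using the zeroth-order coefficient $-\lambda<0$ and the compactness of $\mathbb{T}^2$, then absorb the finite strip), and it is correct in outline; this is almost certainly what the proof in \cite{G} does as well.

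Two minor remarks. First, your choice $\mu_1=\sqrt{\lambda}$ already gives $\mu_1^2-c_e\mu_1-\lambda=-c_e\sqrt{\lambda}<0$ from $c_e>0$ alone, so the uniform positive lower bound on $c_e$ is not actually needed for the exponent (it would matter only if you wanted the \emph{optimal} decay rate). Second, the step you flag as delicate---the $e$-uniform location of the level set $\{U_e=\sigma\}$---is most cleanly obtained not through Schauder/Harnack from scratch but by invoking the continuity of $e\mapsto U_e$ in $L^\infty(\R\times\mathbb{T}^2)$ established in \cite{G} (and recalled in the paper just before Lemma~2.6): for each $e_0$ choose $\xi_{e_0}$ with $U_{e_0}(\xi_{e_0},\cdot,\cdot)<\sigma/2$, use continuity to get the same with $\sigma$ on a neighborhood of $e_0$, use monotonicity in $\xi$, and cover the compact $\mathbb{S}$ by finitely many such neighborhoods. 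That packages the compactness argument you describe and avoids reproving regularity estimates.
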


Then, by standard parabolic estimates applied to $u(t,x,y)=U_e((x,y)\cdot e-c_e t,x,y)$, one can get that $|\nabla
_{x,y}u_t|,\ |u_{tt}|,\ |u_t|\le C u(t+1,x,y)$ for some constant $C>0$ and $(t,x,y)\in\R\times\R^2$. Notice that $u_t(t,x,y)=-c_e \partial_{\xi} U_e((x,y)\cdot e-c_e t,x,y)$ with $c_e>0$. Then, by Lemma~\ref{ASY}, we have the following lemma.

\begin{lemma}\label{ASY2}
For any pulsating front $(U_e(\xi,x,y),c_e)$ with $c_e> 0$, there exist $\mu_3>0$ and $C_3>0$  independent of $e$ such that
\begin{align*}
|\partial_{\xi} U_e(\xi,x,y)|,\ |\partial_{\xi\xi} U_e(\xi,x,y)|,\ |\nabla_{x,y}\partial_{\xi} U_e(\xi,x,y)|\le C_3 e^{-\mu_3 |\xi|}\ \ \text{for}\ \ \xi\in\R,\ (x,y)\in\mathbb{T}^2.
\end{align*}
\end{lemma}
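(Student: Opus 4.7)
The plan is to translate the desired bounds on derivatives of $U_e(\xi,x,y)$ into bounds on time derivatives of the parabolic solution $u(t,x,y):=U_e((x,y)\cdot e-c_e t,x,y)$, and then combine interior parabolic regularity with the exponential decay of $u$ supplied by Lemma~\ref{ASY}. The chain rule gives $u_t=-c_e\partial_\xi U_e$, $u_{tt}=c_e^2\partial_{\xi\xi}U_e$ and $\nabla_{x,y}u_t=-c_e(\nabla_{x,y}\partial_\xi U_e+e\,\partial_{\xi\xi}U_e)$, so, since $\min_{e\in\mathbb{S}}c_e>0$, it suffices to establish exponential decay in $|\xi|$ of $|u_t|$, $|u_{tt}|$ and $|\nabla_{x,y}u_t|$ uniformly in $(x,y)$ and in $e$; the three pointwise bounds in the lemma are then recovered by solving the chain-rule identities.

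For $\xi_0:=(x_0,y_0)\cdot e-c_e t_0\ge 1$, I would apply interior parabolic Schauder estimates first to \eqref{eq1.1} satisfied by $u$, and then to the linearized equation $v_t-\Delta v=f_u(x,y,u)v$ satisfied by $v:=u_t$, to obtain
\begin{equation*}
|u_t(t_0,x_0,y_0)|+|u_{tt}(t_0,x_0,y_0)|+|\nabla_{x,y}u_t(t_0,x_0,y_0)|\le C\sup_{Q}u,
\end{equation*}
where $Q$ is a fixed-size parabolic cylinder around $(t_0,x_0,y_0)$ and $C$ depends only on $\|f\|_{C^{1,\alpha}}$ and on $\max_e c_e$. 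The sup of $u$ on the right-hand side, rather than an additive $L^{\infty}$-type bound, is produced because $f(x,y,0)=0$ and $f$ is Lipschitz in $u$, so $|f(x,y,u)|\le L\,u$ and the Schauder inhomogeneity scales linearly with $u$. Over $Q$ the variable $\xi=(x,y)\cdot e-c_e t$ stays within a window of width $O(1+\max_e c_e)$ centered at $\xi_0$, and Lemma~\ref{ASY} bounds $\sup_Q u$ by a fixed multiple of $e^{-\mu_1\xi_0}$, giving the desired decay on $\{\xi\ge 1\}$. For $\xi_0\le -1$ the identical argument applies to $\tilde u:=1-u$, which solves $\tilde u_t-\Delta\tilde u=-f(x,y,1-\tilde u)$; this nonlinearity still obeys (F1)--(F3) with the roles of $0$ and $1$ swapped, $|\tilde u_t|=|u_t|$, and the companion estimate in Lemma~\ref{ASY} bounds $\sup_Q\tilde u\le Ce^{\mu_2\xi_0}$. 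Taking $\mu_3:=\min(\mu_1,\mu_2)$ and absorbing the compact region $|\xi|\le 1$, on which the three quantities are globally bounded, finishes the argument.

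The main point requiring care, rather than a substantive obstacle, is to ensure that all constants are uniform in $e$. The parabolic operator $\partial_t-\Delta$ and the reaction $f$ are $e$-independent, while $e$ enters the argument only through $c_e$, which the excerpt has already pinned between positive finite bounds $0<\min_e c_e\le\max_e c_e<+\infty$; together with the $e$-independent constants $\mu_1,\mu_2,C_1,C_2$ of Lemma~\ref{ASY}, this yields uniform $\mu_3$ and $C_3$ as claimed.
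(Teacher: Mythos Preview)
Your proposal is correct and follows essentially the same route as the paper: the paper also passes to the parabolic function $u(t,x,y)=U_e((x,y)\cdot e-c_e t,x,y)$, invokes standard parabolic estimates to bound $|u_t|$, $|u_{tt}|$, $|\nabla_{x,y}u_t|$ by a constant times $u$ on a shifted cylinder, and then feeds in Lemma~\ref{ASY}. Your write-up is in fact more complete than the paper's two-line sketch, since you spell out the chain-rule inversion, the symmetric treatment of $\xi\le -1$ via $\tilde u=1-u$, and the reason the constants are uniform in $e$.
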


We also need the following properties.

\begin{lemma}\label{lemma-ut'}
For any $C>0$, there is $0<\delta<1/2$ independent of $e$ such that 
\be\label{leule}
\delta\le U_e(\xi,x,y)\le 1-\delta, \hbox{ for $-C\le \xi\le C$ and $(x,y)
\in\mathbb{T}^2$},
\ee
and there is $r>0$ independent of $e$ such that 
\be\label{Uer}
-\partial_{\xi}U_e(\xi,x,y)\ge r \hbox{ for for $-C\le \xi\le C$ and $(x,y)
\in\mathbb{T}^2$}.
\ee
\end{lemma}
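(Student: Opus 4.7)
The plan is to reduce both inequalities to the simple statement that a continuous, strictly positive (resp.\ strictly sub-unit) function on a compact set attains its extrema. All the work lies in establishing continuity of the map $e\mapsto(c_e,U_e)$; once that is in hand, the lemma follows immediately by compactness of $\mathbb{S}\times[-C,C]\times\mathbb{T}^2$.

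\emph{Continuity in $e$.} Let $e_n\to e_{\ast}$ in $\mathbb{S}$. Since $0<\min_{e}c_e\le\max_{e}c_e<+\infty$, the sequence $c_{e_n}$ is bounded and, extracting a subsequence, $c_{e_n}\to c_{\ast}$ with $c_{\ast}>0$. The uniform bounds of Lemmas~\ref{ASY}--\ref{ASY2} together with interior Schauder estimates applied either to the degenerate elliptic equation~\eqref{Ue} or equivalently to the parabolic solution $u_n(t,x,y)=U_{e_n}((x,y)\cdot e_n-c_{e_n}t,x,y)$ provide uniform $C^{2,\alpha}$ bounds on $\{U_{e_n}\}$ on compact subsets of $\R\times\mathbb{T}^2$. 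By Arzel\`a--Ascoli and a diagonal argument, a further subsequence satisfies $U_{e_n}\to U_{\ast}$ in $C^{2}_{loc}$. Passing to the limit in \eqref{Ue}, $U_{\ast}$ solves the pulsating front equation in direction $e_{\ast}$ with speed $c_{\ast}$; the uniform tails from Lemma~\ref{ASY} pass to the limit, giving $U_{\ast}(-\infty,\cdot)=1$ and $U_{\ast}(+\infty,\cdot)=0$; and the normalization $U_{\ast}(0,0,0)=1/2$ is preserved. By the uniqueness (up to shifts in $\xi$) of the pulsating front in each direction, together with the shift being fixed by the normalization, $(c_{\ast},U_{\ast})=(c_{e_{\ast}},U_{e_{\ast}})$. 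Since every subsequence has a further subsequence with this same limit, the whole sequence converges, which gives continuity of $e\mapsto(c_e,U_e)$ in $\R\times C^{2}_{loc}(\R\times\mathbb{T}^2)$.

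\emph{Conclusion by compactness.} The strict monotonicity $\partial_{\xi}U_e<0$ (cited from \cite{G}) combined with $U_e(-\infty,\cdot)=1$, $U_e(+\infty,\cdot)=0$ forces $0<U_e(\xi,x,y)<1$ at every point, and $-\partial_{\xi}U_e(\xi,x,y)>0$ at every point. By the continuity just established, the two functions
$$(e,\xi,x,y)\mapsto U_e(\xi,x,y),\qquad (e,\xi,x,y)\mapsto -\partial_{\xi}U_e(\xi,x,y)$$
are continuous on the compact set $\mathbb{S}\times[-C,C]\times\mathbb{T}^2$, the first taking values in $(0,1)$ and the second in $(0,+\infty)$. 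Their extrema are therefore attained and are respectively in $(0,1/2)$, in $(1/2,1)$, and in $(0,+\infty)$, which gives constants $\delta$ and $r$ independent of $e$ with the claimed estimates \eqref{leule} and \eqref{Uer}.

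\emph{Main obstacle.} The only delicate point is that $C^{2}_{loc}$ convergence alone does not allow interchanging the limits $n\to\infty$ and $\xi\to\pm\infty$, so a priori $U_{\ast}$ might lose its asymptotics. The resolution is exactly the uniform-in-$e$ exponential tail control of Lemma~\ref{ASY}: for any $\varepsilon>0$ there is $M_\varepsilon$ with $U_{e_n}(\xi,\cdot)<\varepsilon$ for $\xi>M_\varepsilon$ and $1-U_{e_n}(\xi,\cdot)<\varepsilon$ for $\xi<-M_\varepsilon$, uniformly in $n$, which forces the same bounds on $U_{\ast}$ and secures the pulsating-front asymptotics of the limit. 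With this point handled, uniqueness closes the argument.
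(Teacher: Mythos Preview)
Your proof is correct. The argument via continuity of $e\mapsto(c_e,U_e)$ followed by compactness of $\mathbb{S}\times[-C,C]\times\mathbb{T}^2$ is clean and complete; the only point that needed care---preservation of the asymptotics of $U_\ast$---you handle correctly with the uniform tails from Lemma~\ref{ASY}. (A tiny remark: the fact that $\delta<1/2$ rather than $\delta\le 1/2$ uses the normalization $U_e(0,0,0)=1/2$ together with strict monotonicity in $\xi$, so that the minimum on $[-C,C]\times\mathbb{T}^2$ is strictly below $1/2$ and the maximum strictly above.)

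The paper proceeds differently for \eqref{Uer}. For \eqref{leule} it does essentially what you do: cite the transition-front bound of \cite{BH2} for each fixed $e$ and then invoke continuity in $e$. For \eqref{Uer}, however, it deliberately avoids the compactness-over-$\mathbb{S}$ shortcut and instead runs a contradiction argument: assuming a sequence $(t_n,x_n,y_n)$ along the interface with $u_t\to 0$, it passes to a limit, applies the strong maximum principle to the linearized equation satisfied by $u_t\ge 0$ to force $v_\infty\equiv 0$, and then contradicts \eqref{leule} by propagating this vanishing into the region where $u\ge 1-\sigma$. The paper says explicitly that this is overkill for pulsating fronts but is written so as to apply later to the curved front $V(t,x,y)$, which is not parametrized by any compact set of directions. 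Your approach is shorter and perfectly adequate for the lemma as stated; the paper's approach is the one that survives when the family of solutions is no longer indexed by a compact parameter, and it is reused verbatim (see the phrase ``by a similar proof as of Lemma~\ref{lemma-ut'}'' in the proofs of Theorems~\ref{th2} and~\ref{th3} and in Section~3.2).
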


\begin{proof}
Let $u(t,x,y)=U_e((x,y)\cdot e-c_e t,x,y)$. One can easily check that $u(t,x,y)$ is a transition front connecting $0$ and $1$ with set $\{(t,x,y)\in\R\times\R^2; (x,y)\cdot e-c_e t=0\}$ being its interfaces. Then, by \cite[Theorem~1.2]{BH2}, one immediately has that there is $0<\delta<1/2$ such that 
$$\delta\le u(t,x,y)\le 1-\delta, \hbox{ for $-C\le (x,y)\cdot e-c_e t\le C$.}$$
By continuity of $U_e$ with respect to $e$, one has that $\delta$ can be independent of $e$.

The following proof for \eqref{Uer} can be simplified for the pulsating front $U_e$. However, we do it in a general way in purpose that such idea can be used to prove that the curved front which we construct later has similar properties. Notice that $u_t(t,x,y)>0$ satisfies
$$
(u_t)_t-\Delta u_t -f_u(x,y,u)u_t=0, \hbox{ for $(t,x,y)\in\R\times\R^2$}.
$$
Assume that there is a sequence $\{(t_n,x_n,y_n)\}_{n\in\mathbb{N}}$ of $\R\times\R^2$ such that $-C\le (x_n,y_ n)\cdot e-c_e t_n\le C$ and $u_t(t_n,x_n,y_n)\rightarrow 0$ as $n\rightarrow +\infty$. Since $f(x,y,u)$ is periodic in $(x,y)$, there is $(x',y')\in \R^2$ such that $f(x+x_n,y+y_n,u)\rightarrow f(x+x',y+y',u)$ as $n\rightarrow +\infty$.
Let $u_n(t,x,y)=u(t+t_n,x+x_n,y+y_n)$ and $v_n(t,x,y)=u_t(t+t_n,x+x_n,y+y_n)$. By standard parabolic estimates, $u_n(t,x,y)$ converges to a solution $u_{\infty}(t,x,y)$ of 
$$u_t-\Delta u -f(x+x',y+y',u_{\infty})=0, \hbox{ for $(t,x,y)\in\R\times\R^2$},$$
and $v_n(t,x,y)$ converges to a solution $v_{\infty}(t,x,y)$ of
$$
v_t-\Delta v -f_u(x+x',y+y',u_{\infty})v=0, \hbox{ for $(t,x,y)\in\R\times\R^2$}.
$$
Moreover, $v_{\infty}(t,x,y)$ satisfies $v_{\infty}(t,x,y)\ge 0$ and $v_{\infty}(0,0,0)=0$. By the maximum principle, $v_{\infty}(t,x,y)\equiv 0$. Since $U_e(\xi,x,y)\rightarrow 1$ as $\xi\rightarrow -\infty$, there is $R>0$ large enough such that 
$$u(t,x,y)\ge 1-\sigma \hbox{ for $(t,x,y)\in\R\times\R^2$ such that $(x,y)\cdot e -c_e t\le -R$}$$
where $\sigma$ is defined in (F3). Take $(x_*,y_*)\in \R^2$ such that $(x_*,y_*)\cdot e<-R-C$.  Then, $v_{\infty}(t,x,y)\equiv 0$ implies that $u_t(t+t_n,x+x_*+x_n,y+y_*+y_n)\rightarrow 0$ as $n\rightarrow +\infty$ locally uniformly in $\R\times\R^2$. Notice that $(x_*+x_n,y_*+y_n)\cdot e -c_e t_n\le -R$ and hence, $u(t_n,x_*+x_n,y_*+y_n)\ge 1-\sigma$. Also notice that $1$ is the only equilibrium of \eqref{eq1.1} over $1-\sigma$ from (F3) and \eqref{lambda}. It further implies that $u(t+t_n,x+x_*+x_n,y+y_*+y_n)\rightarrow 1$ locally uniformly in $\R\times\R^2$. Since $(x_*,y_*)$ is fixed and $-C\le (x_n,y_n)\cdot e -c_e t_n\le C$, it reaches a contradiction with \eqref{leule}. This completes the proof. 
\end{proof}
\vskip 0.3cm

It follows from \cite[Theorem~1.5]{G} that $U_e$ and $c_e$ are differentiable with respect to $e$. Remember that $U_e$ are normalized by $U_e(0,0,0)=1/2$ for all $e\in\mathbb{S}$. For any $b\in \R^2\setminus\{0\}$, define
\be\label{Ubcb}
U_b=U_{\frac{b}{|b|}} \hbox{ and } c_b=c_{\frac{b}{|b|}}.
\ee

\begin{lemma}
Let $U_b$ and $c_b$ be defined in \eqref{Ubcb}. Then, $U_b$ and $c_b$ are doubly continuously Fr\'{e}chet differentiable at any $b\in \R^N\setminus\{0\}$.
\end{lemma}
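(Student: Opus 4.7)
The plan is to reduce the statement to the chain rule, building on the $C^1$ differentiability of $e\mapsto U_e$ and $e\mapsto c_e$ on $\mathbb{S}$ cited from [G, Theorem 1.5].

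First, I would observe that the radial projection $\Phi:\R^N\setminus\{0\}\to\mathbb{S}$, $\Phi(b):=b/|b|$, is $C^\infty$ on $\R^N\setminus\{0\}$, with first derivative $D\Phi(b)[h]=h/|b|-((b\cdot h)/|b|^3)\,b$ and a second derivative obtained by one further explicit differentiation. Hence all of the $b$-dependence beyond that of the unit direction is smooth, and the regularity of $b\mapsto U_b$ and $b\mapsto c_b$ is governed entirely by the regularity of $e\mapsto U_e$ and $e\mapsto c_e$ on $\mathbb{S}$.

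Next, I would upgrade the $C^1$ differentiability in $e$ to $C^2$. Differentiating \eqref{Ue} once in $e$ gives, for $V_e:=\partial_e U_e$ and $d_e:=\partial_e c_e$, the linear inhomogeneous problem
$$\mathcal{L}_e V_e=-d_e\,\partial_\xi U_e-2\nabla_{x,y}\partial_\xi U_e,$$
where
$$\mathcal{L}_e V:=c_e\,\partial_\xi V+\partial_{\xi\xi}V+2\nabla_{x,y}\partial_\xi V\cdot e+\Delta_{x,y}V+f_u(x,y,U_e)\,V$$
acts on functions $L$-periodic in $(x,y)$ with suitable exponential decay in $\xi$. The bistable asymptotics of Lemmas~\ref{ASY}--\ref{ASY2}, together with standard Fredholm theory for degenerate elliptic operators in the $(\xi,x,y)$-cylinder, give that $\ker\mathcal{L}_e$ is one-dimensional and spanned by $\partial_\xi U_e$; the scalar $d_e$ is then fixed by the Fredholm solvability condition, and $V_e$ itself is pinned down uniquely by differentiating the normalization $U_e(0,0,0)=1/2$. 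Differentiating once more in $e$ produces an analogous linear problem for the second derivatives of $U_e$ and $c_e$, whose source is a polynomial expression in the already $C^0$ first derivatives of $U_e$, $c_e$ and their $\xi,x,y$-derivatives; Fredholm theory again yields a unique solution and Schauder estimates give continuity in $e$.

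Finally, the chain rule gives
$$DU_b[h]=DU_e\bigl|_{e=b/|b|}\bigl[D\Phi(b)[h]\bigr],$$
and a further differentiation expresses $D^2U_b$ as $D^2U_e$ paired with two copies of $D\Phi$ plus $DU_e$ paired with $D^2\Phi$; each piece is jointly continuous in $b\in\R^N\setminus\{0\}$. The identical argument, carried out on the scalar quantity $c_e$, yields the corresponding claim for $c_b$. I expect the main obstacle to be the Fredholm analysis at the second derivative: one must verify that the source term obtained by differentiating the linearised equation one more time lies in the range of $\mathcal{L}_e$ (compatibility is automatic from the first-order solvability condition, but the bookkeeping is delicate) and that the resulting solution inherits the exponential decay estimates of Lemmas~\ref{ASY} and \ref{ASY2}, so that the formal chain-rule expressions live in the natural Banach space of $(\xi,x,y)$-profiles. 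Once this is established, the $C^2$-continuity of the composition with $\Phi$ is immediate.
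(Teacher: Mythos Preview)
The paper gives no self-contained proof of this lemma; it is attributed to \cite[Theorem~1.5]{G}, and the machinery behind that result surfaces in the proof of the next lemma (Lemma~\ref{lemma-Ue'}). The approach there is an implicit function theorem argument: the pulsating-front problem is recast as $G_e(v,\vartheta,\eta)=0$ for a map $G_e:H^2(\R\times\T^N)\times\R\times\R^N\to D\times\R$ which is shown to be \emph{doubly} continuously Fr\'echet differentiable in the perturbation parameter $\eta$ from the outset, and whose partial derivative $Q_e=\partial_{(v,\vartheta)}G_e(0,0,0)$ is an isomorphism. The IFT then delivers $C^2$ dependence on $\eta$ in one stroke, with no bootstrap. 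The passage from $e\in\mathbb{S}$ to $b\in\R^N\setminus\{0\}$ is built into the parametrisation $\eta$ rather than handled by a separate chain-rule composition with $\Phi(b)=b/|b|$.

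Your route is genuinely different: you take only the $C^1$ conclusion from \cite{G}, differentiate the linearised equation once more, solve the resulting inhomogeneous problem for a candidate second derivative via Fredholm theory (with the solvability condition fixing $c''_e$ and the normalisation fixing the remaining freedom), and then compose with the smooth radial projection. This is a valid alternative---it amounts to re-deriving the $C^2$ case of the implicit function theorem by hand---but it is longer, and the step you yourself flag as delicate really is the crux: after solving the linear problem you must still verify that its solution is the actual Fr\'echet derivative of $e\mapsto V_e$, which requires a difference-quotient argument on top of mere existence and uniqueness. The IFT approach packages that verification automatically. Note also that \cite[Theorem~1.5]{G} already asserts the full $C^2$ statement for $b\in\R^N\setminus\{0\}$, so your chain-rule reduction, while correct, is not strictly needed.
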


Let us denote the Fr\'{e}chet derivatives up to second order of $U_e$ and $c_e$ with respect to $e$ by $U'_e$, $U''_e$, $c'_e$ and $c''_e$. From \cite{G}, one knows that $U_e$ is continuous with respect to $e$ in $L^{\infty}$ space. Since $U_e$ is uniformly bounded for $e\in\mathbb{S}$, the Fr\'{e}chet derivatives are all bounded in the sense that
$$\|U'_e\|=\sup_{h\in\R^N} \frac{\|U'_e\cdot h\|_{L^{\infty}(\R\times\mathbb{T}^N)}}{|h|}<+\infty,\ \|U''_e\|=\sup_{(h,\rho)\in\R^N\times\R^N} \frac{\|(U''_e \cdot h) \cdot \rho\|_{L^{\infty}(\R\times\mathbb{T}^N)}}{|h||\rho|}<+\infty,$$
and
$$\|c'_e\|=\sup_{h\in\R^N} \frac{|c'_e\cdot h|}{|h|}<+\infty,\ \|c''_e\|=\sup_{(h,\rho)\in\R^N\times\R^N} \frac{|(c''_e\cdot h)\cdot \rho|}{|h||\rho|}<+\infty.$$
We also know from \cite{G} that for any $h\in\R^2$, $\rho\in\R^2$, $U'_e\cdot h$ and $(U''_e\cdot h)\cdot \rho$ are differentiable with respect to $\xi$, $x$ and $y$ up to second order and these derivatives are bounded too. We then need the following properties of $U_e'$.

\begin{lemma}\label{lemma-Ue'}
For any $e\in\mathbb{S}$, there exist $\mu_4>0$ and $C_4>0$ independent of $e$ such that
$$|(U'_e\cdot h) (\xi,x,y)|,\ |(\partial_{\xi}U'_e\cdot h) (\xi,x,y)|\le C_4 e^{-\mu_4|\xi|} |h|, \hbox{ for any $h\in\R^2$, $\xi\in\R$ and $(x,y)\in\mathbb{T}^2$}.$$
\end{lemma}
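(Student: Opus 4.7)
The plan is to linearize the profile equation \eqref{Ue} in the direction parameter at $b=e$ and then transfer the exponential decay of the resulting right-hand side to the solution via an exponential-barrier maximum-principle argument.

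First I would differentiate \eqref{Ue} in $b$ at $b=e$ along $h\in\R^2$, set $V:=U_e'\cdot h$ and $\gamma:=c_e'\cdot h$, and obtain the linearized equation
$$L_e V := c_e\partial_\xi V+\partial_{\xi\xi}V+2\nabla_{x,y}\partial_\xi V\cdot e+\Delta_{x,y}V+f_u(x,y,U_e)\,V = -\gamma\,\partial_\xi U_e-2\nabla_{x,y}\partial_\xi U_e\cdot h =: S.$$
Continuity of $b\mapsto c_b'$ and compactness of $\mathbb{S}$ make $\|c_e'\|$ uniformly bounded, so Lemma~\ref{ASY2} yields $|S(\xi,x,y)|\le C|h|e^{-\mu_3|\xi|}$ with $C$ independent of $e$ and $h$. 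By Lemma~\ref{ASY} and (F3), there is $\xi_0>0$, uniform in $e$, with $f_u(x,y,U_e(\xi,x,y))\le -\lambda/2$ whenever $|\xi|\ge\xi_0$. Fix $\mu_4\in\bigl(0,\min\{\mu_3,\sqrt{\lambda/4}\}\bigr]$ and take as barrier $\bar V(\xi):=A\,e^{-\mu_4\xi}$ on $\xi\ge\xi_0$. Since $c_e>0$, a direct computation gives $L_e\bar V=A(\mu_4^2-c_e\mu_4+f_u)e^{-\mu_4\xi}\le -(A\lambda/4)e^{-\mu_4\xi}$. Choosing $A=A_0|h|$ with $A_0:=\max\bigl\{4C/\lambda,\,e^{\mu_4\xi_0}\sup_{e\in\mathbb{S}}\|U_e'\|\bigr\}$ (finite by continuous Fr\'{e}chet differentiability of $b\mapsto U_b$ and compactness of $\mathbb{S}$), the functions $\bar V\pm V$ satisfy $L_e(\bar V\pm V)\le 0$ on $(\xi_0,+\infty)\times\mathbb{T}^2$ and are nonnegative on $\{\xi_0\}\times\mathbb{T}^2$; the maximum principle then forces $|V(\xi,x,y)|\le A_0|h|e^{-\mu_4\xi}$ there. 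A symmetric barrier $A_0|h|e^{\mu_4\xi}$ handles $\xi\le -\xi_0$, and on the middle slab $|\xi|\le\xi_0$ the trivial bound $|V|\le(\sup_{e\in\mathbb{S}}\|U_e'\|)|h|$ is easily absorbed into $C_4$.

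For the bound on $\partial_\xi V$ I would then apply interior Schauder estimates to $L_e V=S$ on unit cylinders $[\xi-1,\xi+1]\times\mathbb{T}^2$: both $\|V\|_{L^\infty}$ and $\|S\|_{C^\alpha}$ on such cylinders decay like $e^{-\mu_4|\xi|}|h|$ by the previous step together with Lemma~\ref{ASY2}, and the Schauder constants are uniform in $e$ thanks to the uniform Hölder regularity of $f_u(x,y,U_e)$ across the compact family $\{U_e\}_{e\in\mathbb{S}}$. This yields the desired exponential bound on $\partial_\xi V$ with constants $C_4$ and $\mu_4$ independent of $e$.

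The main obstacle is justifying the maximum-principle step, since $L_e$ is only degenerate elliptic on $\R\times\mathbb{T}^2$ and the relevant domain $(\xi_0,+\infty)\times\mathbb{T}^2$ is unbounded in $\xi$. The coefficient matrix of its second-order part is positive semidefinite, so at any interior minimum of $W:=\bar V\pm V$ that part applied to $W$ is nonnegative while the first-order term $c_e\partial_\xi W$ vanishes, leaving $L_eW\ge f_uW$; combined with $f_u\le -\lambda/2<0$ this forbids a negative interior minimum. The remaining possibility that the infimum of $W$ is only approached as $\xi\to+\infty$ I would rule out by a standard translation-compactness argument: along a negative minimizing sequence $(\xi_n,x_n,y_n)$ with $\xi_n\to+\infty$, the shifts $W(\cdot+\xi_n,\cdot,\cdot)$ subconverge, by elliptic regularity and periodicity in $(x,y)$, to a bounded solution of the limit equation with coefficient $f_u(x,y,0)\le -\lambda$ attaining a negative interior minimum, again a contradiction.
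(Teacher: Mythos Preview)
Your approach is correct and genuinely different from the paper's. The paper does not run a barrier argument on the linearized equation; instead it \emph{re-proves Fr\'echet differentiability of $b\mapsto U_b$ in an exponentially weighted space}. Concretely, it sets $V_e=p^{-1}U_e$ with $p(\xi)\sim e^{-r\xi}$, rewrites \eqref{Ue} in terms of $V_e$, and rebuilds the whole implicit-function-theorem machinery of \cite{G,DHZ} (operators $M_e$, $H_e$, $G_e$, $Q_e$, kernel analysis, closed range, \dots) to conclude that $V_e'$ is bounded in $L^\infty$; the decay of $U_e'\cdot h=p\,(V_e'\cdot h)$ then comes for free from the weight $p$. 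Your route is considerably more elementary: once one knows $\|U_e'\|$ is bounded (already provided by \cite{G}), the barrier comparison extracts the decay directly, with no need to revisit Fredholm theory.

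Two small points to tighten. First, differentiating $b\mapsto b/|b|$ at $e$ gives $h-(e\cdot h)e$, so the source term in your linearized equation should be $-\gamma\,\partial_\xi U_e-2\nabla_{x,y}\partial_\xi U_e\cdot\bigl(h-(e\cdot h)e\bigr)$; this does not change your bound on $S$. Second, since $L_e$ is only degenerate elliptic, the phrases ``elliptic regularity'' and ``interior Schauder estimates on $L_eV=S$'' need justification: the clean way---which is exactly what the paper does for the $\partial_\xi V$ estimate---is to pass to parabolic variables $u(t,x,y)=V((x,y)\cdot e-c_e t,x,y)$, where the equation becomes $u_t-\Delta u=f_u\,u-S$, and use parabolic estimates. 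Alternatively, for your translation-compactness step you can simply invoke that $U_e'\cdot h$ is $C^2$ with bounded derivatives (stated in the paragraph preceding the lemma, from \cite{G}), which already gives the needed precompactness of the shifts.
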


\begin{proof}
Take a smooth nonincreasing function $p(\xi)$ such that
$$p(\xi)=1 \hbox{ for $\xi\le 0$ and } p(\xi)=e^{-r \xi} \hbox{ for $\xi\ge b$},$$
for some positive constants $r$ and $b$. Here, one can make $r$ and $b$ to be small and large enough respectively such that 
\be\label{r}
r\le \min\{\mu_1,\mu_2\},
\ee
and
\be\label{eq-p'}
c_e\left|\frac{p'(\xi)}{p(\xi)}\right|+\left|\frac{p''(\xi)}{p(\xi)}\right|\le \frac{\lambda}{2} \hbox{ for all $\xi\in\R$ and $e\in\mathbb{S}$},
\ee
where $\lambda>0$ is defined in (F3).

For every direction $e$, we define a function $V_e(\xi,x,y)$ by
$$V_e(\xi,x,y):=p^{-1}(\xi)U_e(\xi,x,y), \hbox{ for $\xi\in\R$ and $(x,y)\in\mathbb{T}^2$}.$$
By Lemma~\ref{ASY} and \eqref{r}, one has 
$$V_e(-\infty,x,y)=1 \hbox{ and } V_e(+\infty,x,y)=0, \hbox{ uniformly for $(x,y)\in\mathbb{T}^{2}$ and $e\in\mathbb{S}$},$$
and $V(\xi,x,y)\in L^2(\R\times\mathbb{T}^2)$. Since $U_e(\xi,x,y)$ satisfies \eqref{Ue}, one can get that $V_e(\xi,x,y)$ satisfies
\begin{align*}
c_e\partial_{\xi}V+\partial_{\xi\xi}V_e +2\nabla_{x,y}\partial_{\xi}V_e\cdot e &+\Delta_{x,y} V_e +\frac{2p'}{p} \partial_{\xi} V_e + \frac{2p'}{p} \nabla_{x,y} V_e\cdot e\\
&+\frac{1}{p} f(x,y,p V_e) + \Big(c_e\frac{p'}{p}+\frac{p''}{p}\Big)V_e=0, \hbox{ for $(\xi,x,y)\in\R\times\mathbb{T}^2$}. 
\end{align*}
From (F3) and \eqref{eq-p'}, there is $C>0$ such that
\begin{eqnarray}\label{fv}
\left\{\begin{array}{lll}
\frac{1}{p} f(x,y,p V_e) + \Big(c_e\frac{p'}{p}+\frac{p''}{p}\Big)V_e\le -\frac{\lambda}{2} V_e, &&\hbox{ for $(x,y)\in\mathbb{T}^2$ and $\xi\ge C$},\\
\frac{1}{p} f(x,y,p V_e) + \Big(c_e\frac{p'}{p}+\frac{p''}{p}\Big)V_e\ge \frac{\lambda}{2} (1-V_e), &&\hbox{ for $(x,y)\in\mathbb{T}^2$ and $\xi\le -C$},
\end{array}
\right.
\end{eqnarray}

For any $e\in\mathbb{S}$, define a linear operator
$$M_e(v):=c_e\partial_{\xi} v+\partial_{\xi\xi} v +2\nabla_{x,y} \partial_{\xi} v\cdot e +\Delta_{x,y} v +\frac{2p'}{p} \partial_{\xi}v + \frac{2p'}{p} \nabla_{x,y} v\cdot e-\beta v,$$
where $\beta>0$ is a fixed real number and
$$v\in D:=\{v\in H^1(\R\times\T^N);\ \partial_{\xi\xi}v+2\nabla_y\partial_{\xi}v\cdot e+\Delta_y v                                                                          \in L^2(\R\times\T^N)\},$$
see \cite{G} for definitions of $L^2(\R\times\mathbb{T}^2)$, $H^1(\R\times\mathbb{T}^2)$ and their norms. The space $D$ is endowed with the norm $\|v\|_{D}=\|v\|_{H^1(\R\times\mathbb{T}^N)}+\|\partial_{\xi\xi}v+2\nabla_y\partial_{\xi}v\cdot e+\Delta_y v\|_{L^2(\R\times\T^N)}$. Then, by the similar proofs of Lemma 3.1, Lemma 3.2 and Lemma 3.3 in \cite{DHZ} and Lemma~2.7 in \cite{G}, one knows that $M_e$ satisfies all the properties in Lemma~2.7 of \cite{G}, such as invertibility and boundedness. For any $e\in\mathbb{S}$, we then define
\begin{align*}
H_e(v):=c_e\partial_{\xi} v+\partial_{\xi\xi} v +2\nabla_{x,y} \partial_{\xi} v\cdot e +\Delta_{x,y} v &+\frac{2p'}{p} \partial_{\xi}v + \frac{2p'}{p} \nabla_{x,y} v\cdot e\\
&+f_u(y,pV_e)v+\Big(c_e\frac{p'}{p}+\frac{p''}{p}\Big) v,\quad v\in D.
\end{align*}
Notice that $H_e(v)=\widetilde{H}_e(pv)/p$ with $0<p(\xi)\le 1$ where
\begin{align*}
\widetilde{H}_e(v):=c_e\partial_{\xi} v+\partial_{\xi\xi} v +2\nabla_y \partial_{\xi} v\cdot e +\Delta_y v +f_u(y,U_e)v,\quad v\in D.
\end{align*}
By Lemma~2.9 in \cite{G}, one knows that the operator $\widetilde{H}_e$ and its adjoint operator $\widetilde{H}^*_e$ have algebraically simple eigenvalue $0$ and the kernel of $\widetilde{H}_e$ is generated by $\partial_{\xi} U_e$. Therefore, the operator $H_e$ and its adjoint operator $H^*_e$ also have algebraically simple eigenvalue $0$ and the kernel of $H_e$ is generated by $p^{-1}\partial_{\xi} U_e$. Moreover, the property that the range of $H_e$ is closed in $L^2(\R)\times\mathbb{T}^2$ can be proved in the same line of the proof of \cite[Lemma~4.1]{DHZ} by using \eqref{fv}. 

Now, for any $e\in\mathbb{S}$, $v\in H^2(\R\times\mathbb{T}^2)$, $\vartheta\in\R$ and $\eta\in\R^2$, define
\begin{align*}
K_e(v,\vartheta,\eta)=\vartheta\partial_{\xi} (V_e+v)&+2\nabla_y\partial_{\xi} (V_e+v)\cdot \eta +\frac{2p'}{p} \nabla_{x,y}(V_e+v)\cdot \eta\\
&+\frac{1}{p} f(y,p(V_e+v))-\frac{1}{p} f(y,V_e)+\Big(c_e\frac{p'}{p}+\frac{p''}{p}+\beta\Big) v,
\end{align*}
and
$$
G_e(v,\vartheta,\eta):=\left(v+M_e^{-1}(K_e(v,\vartheta,\eta)),\int_{\R^+\times\T^N} \left[(V_e(\xi,y)+v(\xi,y))^2-U_e^2(\xi,y)\right]dyd\xi\right).
$$
By following the proof of \cite[Lemma~2.10]{G}, one can get that for every $e\in\mathbb{S}$, the function $G_e:\ H^2(\R\times\T^N)\times\R\times\R^N\rightarrow D\times\R$ is continuous and it is continuously Fr\'{e}chet differentiable with respect to $(v,\vartheta)$ and doubly continuously Fr\'{e}chet differentiable with respect to $\eta$. For any $e\in\mathbb{S}^{N-1}$ and $(\tilde{v},\tilde{\vartheta})\in D\times\R$, define
\begin{align*}
Q_e(\tilde{v},\tilde{\vartheta})=\left(\tilde{v}+M_e^{-1}(\tilde{\vartheta}\partial_{\xi}V_e +f_u(y,U_e)\tilde{v} +\Big(c_e\frac{p'}{p}+\frac{p''}{p}+\beta\Big)\tilde{v}),
2\int_{\R^+\times\T^N} V_e(\xi,y)\tilde{v}(\xi,y)dyd\xi\right),
\end{align*}
which has the same form as $\partial_{(v,\vartheta)} G_e(0,0,0)$. By the properties of $H_e$ and the same line of the proofs of \cite[Lemma~3.3]{DHZ} and \cite[Lemma~2.11]{G}, one can get that $Q_e$ satisfies all properties in \cite[Lemma~2.11]{G}, such as invertibility and boundedness. 

As soon as we have all these properties of these operators, we can follow the same proof of \cite[Theorem~1.5]{G} to get that $V_b(\xi,x,y)=p^{-1}(\xi) U_b(\xi,x,y)$ is doubly Fr\'{e}chet differentiable at any $b\in\R^2\setminus\{0\}$. Moreover, $\|V_e'\|$ is bounded for any $e\in\mathbb{S}$.

Thus, by the definition of Fr\'{e}chet differentiation, we have
$$(U'_e\cdot h)(\cdot,\cdot,\cdot)=p(\xi) (V'_e\cdot h)(\cdot,\cdot,\cdot), \hbox{ for any $e\in\mathbb{S}$ and $h\in\R^2$}.$$
Therefore, there exist a positive constant $C_4$ such that
\be\label{U'e-asy}
|(U'_e\cdot h)(\xi,x,y)|\le p(\xi)\|V'_e\| |h|\le C_4 e^{-r\xi} |h| \hbox{ for $\xi\ge 0$, $(x,y)\in\mathbb{T}^2$ and $h\in\R^2$}.
\ee
By applying similar arguments to the other side, that is, $\xi<0$, one can also get that there are positive constants $C_5$ and $\mu_5$ such that
\be\label{U'e-asy2}
|(U'_e\cdot h)(\xi,x,y)|\le C_5 e^{\mu_5\xi} |h| \hbox{ for $\xi< 0$, $(x,y)\in\mathbb{T}^2$ and $h\in\R^2$}.
\ee

Lastly, we differentiate \eqref{Ue} at $e$ on $h\in\R^2$ and get that
\begin{align*}
&(c'_e\cdot h)\partial_{\xi} U_e +c_e\partial_{\xi} (U'_e\cdot h) +\partial_{\xi\xi} (U'_e\cdot h) +2\nabla_y\partial_{\xi} U_e\cdot (h-(e\cdot h)e)\nonumber\\
&+2\nabla_{x,y}\partial_{\xi} (U'_e\cdot h)\cdot e+\Delta_{x,y} (U'_e\cdot h) +f_u(x,y,U_e)(U'_e\cdot h)=0.
\end{align*}
By changing variables $\xi=(x,y)\cdot e-c_e t$, one has that $u(t,x):=(U'_e\cdot h)((x,y)\cdot e-c_e t,x,y)$ satisfies a parabolic equation
$$u_t-\Delta u=f_u(x,y, U_e)u+(c'_e\cdot h)\partial_{\xi} U_e +2\nabla_{x,y}\partial_{xi} U_e \cdot (h-(e\cdot h)e).$$
By parabolic estimates, Lemma~\ref{ASY2} and \eqref{U'e-asy}-\eqref{U'e-asy2}, one can get that there are positive constants $C_6$ and $\mu_6$ such that 
$$|u_t(t,x,y)|\le C_6 e^{-\mu_6 |(x,y)\cdot e-c_e t|}|h|,$$
that is,
$$|(\partial_{\xi}U'_e\cdot h) (\xi,x,y)|\le C_6 e^{-\mu_6|\xi|}|h| \hbox{ for any $h\in\R^2$, $\xi\in\R$ and $(x,y)\in\mathbb{T}^2$}.$$
This completes the proof.
\end{proof}

\subsection{Proof of Theorem~\ref{th1}}

Take any two angles $\alpha$, $\beta$ of $(0,\pi)$ such that \eqref{angle} holds. Let $\psi(x)$ be a smooth function satisfying Lemma~\ref{lemma-psi} for $\alpha$ and $\beta$. Take a constant $\varrho$ to be determined later.  For every point $(x,y)$ on the curve $y=\psi(\varrho x)/\varrho$, there is a unit normal
\be\label{e(x)}
e(x)=(e_1(x),e_2(x))=\Big(-\frac{\psi'(\varrho x)}{\sqrt{\psi'^2(\varrho x)+1}},\frac{1}{\sqrt{\psi'^2(\varrho x)+1}}\Big).
\ee
By Lemma~\ref{lemma-psi}, every component of $e(x)$ is differentiable with respect to $x$ and
$$e(x)\rightarrow (\cos\alpha,\sin\alpha) \hbox{ as $x\rightarrow -\infty$ and } e(x)\rightarrow (\cos\beta,\sin\beta) \hbox{ as $x\rightarrow +\infty$}.$$
Its derivatives can be denoted by 
$$e'(x)=(e_1'(x),e_2'(x))=\Big(-\frac{\varrho\psi''(\varrho x)}{(\psi'^2(\varrho x)+1)^{\frac{3}{2}}},-\frac{\varrho\psi'(\varrho x)\psi''(\varrho x)}{(\psi'^2(\varrho x)+1)^{\frac{3}{2}}}\Big),$$
and
\begin{align*}
e''(x)=(e_1''(x),e_2''(x))=&\Big(-\frac{\varrho^2\psi'''(\varrho x)}{(\psi'^2(\varrho x)+1)^{\frac{3}{2}}}+\frac{3\varrho^2\psi'(\varrho x)\psi''^2(\varrho x)}{(\psi'^2(\varrho x)+1)^{\frac{5}{2}}},\\
&-\frac{\varrho^2\psi''^2(\varrho x)}{(\psi'^2(\varrho x)+1)^{\frac{3}{2}}}-\frac{\varrho^2\psi'(\varrho x)\psi'''(\varrho x)}{(\psi'^2(\varrho x)+1)^{\frac{3}{2}}}+\frac{3\varrho^2\psi'^2(\varrho x)\psi''^2(\varrho x)}{(\psi'^2(\varrho x)+1)^{\frac{5}{2}}}\Big).
\end{align*}
Therefore, by Lemma~\ref{lemma-psi}, there exist $K_2>0$ and $K_3>0$ such that
\be\label{K2K3}
|e'(x)|\le \varrho K_2 \text{sech}(\gamma x) \hbox{ and } |e''(x)|\le \varrho^2 K_3 \text{sech}(\gamma x) \hbox{ for all $x\in\R$}.
\ee

Remember that $U^-_{\alpha\beta}(t,x,y)$ defined by \eqref{U-} is a subsolution of \eqref{eq1.1}. Now, take a positive constant $\varepsilon$ and we define
$$U^+(t,x,y)=U_{e(x)}(\xi(t,x,y),x,y)+\varepsilon \text{sech}(\varrho x),$$
where  
\be\label{xi}
\xi(t,x,y)=\frac{y-c_{\alpha\beta}t-\psi(\varrho x)/\varrho}{\sqrt{\psi'^2(\varrho x)+1}},
\ee 
and $c_{\alpha\beta}$ is defined by \eqref{angle}.
We prove that $U^+(t,x,y)$ is a supersolution of \eqref{eq1.1} for small $\varepsilon$ and $\varrho$.

\begin{lemma}\label{U+}
There exist $\varepsilon_0>0$ and $\varrho(\varepsilon_0)>0$ such that for any $0<\varepsilon\le \varepsilon_0$ and $0<\varrho\le \varrho(\varepsilon_0)$, the function $U^+(t,x,y)$ is a supersolution of \eqref{eq1.1} with $U^+_t>0$. Moreover, it satisfies
\be\label{U+cU-}
\lim_{R\rightarrow +\infty} \sup_{x^2+(y-c_{\alpha,\beta} t)^2>R^2} \Big| U^+(t,x,y)-U^-_{\alpha\beta}(t,x,y)\Big|\le 2\varepsilon,
\ee
and
\be\label{U+geU-}
U^+(t,x,y)\ge U^-_{\alpha\beta}(t,x,y), \hbox{ for all $t\in\R$ and $(x,y)\in\R^2$}.
\ee
\end{lemma}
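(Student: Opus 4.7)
The plan is to verify the four assertions of Lemma~\ref{U+} in order, with the supersolution inequality being the main analytic task. Set $W(t,x,y):=U_{e(x)}(\xi(t,x,y),x,y)$ and $A(x):=\sqrt{\psi'^2(\varrho x)+1}$, and write $V_\xi,V_{\xi\xi}$ for the $\xi$-derivatives of $U_{e(x)}$ evaluated at $(\xi,x,y)$. First compute $\partial_t U^+-\Delta U^+-f(x,y,U^+)$ by the chain rule, carefully tracking that both $\xi$ and $e$ depend on $x$, and then substitute the pulsating-front equation \eqref{Ue} at $e=e(x)$ to cancel the second-order Laplacian contribution. The outcome is
\[
\partial_tW-\Delta W-f(x,y,W)=-V_\xi\bigl(c_{\alpha\beta}/A(x)-c_{e(x)}\bigr)+\mathcal{E}_W,
\]
where $\mathcal{E}_W$ collects $V_{\xi\xi}(1-\xi_x^2-\xi_y^2)$, $-V_\xi(\xi_{xx}+\xi_{yy})$, $-2V_{\xi x}(\xi_x-e_1(x))$ and the terms in which $e'(x),e''(x)$ pair against the Fr\'echet derivatives $U'_{e(x)}, U''_{e(x)}$ and their $\xi$-derivatives. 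The $\varepsilon\text{sech}$ correction produces $R_\varepsilon:=-\varepsilon\varrho^2\text{sech}''(\varrho x)-\varepsilon f_u(x,y,W+\tau\varepsilon\text{sech}(\varrho x))\text{sech}(\varrho x)$ for some $\tau\in[0,1]$.

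The positive driver is $-V_\xi(c_{\alpha\beta}/A(x)-c_{e(x)})$. Since $e_2(x)=1/A(x)=\sin\theta(x)$, hypothesis \eqref{angle} gives $c_{\alpha\beta}\ge g(\theta(x))$, equivalently $c_{\alpha\beta}/A(x)\ge c_{e(x)}$, with equality only in the limits $x\to\pm\infty$. The strict signs $g'(\alpha)<0$ and $g'(\beta)>0$, combined with the two-sided $\text{sech}$ bounds on $\psi'+\cot\alpha$ and $-\cot\beta-\psi'$ from Lemma~\ref{lemma-psi}, promote this to the quantitative estimate $c_{\alpha\beta}/A(x)-c_{e(x)}\ge c_0\text{sech}(\varrho x)$ uniformly in $x$, for some $c_0>0$. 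Together with $-V_\xi\ge r>0$ on $\{|\xi|\le M\}$ from Lemma~\ref{lemma-ut'}, this yields a lower bound $rc_0\text{sech}(\varrho x)$ on the driver in that slab. All the terms in $\mathcal{E}_W$ are $O(\varrho\text{sech}(\varrho x))$ or $O(\varrho^2\text{sech}(\varrho x))$ after invoking \eqref{K2K3} and the uniform bounds on $U'_e,U''_e$ proved in the previous subsection.

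The supersolution estimate closes region-by-region: on $\{|\xi|\le M\}$, pick $\varrho$ small enough that the $\mathcal{E}_W$ error is a small fraction of $rc_0\text{sech}(\varrho x)$, and $\varepsilon_0$ small enough that $\varepsilon_0\|f_u\|_\infty\le rc_0/4$ absorbs $R_\varepsilon$; on $\{|\xi|>M\}$, Lemmas~\ref{ASY2} and~\ref{lemma-Ue'} give every term on the left-hand side an $e^{-\mu|\xi|}$ factor, while \eqref{lambda} yields $-f_u\ge\lambda/2$ once $\varepsilon_0$ is small, so $R_\varepsilon\ge(\varepsilon\lambda/2-\varepsilon\varrho^2)\text{sech}(\varrho x)$ dominates. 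The monotonicity $U^+_t=-c_{\alpha\beta}A(x)^{-1}V_\xi>0$ is immediate from $V_\xi<0$. For \eqref{U+cU-}, note that $e(x)\to e_\alpha$ and $\xi\to\xi_\alpha$ as $x\to-\infty$ (symmetrically at $+\infty$); continuity of $U_e$ in $e$, the fact that $U_\beta\to 0$ in the left regime and $U_\alpha\to 0$ in the right regime, and $\varepsilon\text{sech}(\varrho x)\to 0$ at infinity, together deliver the claim.

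The pointwise inequality \eqref{U+geU-} is where I expect the main obstacle. The plan is to show $U^+\ge U_\alpha$ and $U^+\ge U_\beta$ separately. For $x\le 0$, Lemma~\ref{lemma-psi} gives $\psi(\varrho x)/\varrho\ge -x\cot\alpha$, which rearranges to $\xi\le\xi_\alpha/(A(x)\sin\alpha)$; combining the monotonicity $\partial_\xi U_e<0$, the Fr\'echet estimate $|U_{e(x)}-U_\alpha|\le\|U'_e\|\,|e(x)-e_\alpha|=O(\text{sech}(\varrho x))$ from Lemma~\ref{lemma-Ue'}, and the exponential decay of $\partial_\xi U_{e(x)}$ from Lemma~\ref{ASY2}, I expect to obtain $W\ge U_\alpha(\xi_\alpha,x,y)-C_*\text{sech}(\varrho x)$ with $C_*$ proportional to the parameter $\rho$ used in the construction of $\psi$, hence absorbable by the $\varepsilon\text{sech}$ buffer once $\rho$ is chosen small enough relative to $\varepsilon_0$. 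The bound $U^+\ge U_\beta$ on the same region exploits that $\xi_\beta$ is large and positive on the bulk of $\{x\le 0\}$ because $\cos\beta-\cot\alpha\sin\beta<0$, so $U_\beta\le C_1 e^{-\mu_1\xi_\beta}$ is absorbed by $\varepsilon\text{sech}(\varrho x)$; the case $x\ge 0$ is symmetric. Reconciling the upper bound on $\varepsilon$ from the supersolution step with the lower bound $\varepsilon\ge C_*$ from absorbing the comparison loss is the delicate book-keeping, and it is what forces the nested order of smallness $0<\varrho\le\varrho(\varepsilon_0)$ in the statement.
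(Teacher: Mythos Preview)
Your treatment of the supersolution inequality (Step~1) and of the asymptotic comparison \eqref{U+cU-} (Step~2) is essentially the paper's argument and is correct in outline.

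The genuine gap is in your proposed proof of \eqref{U+geU-}. You claim that the comparison loss satisfies $W\ge U_\alpha(\xi_\alpha,x,y)-C_*\,\text{sech}(\varrho x)$ with $C_*$ \emph{proportional to the parameter $\rho$} from Lemma~\ref{lemma-psi}, so that it can be absorbed by the $\varepsilon\,\text{sech}$ buffer once $\rho$ is small. This is false. From the construction $\psi=\varphi+\rho\,\text{sech}$ one has $\psi'(0)+\cot\alpha=\varphi'(0)+\cot\alpha$ (since $\text{sech}'(0)=0$), and $\varphi'(0)$ is fixed by the incircle construction, strictly between $-\cot\alpha$ and $-\cot\beta$. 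Hence $K_1\ge\varphi'(0)+\cot\alpha$ is an $O(1)$ constant depending only on $\alpha,\beta$, not on $\rho$; correspondingly $|e(x)-e_\alpha|$ near $x=0$ is of order~$1$ (as it must be, since $e(x)$ interpolates between $e_\alpha$ and $e_\beta$). The Fr\'echet estimate therefore only gives $C_*=O(1)$, which cannot be absorbed by $\varepsilon\,\text{sech}(\varrho x)$ when $\varepsilon\le\varepsilon_0$ is small. The same objection applies to your claim that $\xi_\beta$ is large positive ``on the bulk'' of $\{x\le 0\}$: this holds only asymptotically as $x\to-\infty$, not near the vertex.

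The paper proves \eqref{U+geU-} by a completely different route, a sliding argument. One compares $U^+$ with the shifted front $u_\tau:=U_\alpha(\xi_\alpha+\tau,x,y)$: for $\tau$ large, the regions where $u_\tau$ and $U^+$ are away from $0$ and $1$ are separated, and the monotonicity of $f$ near $0$ and $1$ together with the supersolution property (via the argument of \cite[Lemma~4.2]{BH2}) yields $U^+\ge u_\tau$. One then slides $\tau$ down to $\tau_*:=\inf\{\tau:U^+\ge u_\tau\}$; the asymptotics force $\tau_*\ge 0$, and if $\tau_*>0$ a touching sequence combined with linear parabolic estimates for $v=U^+-u_{\tau_*}\ge 0$ (which satisfies $v_t-\Delta v\ge -b\,v$) and the strict positivity of $v$ far out on the $\alpha$-side gives a contradiction. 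This method never needs a quantitative pointwise bound between $U_{e(x)}$ and $U_\alpha$ near the vertex, which is precisely where your direct estimate breaks down.
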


\begin{proof}
We divide the proof into three steps. 

{\it Step 1: $U^+$ is a supersolution.}  We will pick $\varepsilon_0>0$ and $\varrho(\varepsilon)$ such that Lemma~\ref{U+} holds. Assume that
$$\varepsilon_0\le \frac{\sigma}{2},$$
where $\sigma>0$ is defined in (F3).
More restrictions on $\varepsilon_0$ will be given later. One can compute that
\begin{align*}
LU^+:=&U^+_t-\Delta_{x,y} U^+-f(x,y,U^+)\\
=& \partial_{\xi} U_{e(x)} \xi_t -\partial_{\xi\xi} U_{e(x)} (\xi_x^2+\xi_y^2) -2\nabla_{x,y}\partial_{\xi} U_{e(x)}\cdot (\xi_x,\xi_y) -\Delta_{x,y} U_{e(x)}-\partial_{\xi} U_{e(x)} \xi_{xx} \\
&-U''_{e(x)} \cdot e'(x)\cdot e'(x)-U'_{e(x)}\cdot e''(x)-2\partial_{\xi}U'_{e(x)}\cdot e'(x) \xi_x -2\partial_xU'_{e(x)}\cdot e'(x) \\
&-\varepsilon \varrho^2 \text{sech}''(\varrho x)-f(x,y,U^+),
\end{align*}
where $\partial_{\xi}U_{e(x)}$, $\partial_{\xi\xi} U_{e(x)}$, $\nabla_{x,y}\partial_{\xi} U_{e(x)}$, $\Delta_{x,y}U_{e(x)}$, $U''_{e(x)}\cdot e'(x)\cdot e'(x)$, $U'_{e(x)}\cdot e''(x)$, $\partial_{\xi}U'(e(x))\cdot e'(x)$, $\partial_x U'_{e(x)}\cdot e'(x)$ are taking values at $(\xi(t,x,y),x,y)$ and $U^+$, $\xi_t$, $\xi_x$, $\xi_y$ are taking values at $(t,x,y)$.
By \eqref{xi}, it follows from a direct computation that
\begin{eqnarray}\label{eq-xi}
\begin{aligned}
&\xi_t=-\frac{c_{\alpha\beta} }{\sqrt{\psi'^2(\varrho x)+1}},\\
&\xi_x=-\frac{\varrho\psi'(\varrho x)\psi''(\varrho x)}{(\psi'^2(\varrho x)+1)^{\frac{1}{2}}}\xi-\frac{\psi'(\varrho x)}{\sqrt{\psi'^2(\varrho x)+1}},\\
&\xi_y=\frac{1}{\sqrt{\psi'^2(\varrho x)+1}},\\
&\xi_{xx}=-\frac{\varrho^2\psi''^2(\varrho x)+\varrho^2\psi'(\varrho x)\psi''(\varrho x)}{\psi'^2(\varrho x)+1}\xi+\frac{3\varrho^2\psi'^2(\varrho x)\psi''^2(\varrho x)}{(\psi'^2(\varrho x)+1)^2}\xi +\frac{\varrho(\psi'^2(\varrho x)-1)\psi''(\varrho x)}{(\psi'^2(\varrho x)+1)^{\frac{3}{2}}},\\
&\xi^2_x+\xi^2_y-1=\left(\Big(\frac{\varrho\psi'(\varrho x)\psi''(\varrho x)}{(\psi'^2(\varrho x)+1)^{\frac{3}{2}}}\Big)^2\xi^2+2\Big(\frac{\varrho\psi'(\varrho x)\psi''(\varrho x)}{(\psi'^2(\varrho x)+1)^{\frac{3}{2}}}\Big)\frac{\psi'(\varrho x)}{\sqrt{\psi'^2(\varrho x)+1}}\xi\right).
\end{aligned}
\end{eqnarray}
By noticing that $\xi_y=e_2(x)$ and by \eqref{Ue}, one has
\begin{equation}\label{LU+}
\begin{aligned}
LU^+
=& (c_{e(x)}-\xi_t)\partial_{\xi} U_{e(x)}  -\partial_{\xi\xi} U_{e(x)} (\xi^2_x+\xi^2_y-1) -2\partial_x\partial_{\xi} U_{e(x)}(\xi_x-e_1(x))-\partial_{\xi} U_{e(x)} \xi_{xx} \\
&-U''_{e(x)} \cdot e'(x)\cdot e'(x) -U'_{e(x)}\cdot e''(x)-2\partial_{\xi}U'_{e(x)}\cdot e'(x) \xi_x -2\partial_xU'_{e(x)}\cdot e'(x) \\
&-\varepsilon \varrho^2 \text{sech}''(\varrho x)+f(x,y,U_{e(x)})-f(x,y,U^+),
\end{aligned}
\end{equation}
where $\partial_{\xi}U_{e(x)}$, $\partial_{\xi\xi} U_{e(x)}$, $\partial_{x}\partial_{\xi} U_{e(x)}$, $U''_{e(x)}\cdot e'(x)\cdot e'(x)$, $U'_{e(x)}\cdot e''(x)$, $\partial_{\xi}U'(e(x))\cdot e'(x)$, $\partial_x U'_{e(x)}\cdot e'(x)$, $U_{e(x)}$ are taking values at $(\xi(t,x,y),x,y)$ and $U^+$, $\xi_t$, $\xi_x$, $\xi_y$ are taking values at $(t,x,y)$. By Lemma~\ref{ASY2}, one has that $|\partial_{\xi\xi}U_{e(x)}\xi^2|$, $|\partial_{\xi\xi} U_{e(x)} \xi|$, $|\partial_{x}\partial_{\xi} U_{e(x)} \xi|$ and $|\partial_{\xi} U_{e(x)} \xi|$ are uniformly bounded for $\xi\in\R$, $(x,y)\in\R^2$. Then, by 
 Lemmas~\ref{lemma-psi} and \eqref{eq-xi}, there is $C_5>0$ such that
\be\label{C5}
|\partial_{\xi\xi} U_{e(x)} (\xi^2_x+\xi^2_y-1)| +2|\partial_x\partial_{\xi} U_{e(x)}(\xi_x-e_1(x))|+|\partial_{\xi} U_{e(x)} \xi_{xx}|\le C_5 \varrho\text{sech}(\varrho x).
\ee
Since $\|U'_{e}\|$, $\|U''_{e}\|$, $\|\partial_{\xi}U'_{e}\|$, $\|\partial_xU'_e\|$ are bounded and by Lemma~\ref{lemma-Ue'}, \eqref{K2K3}, there is $C_6>0$ such that
\be\label{C6}
|U''_{e(x)} \cdot e'(x)\cdot e'(x)|
+|U'_{e(x)}\cdot e''(x)|
+2|\partial_{\xi}U'_{e(x)}\cdot e'(x) \xi_x| +2|\partial_xU'_{e(x)}\cdot e'(x)|\le C_6\varrho\text{sech}(\varrho x).
\ee

We claim that
\begin{claim}\label{claim1}
There is $C_7>0$ such that 
\be\label{ineq-c}
\xi_t-c_{e(x)}=\frac{c_{\alpha\beta}}{\sqrt{\psi'^2(\varrho x)+1}}-c_{e(x)}\ge C_7\text{sech}(\varrho x)>0.
\ee
\end{claim}
We postpone the proof of this claim after the proof of this lemma.

Then, it follows from \eqref{LU+}, \eqref{C5}, \eqref{C6}, \eqref{ineq-c}, Lemma~\ref{lemma-sech} and $\partial_{\xi} U_e<0$ that
\be\label{LU+2}
\begin{aligned}
LU^+\ge & -\partial_{\xi} U_{e(x)} C_7\text{sech}(\varrho x) -(C_5+C_6)\varrho\text{sech}(\varrho x) -2\varepsilon \varrho^2 \text{sech}(\varrho x)\\
&+f(x,y,U_{e(x)})-f(x,y,U^+).
\end{aligned}
\ee
By Lemma~\ref{ASY}, there is $C>0$ such that 
\be\label{eq-C}
0<U_e(\xi,x,y)\le \frac{\sigma}{2} \hbox{ for $\xi\ge C$ and } 0<1-U_e(\xi,x,y)\le \frac{\sigma}{2} \hbox{ for $\xi\le -C$}.
\ee
uniformly for $(x,y)\in\mathbb{T}^2$ and $e\in\mathbb{S}$. Then, for $(t,x,y)\in\R\times\R^2$ such that $\xi(t,x,y)\ge C$ and $\xi(t,x,y)\le -C$ respectively, one has that $U^+(t,x,y)\le \sigma/2+\varepsilon\le \sigma$ and $U^+(t,x,y)\ge 1-\sigma/2$ respectively since $\varepsilon\le \varepsilon_0\le \sigma/2$ and hence, it follows from \eqref{lambda} that
\be\label{f1}
f(x,y,U_{e(x)})-f(x,y,U^+)\ge \lambda \varepsilon \text{sech}(\varrho x).
\ee
Since $\partial_{\xi} U_e<0$ and by \eqref{LU+2}, \eqref{f1}, one has that 
$$LU^+\ge \Big(-(C_5+C_6)\varrho-2\varepsilon \varrho^2+\lambda\varepsilon\Big)\text{sech}(\varrho x)\ge0,$$
by taking $0<\varrho\le \varrho(\varepsilon)$ where $\varrho(\varepsilon)>0$ is small enough such that
\be\label{de}
-(C_5+C_6)\varrho-2\varepsilon \varrho^2+\lambda\varepsilon>0, \hbox{ for all $0<\varrho\le \varrho(\varepsilon)$}.
\ee
Finally, for $(t,x,y)\in\R\times\R^2$ such that $-C\le \xi(t,x,y)\le C$, it follows from Lemma~\ref{lemma-ut'} that there is $k>0$ such that 
\be\label{eq-k}
-\partial_{\xi} U_e(\xi,x,y)\ge k \hbox{ for all $e\in\mathbb{S}$}.
\ee
Notice that 
\be\label{eq-M}
f(x,y,U_{e(x)})-f(x,y,U^+)\ge -M \varepsilon \text{sech}(\varrho x),
\ee
where $M:=\max_{(x,y,u)\in\mathbb{T}^2\times\R} |f_u(x,y,u)|$. Thus, it follows from \eqref{LU+2}, \eqref{de}, \eqref{eq-k} and \eqref{eq-M} that
$$LU^+\ge \Big(kC_7-(C_5+C_6)\varrho-2\varepsilon \varrho^2-M\varepsilon\Big)\text{sech}(\varrho x)\ge \Big(kC_7-(\lambda+M)\varepsilon\Big)\text{sech}(\varrho x)\ge 0,$$
by taking $\varepsilon_0=\min\{\sigma/2, kC_7/(\lambda+M)\}$ and $0<\varepsilon\le \varepsilon_0$.

Therefore, $LU^+\ge 0$ for all $t\in\R$ and $(x,y)\in\R^2$. By the comparison principle, $U^+(t,x,y)$ is a supersolution of \eqref{eq1.1}. The property $U^+_t>0$ comes from $\partial_{\xi}U_e<0$ and $c_{\alpha\beta}>0$.

{\it Step 2: the proof of \eqref{U+cU-}.} Since $e(x)\rightarrow (\cos\alpha,\sin\alpha)$ as $x\rightarrow -\infty$ and by the definition of $U'_{e}$, there is $R_1>0$ such that 
\begin{align}
|U_{e(x)}(\xi(t,x,y),x,y)-U_{\alpha}(\xi(t,x,y),x,y)|\le& \|U'_{\alpha}\| |e(x)-(\cos\alpha,\sin\alpha)|+o(|e(x)-(\cos\alpha,\sin\alpha)|)\nonumber\\
\le& \frac{\varepsilon}{2},\quad \hbox{ for $x\le -R_1$ and $t\in\R$, $y\in\R$}.\label{epsilon2-1}
\end{align}
Notice that $1/\sqrt{\psi'^2(\varrho x)+1}\rightarrow \sin\alpha$ as $x\rightarrow -\infty$ and $c_{\alpha\beta}\sin\alpha=c_{\alpha}$. Then, by Lemma~\ref{lemma-psi}, one has that
$$\xi(t,x,y)\rightarrow x\cos\alpha+y\sin\alpha-c_{\alpha}t, \hbox{ as $x\rightarrow -\infty$ for any $t\in\R$ and $y\in\R$}.$$
Thus, there is $R_2>0$ such that
$$|U_{\alpha}(\xi(t,x,y),x,y)-U_{\alpha}(x\cos\alpha+y\sin\alpha-c_{\alpha}t,x,y)|\le \frac{\varepsilon}{2}, \hbox{ for $x\le -R_2$ and $t\in\R$, $y\in\R$}.$$
Together with \eqref{epsilon2-1}, it follows that
\be\label{R2}
|U^+(t,x,y)-U_{\alpha}(x\cos\alpha+y\sin\alpha-c_{\alpha}t,x,y)|\le 2\varepsilon, \hbox{ for $x\le -\max\{R_1,R_2\}$ and $t\in\R$, $y\in\R$}.
\ee
Similarly, one can prove that there is $R_3>0$ such that
\be\label{R3}
|U^+(t,x,y)-U_{\beta}(x\cos\beta+y\sin\beta-c_{\beta}t,x,y)|\le 2\varepsilon, \hbox{ for $x\ge R_3$ and $t\in\R$, $y\in\R$}.
\ee

Now, for $-\max\{R_1,R_2\}\le x\le R_3$, we know that $\psi(\varrho x)$ and $\psi'(\varrho x)$ are bounded. Then, as $y-c_{\alpha\beta} t\rightarrow +\infty$, one has that 
$$\xi(t,x,y)\rightarrow +\infty \hbox{ and } x\cos\alpha+y\sin\alpha-c_{\alpha}t\rightarrow +\infty, \hbox{ for $-\max\{R_1,R_2\}\le x\le R_3$}.$$
Thus, there is $R_4>0$ such that 
$$0<U_{e(x)}(\xi(t,x,y),x,y)\le \frac{\varepsilon}{2},$$ 
and
$$0<U_{\alpha}(x\cos\alpha+y\sin\alpha-c_{\alpha}t,x,y),\ U_{\beta}(x\cos\beta+y\sin\beta-c_{\beta}t,x,y)\le \frac{\varepsilon}{2},$$ 
for $-\max\{R_1,R_2\}\le x\le R_3$ and $y-c_{\alpha\beta}t\ge R_4$. Hence, 
\be\label{R4}
|U^+(t,x,y)- U_{\alpha\beta}(t,x,y)|\le 2\varepsilon,
\ee
for $-\max\{R_1,R_2\}\le x\le R_3$ and $y-c_{\alpha\beta}t\ge R_4$. Similarly, since $U_{e(x)}(-\infty,x,y)=U_{\alpha}(-\infty,x,y)=1$ uniformly for $(x,y)\in\mathbb{T}^2$, there is $R_5$ such that
\be\label{R5}
|U^+(t,x,y)- U_{\alpha\beta}(t,x,y)|\le 2\varepsilon,
\ee
for $-\max\{R_1,R_2\}\le x\le R_3$ and $y-c_{\alpha\beta}t\le -R_5$.

On the other hand, for any fixed $r\in\R$ and any point $(t,x,y)\in\R\times\R^2$ such that $x\cos\alpha+y\sin\alpha-c_{\alpha}t=r$, one has that 
$$x\cos\beta+y\sin\beta-c_{\beta}t=x\frac{\sin(\alpha-\beta)}{\sin\alpha}+\frac{\sin\beta}{\sin\alpha} r\rightarrow +\infty, \hbox{ as $x\rightarrow -\infty$},$$
since $-\pi<\alpha-\beta<0$ and $c_{\alpha}/\sin\alpha=c_{\beta}\sin\beta$. It implies that $U_{\beta}(x\cos\beta+y\sin\beta-c_{\beta}t,x,y)\rightarrow 0$ as $x\rightarrow -\infty$ for $(t,x,y)\in\R\times\R^2$ such that $x\cos\alpha+y\sin\alpha-c_{\alpha}t=r$. While, $U_{\alpha}(r,x,y)>0$. Thus, there is $R_6>0$ such that
$$U^-_{\alpha\beta}(t,x,y)=U_{\alpha}(x\cos\alpha+y\sin\alpha-c_{\alpha}t), \hbox{ for $x\le -R_6$ and $t\in\R$, $y\in\R$}.$$
Similarly, there is $R_7>0$ such that
$$U^-_{\alpha\beta}(t,x,y)=U_{\beta}(x\cos\beta+y\sin\beta-c_{\beta}t), \hbox{ for $x\ge R_7$ and $t\in\R$, $y\in\R$}.$$
Then, by \eqref{R2}-\eqref{R5}, we have our conclusion \eqref{U+cU-}.

{\it Step 3: the proof of \eqref{U+geU-}.} We only have to prove that $U^+(t,x,y)\ge U_{\alpha}(x\cos\alpha+y\sin\alpha-c_{\alpha}t)$ and $U^+(t,x,y)\ge U_{\beta}(x\cos\beta+y\sin\beta-c_{\beta}t)$ for all $t\in\R$ and $(x,y)\in\R^2$.

Since $U_e(-\infty,x,y)=1$ and $U_e(+\infty,x,y)=0$ for any $(x,y)\in\mathbb{T}^2$ and $e\in\mathbb{S}$, there is $C>0$ such that
$$0<U_e(\xi,x,y)\le \sigma \hbox{ for $\xi\ge C$ and $(x,y)\in\mathbb{T}^2$},$$ 
and
$$1-\sigma\le U_e(\xi,x,y)<1 \hbox{ for $\xi\le -C$ and $(x,y)\in\mathbb{T}^2$},$$
where $\sigma$ is defined in (F3).
By \eqref{U+cU-} and letting $\varepsilon\le \sigma/4$, there is $R>0$ such that 
$$U^+(t,x,y)\le \sigma, \hbox{ for $(t,x,y)\in\Omega_R^+$ and } U^+(t,x,y)\ge 1-\sigma, \hbox{ for $(t,x,y)\in\Omega_R^-$},$$
where
\begin{align*}
\Omega_R^+:=\{(t,x,y)\in\R\times\R^2; x\le 0 \hbox{ and }& x\cos\alpha+y\sin\alpha-c_{\alpha}t\ge c_{\alpha}R\}\cup \{(t,x,y)\in\R\times\R^2;\\
 &x> 0 \hbox{ and } x\cos\beta+y\sin\beta-c_{\beta}t\ge c_{\beta}R\},
\end{align*}
and
\begin{align*}
\Omega_R^-:=\{(t,x,y)\in\R\times\R^2; x\le 0 \hbox{ and }& x\cos\alpha+y\sin\alpha-c_{\alpha}t\le -c_{\alpha}R\}\cup \{(t,x,y)\in\R\times\R^2;\\
 &x> 0 \hbox{ and } x\cos\beta+y\sin\beta-c_{\beta}t\le -c_{\beta} R\}.
\end{align*}
Notice that for any $t$, the boundaries of $\Omega^+_t$ and $\Omega^-_t$ are connected polylines since $c_{\alpha}/\sin\alpha=c_{\beta}/\sin\beta$.
By Lemma~\ref{lemma-ut'} and the definition of $U^+(t,x,y)$, there is $0<\sigma'\le \sigma$ such that
$$\sigma'\le U^+(t,x,y)\le 1-\sigma', \hbox{ for $(t,x,y)\in\R\times\R^2\setminus (\Omega_R^+\cup\Omega_R^-)$}.$$

For any $\tau\in\R$, let $u_{\tau}(t,x,y)=U_{\alpha}(x\cos\alpha+y\sin\alpha-c_{\alpha}t+\tau)$. Let 
\begin{align*}
\omega_{\tau}^+:=\{(t,x,y)\in\R\times\R^2;  x\cos\alpha+y\sin\alpha-c_{\alpha}t+\tau\ge C\},
\end{align*}
and
\begin{align*}
\omega_{\tau}^-:=\{(t,x,y)\in\R\times\R^2;  x\cos\alpha+y\sin\alpha-c_{\alpha}t+\tau\le -C\}.
\end{align*}
Notice that since $\alpha<\beta$, one has that
$$\{(t,x,y)\in\R\times\R^2; x\cos\alpha+y\sin\alpha-c_{\alpha}t\le -c_{\alpha}R\}\subset \Omega_R^-,$$
and
$$\Omega_R^+\subset \{(t,x,y)\in\R\times\R^2; x\cos\alpha+y\sin\alpha-c_{\alpha}t\ge c_{\alpha}R\}.$$
Thus,
\begin{align*}
\R\times\R^2\setminus (\omega^+_{\tau}\cup\omega^-_{\tau})\subset \Omega_{(C-\tau)/c_{\alpha}}^-\hbox{ and }
\R\times\R^2\setminus(\Omega_R^+\cup\Omega_R^-) \subset \omega_{C+c_{\alpha} R}^+.
\end{align*}
Then, by \eqref{U+cU-}, $U_e(-\infty,x,y)=1$ and $U_e(+\infty,x,y)=0$, there is $\tau_1\ge c_{\alpha}R+C$ large enough such that for any $\tau\ge \tau_1$,
$$U^+(t,x,y)\ge 1-\sigma'\ge u_{\tau}(t,x,y), \hbox{ for all $(t,x,y)\in\R\times\R^2\setminus (\omega^+_{\tau}\cup\omega^-_{\tau})$},$$
and
$$u_{\tau}(t,x,y)\le \sigma'\le U^+(t,x,y), \hbox{ for all $(t,x,y)\in\R\times\R^2\setminus (\Omega^+_{R}\cup\Omega^-_{R})$}.$$
Moreover, since $\tau\ge \tau_1\ge R+C$, one has that
$$U^+(t,x,y)\ge 1-\sigma\ge \sigma\ge u_{\tau}(t,x,y), \hbox{ for all $(t,x,y)\in \omega^+_{\tau}\cap\Omega_{R}^-$}.$$
Thus, it follows that
\be\label{tau-middle}
u_{\tau}(t,x,y)\le U^+(t,x,y), \hbox{ for any $\tau\ge \tau_1$ and all $(t,x,y)\in\R\times\R^2\setminus(\omega^-_{\tau}\cup\Omega^+_R)$}.
\ee

Also notice that 
$$u_{\tau}(t,x,y),\ U^+(t,x,y)\ge 1-\sigma \hbox{ in $\omega_{\tau}^-$ and } u_{\tau}(t,x,y),\ U^+(t,x,y)\le \sigma \hbox{ in $\Omega_R^+$},$$
and $f(x,y,u)$ is nonincreasing in $u\in (-\infty,\sigma]$ and $u\in [1-\sigma,+\infty)$ for any $(x,y)\in\mathbb{T}^2$ by \eqref{lambda}.
By following similar proof as the proof of \cite[Lemma~4.2]{BH2} which mainly applied the sliding method and the linear parabolic estimates, one can get that
$$U^+(t,x,y)\ge u_{\tau}(t,x,y), \hbox{ in $\omega_{\tau}^-$ and $\Omega_R^+$}.$$
Combined with \eqref{tau-middle}, one has that
$$U^+(t,x,y)\ge u_{\tau}(t,x,y), \hbox{ for any $\tau\ge \tau_1$ and all $(t,x,y)\in\R\times\R^2$}.$$

Now, we decrease $\tau$. Define
\begin{align*}
\tau_*=\inf\{\tau\in\R; U^+(t,x,y)\ge u_{\tau}(t,x,y) \hbox{ for all $(t,x,y)\in\R\times\R^2$}\}.
\end{align*}
From above arguments, one knows that $\tau_*<+\infty$.
Since $U^+(t,x,y)\rightarrow U_{\alpha}(x\cos\alpha+y\sin\alpha-c_{\alpha}t,x,y)$ as $x\rightarrow -\infty$, $U_{\alpha}(\xi,x,y)$ is decreasing in $\xi$ and by the definition of $u_{\tau}(t,x,y)$, one also knows that $\tau_*\ge 0$. Assume that $\tau_*>0$. If 
$$\inf\{U^+(t,x,y)-u_{\tau_*}(t,x,y); (t,x,y)\in \R\times\R^2\setminus (\omega^-_{\tau_*}\cup\Omega_R^+)\}>0, $$
then there is $\eta>0$ such that 
\begin{align*}
U^+(t,x,y)\ge u_{\tau_*-\eta}(t,x,y), &\hbox{ for $(t,x,y)\in \R\times\R^2\setminus (\omega^-_{\tau_*-\eta}\cup\Omega_R^+)$}.
\end{align*}
Then, one can apply the above arguments again and get that $U^+(t,x,y)\ge u_{\tau_*-\eta}(t,x,y)$ for all $(t,x,y)\in\R\times\R^2$ which contradicts the definition of $\tau_*$. Thus, 
$$\inf\{U^+(t,x,y)-u_{\tau_*}(t,x,y); (t,x,y)\in \R\times\R^2\setminus (\omega^-_{\tau_*}\cup\Omega_R^+)\}=0.$$
Since $\alpha<\beta$, there is a sequence $\{(t_n,x_n,y_n)\}_{n\in\mathbb{N}}$ in $\R\times\R^2\setminus (\omega^-_{\tau_*}\cup\Omega_R^+)$ such that 
$$-C-\tau_*\le x_n\cos\alpha+y_n\sin\alpha-c_{\alpha}t_n\le c_{\alpha} R,$$
and
$$U^+(t_n,x_n,y_n)-u_{\tau_*}(t_n,x_n,y_n)\rightarrow 0, \hbox{ as $n\rightarrow +\infty$}.$$
Then, there is $\xi_*\in\R$ such that 
$x_n\cos\alpha+y_n\sin\alpha-c_{\alpha}t_n\rightarrow \xi_*$ as $n\rightarrow +\infty$.
Since $U^+(t,x,y)\rightarrow U_{\alpha}(x\cos\alpha+y\sin\alpha-c_{\alpha}t,x,y)$ as $x\rightarrow -\infty$, $U^+(t,x,y)\rightarrow U_{\beta}(x\cos\beta+y\sin\beta-c_{\beta}t,x,y)$ as $x\rightarrow +\infty$ with $\alpha<\beta$ and $\tau_*>0$, one has that $x_n$ is bounded and there is $x_*\in\R$ such that $x_n\rightarrow x_*$ as $n\rightarrow +\infty$. Again by $U^+(t,x,y)\rightarrow U_{\alpha}(x\cos\alpha+y\sin\alpha-c_{\alpha}t,x,y)$ as $x\rightarrow -\infty$ and by \eqref{R2}, there is $R'>0$ such that 
\be\label{R'}
|U^+(t,x,y)- U_{\alpha}(x\cos\alpha+y\sin\alpha-c_{\alpha}t,x,y)|\le \varepsilon \hbox{ for $x\le -R'$ and $t\in\R$, $y\in\R$}.
\ee
Let $v(t,x,y)=U^+(t,x,y)-u_{\tau_*}(t,x,y)$. Then, $v(t,x,y)\ge 0$ and 
\be\label{vR'}
v(t,x,y)>0 \hbox{ for any $(t,x,y)\in\R\times\R^2$ such that $x\le -R$, $x\cos\alpha+y\sin\alpha-c_{\alpha}t=\xi_*$},
\ee
by \eqref{R'}, $\tau_*>0$ and taking $\varepsilon$ sufficiently small.
Since $U^+(t,x,y)$ is a supersolution and $u_{\tau_*}(t,x,y)$ is a solution of \eqref{eq1.1}, we have that $v(t,x,y)$ satisfies
$$v_t-\Delta v\ge -b(x,y)v, \hbox{ for $(t,x,y)\in\R\times\R^2$},$$
where $b(x,y)$ is bounded. Since $v(t_n,x_n,y_n)\rightarrow 0$ and by the linear parabolic estimates and $x_n$ is bounded, one gets that
$$v(t_n-1,x_n-R',y_n+\frac{R'\cos\alpha -c_{\alpha}}{\sin\alpha})\rightarrow 0 \hbox{ as $n\rightarrow +\infty$},$$
which contradicts \eqref{vR'}. Thus, $\tau_*=0$ and $U^+(t,x,y)\ge U_{\alpha}(x\cos\alpha+y\sin\alpha-c_{\alpha}t,x,y)$ for all $(t,x,y)\in\R\times\R^2$.

Similarly one can prove that $U^+(t,x,y)\ge U_{\beta}(x\cos\beta+y\sin\beta-c_{\beta}t,x,y)$ for all $(t,x,y)\in\R\times\R^2$. In conclusion, $U^+(t,x,y)\ge U^-_{\alpha\beta}(t,x,y)$ for all $(t,x,y)\in\R\times\R^2$.
\end{proof}
\vskip 0.3cm

\begin{proof}[Proof of Claim~\ref{claim1}]
Notice that 
$$-\frac{\psi'(\rho x)}{\sqrt{\psi'^2(\varrho x)+1}}=e_1(x) \hbox{ and } \frac{1}{\sqrt{\psi'^2(\varrho x)+1}}=e_2(x).$$
Let $\hat{\theta}(x)=\arccos e_1(x)$. By Lemma~\ref{lemma-psi}, one can get that $\alpha<\hat{\theta}(x)<\beta$ for all $x\in\R$ and $\hat{\theta}(-\infty)=\alpha$, $\hat{\theta}(+\infty)=\beta$. Then, $e(x)=(\cos\hat{\theta},\sin\hat{\theta})$ and
$$\frac{c_{e(x)}}{e_2(x)}=\frac{c_{\hat{\theta}}}{\sin\hat{\theta}}.$$
Thus,
$$\frac{c_{\alpha\beta}}{\sqrt{\psi'^2(\varrho x)+1}}-c_{e(x)}=\sin\hat{\theta}\left(c_{\alpha\beta}-\frac{c_{\hat{\theta}}}{\sin\hat{\theta}}\right).$$
Since $c_{\alpha\beta}>c_{\theta}/\sin\theta$ for any $\theta\in(\alpha,\beta)$ and $0<\min\{\sin\alpha,\sin\beta\}\le \sin\hat{\theta}\le 1$, one only has to prove that 
\be\label{c2}
c_{\alpha\beta}-\frac{c_{\hat{\theta}}}{\sin\hat{\theta}}\ge C_7\text{sech}(\varrho x), \hbox{ for some positive constant $C_7$ and when $|x|$ is large}.
\ee

We only consider when $x<0$ and similar arguments can be applied for $x>0$. Define
$$g(\theta)=\frac{c_{\theta}}{\sin\theta}, \hbox{ for $\theta\in(0,\pi)$}.$$
Obviously, $g(\theta)$ is a $C^2$ function since $c_{e}$ is doubly differentiable with respect to $e$. By \eqref{angle}, one has that $g'(\alpha)<0$. Since $\hat{\theta}(x)\rightarrow \alpha$ as $x\rightarrow -\infty$, it then follows that
\be\label{g'}
\frac{c_{\alpha}}{\sin\alpha}-\frac{c_{\hat{\theta}}}{\sin\hat{\theta}}=g'(\alpha)(\alpha-\hat{\theta}(x))+o(|\alpha-\hat{\theta}(x)|), \hbox{ for $x$ negative enough}.
\ee
Moreover, by \eqref{K_1}, one has that
\begin{align*}
\hat{\theta}(x)-\alpha=&\int_{-\infty}^{x} \hat{\theta}'(s)ds=- \int_{-\infty}^{x} \frac{e_1'(s)}{\sqrt{1-e_1^2(s)}}ds=\int_{\infty}^x \frac{\varrho\psi''(\varrho s)}{\psi'^2(\varrho s)+1} ds\\
\ge& \frac{1}{|\psi'|^2_{L^{\infty}}+1} (\psi'(\varrho x)+\cot\alpha)\ge \frac{k_1}{|\psi'|^2_{L^{\infty}}+1} \text{sech}(\varrho x).
\end{align*}
One then can conclude \eqref{c2} from \eqref{g'} for $x$ negative enough.
\end{proof}
\vskip 0.3cm

Now, we are ready to prove Theorem~\ref{th1}.

\begin{proof}[Proof of Theorem~\ref{th1}]
Let $u_n(t,x)$ be the solution of \eqref{eq1.1} for $t\ge -n$ with initial data
$$u_n(-n,x,y)=U^-_{\alpha\beta}(-n,x,y).$$
By Lemma~\ref{U+}, one can get from the comparison principle that
\be\label{U-U+}
U^-_{\alpha\beta}(t,x,y)\le u_n(t,x,y)\le U^+(t,x,y), \hbox{ for $t\ge -n$ and $(x,y)\in \R^2$}.
\ee
Since $U^-_{\alpha\beta}(t,x,y)$ is a subsolution, the sequence $u_n(t,x,y)$ is increasing in $n$. Letting $n\rightarrow +\infty$ and by parabolic estimates, the sequence $u_n(t,x,y)$ converges to an entire solution $V(t,x,y)$ of \eqref{eq1.1}. By \eqref{U-U+}, one has that
$$U^-_{\alpha\beta}(t,x,y)\le V(t,x,y)\le U^+(t,x,y), \hbox{ for $t\in\R$ and $(x,y)\in \R^2$}.$$
Then, it follows from Lemma~\ref{U+} that \eqref{Vf} holds. 

By $U_{\alpha\beta}^-(t,x,y)$ is increasing in $t$ and the maximum principle, one has that $(u_n)_t(t,x,y)>0$ for all $t\in(-n,+\infty)$ and $(x,y)\in\R^2$. By letting $n\rightarrow +\infty$ and the strong maximum principle, one concludes that $u_t(t,x,y)>0$ for all $t\in\R$ and $(x,y)\in\R^2$. This completes the proof.
\end{proof}

\subsection{Proofs of Corollaries~\ref{cor1}, \ref{cor2} and Theorem~\ref{th2}}

We then give some examples to show that Theorem~\ref{th1} is not empty, that is, Corollaries~\ref{cor1}, \ref{cor2}.

\begin{proof}[Proof of Corollary~\ref{cor1}]
Notice that $c_{\theta}$ and $c'_{\theta}$ are uniformly bounded for $\theta\in[0,\pi]$. Let $g(\theta):=c_{\theta}/\sin\theta$. Then,
$$g'(\theta)=\frac{c'_{\theta}\cdot (-\sin\theta,\cos\theta)}{\sin\theta}-\frac{c_{\theta}\cos\theta}{\sin^2\theta}.$$
Obviously, there are constants $0<\alpha_1<\beta_1<\pi$ such that $g'(\theta)<0$ for $\theta\in (0,\alpha_1)$ and $g'(\theta)>0$ $\theta\in(\beta_1,\pi)$ since $c'_e$ is bounded for any $e\in\mathbb{S}$ and $\sin\theta\rightarrow 0$ as $\theta\rightarrow 0$ or $\pi$. One can also notice that $g(\theta)\rightarrow +\infty$ as $\theta\rightarrow 0$ or $\theta\rightarrow \pi$. By continuity, one can take any $\alpha\in(0,\alpha_1)$ and there is $\beta\in(\beta_1,\pi)$ such that $g(\alpha)=g(\beta)$ and $g(\theta)<g(\alpha)=g(\beta)$ for all $\theta\in(\alpha,\beta)$.

Then, the conclusion of Corollary~\ref{cor1} follows from Theorem~\ref{th1}.
\end{proof}
\vskip 0.3cm

\begin{proof}[Proof of Corollary~\ref{cor2}]
Take two directions $e_1=(\cos\theta_1,\sin\theta_1)$ and $e_2=(\cos\theta_2,\sin\theta_2)$ where $\theta_1$, $\theta_2\in (0,2\pi)$. Assume without loss of generality that $\theta_2>\theta_1$. Rotate the coordinate by changing variables as 
\begin{eqnarray*}
\left\{\begin{array}{lll}
X=x\cos\theta+y\sin\theta,&&\\
Y=-x\sin\theta+y\cos\theta,&&
\end{array}
\right.
\end{eqnarray*}
where $\theta$ varies from $\theta_2-\pi/2$ to $\theta_1+\pi/2$. Let $u(t,x,y)$ be the solution of \eqref{eq1.1} and let $v(t,X,Y)=u(t,x,y)=u(t,X\cos\theta-Y\sin\theta,X\sin\theta+Y\cos\theta)$. Then, $v(t,X,Y)$ satisfies
\be\label{vXY}
v_t-\Delta_{X,Y} v=g(X,Y,u),\quad\,(t,X,Y)\in\R\times\R^2,
\ee
where $g(X,Y,v)=f(X\cos\theta-Y\sin\theta,X\sin\theta+Y\cos\theta,v)$. Notice that $g(X,Y,v)$ satisfies (F1)-(F3). 

Assume without loss of generality that $\theta_2-\theta_1<\pi$. Otherwise, if $\theta_2-\theta_1>\pi$, we can take $\theta$ varying from $\theta_2$ to $2\pi+\theta_1$. Then, under the new coordinate, directions $e_1$ and $e_2$ become $(\cos(\theta_1+\pi/2-\theta),\sin(\theta_1+\pi/2-\theta))$ and $(\cos(\theta_2+\pi/2-\theta),\sin(\theta_2+\pi/2-\theta))$ where $0<\theta_1+\pi/2-\theta<\theta_2+\pi/2-\theta<\pi$. Since $\sin\theta$ is increasing in $[0,\pi/2]$ and decreasing in $[\pi/2,\pi]$, one has that
$$\frac{c_{e_1}}{\sin(\theta_1+\pi/2-\theta)} \hbox{ is increasing from $\frac{c_{e_1}}{\sin(\theta_1-\theta_2+\pi)}$ to $+\infty$ as $\theta$ varies from $\theta_2-\pi/2$ to $\theta_1+\pi/2$},$$
and
$$\frac{c_{e_2}}{\sin(\theta_2+\pi/2-\theta)} \hbox{ is decreasing from $+\infty$ to $\frac{c_{e_2}}{\sin(\theta_2-\theta_1)}$ as $\theta$ varies from $\theta_2-\pi/2$ to $\theta_1+\pi/2$}.$$
By continuity and for any $0<\theta_2-\theta_1<\pi$, there is $\theta^*$ such that
$$\frac{c_{e_1}}{\sin(\theta_1+\pi/2-\theta^*)}=\frac{c_{e_2}}{\sin(\theta_2+\pi/2-\theta^*)}.$$
On the other hand, by the proof of Corollary~\ref{cor1}, there is $0<\alpha_1<\pi$ small enough such that for $0<\pi-(\theta_2-\theta_1)<\alpha_1$, it holds
$$\frac{c_{e_1}}{\sin(\theta_1+\pi/2-\theta^*)}=\frac{c_{e_2}}{\sin(\theta_2+\pi/2-\theta^*)}>\frac{c_{\theta}}{\sin(\theta-\theta^*)} \hbox{ for $\theta_1+\pi/2<\theta<\theta_1+\pi/2$}.$$
Then, it follows from Corollary~\ref{cor1} that there is an entire solution $V(t,X,Y)$ of \eqref{vXY} satisfying  \eqref{Ve1e2}. By taking $\rho=\cos(\pi-\alpha_1)-1$ and $e_0=(\cos\theta^*,\sin\theta^*)$, the conclusion of Corollary~\ref{cor2} immediately follows.
\end{proof}
\vskip 0.3cm

Now, we show that condition \eqref{angle} without $g'(\alpha)<0$ and $g'(\beta)>0$ is necessary for the existence of the curved front in Theorem~\ref{th1}.

\begin{proof}[Proof of Theorem\ref{th2}]
We first prove that 
\be\label{ca=cb}
\frac{c_{\alpha}}{\sin\alpha}=\frac{c_{\beta}}{\sin\beta}.
\ee
Assume by contradiction that $c_{\alpha}/\sin\alpha\neq c_{\beta}/\sin\beta$. Take a sequence $\{t_n\}_{n\in\mathbb{N}}$ such that $t_n\rightarrow +\infty$. Then, for the sequence 
$$(x_n,y_n)=\Big(\frac{(c_{\alpha}\sin\beta-c_{\beta}\sin\alpha)t_n}{\sin(\beta-\alpha)},\frac{(c_{\alpha}\cos\beta-c_{\beta}\cos\alpha)t_n}{\sin(\alpha-\beta)}\Big),$$
one has that $x_n^2+(y_n-c_{\alpha\beta}t_n)^2\rightarrow +\infty$ as $n\rightarrow +\infty$ for any $c_{\alpha\beta}\in\R$ since $c_{\alpha}/\sin\alpha\neq c_{\beta}/\sin\beta$. Notice that for any $n$, there are $k^1_n$, $k^2_n\in\mathbb{Z}$ and $x'_n$, $y'_n\in [0,L_2)$ such that $x_n=k^1_n L_1+x'_n$ and $y_n=k^2_n L_2+y'_n$. Moreover, up to extract subsequences of $x_n$ and $y_n$, there are $x'_*\in [0,L_1]$ and $y'_*\in [0,L_2]$ such that $x_n'\rightarrow x'_*$ and $y'_n\rightarrow y'_*$ as $n\rightarrow +\infty$. Since $f(x,y,\cdot)$ is $L$-periodic in $(x,y)$, one has $f(x+x_n,y+y_n,\cdot)\rightarrow f(x+x'_*,y+y'_*,\cdot)$ as $n\rightarrow +\infty$. Let $v_n(t,x,y)=V(t+t_n,x+x_n,y+y_n)$. By standard parabolic estimates, $v_n(t,x,y)$, up to extract of a subsequence, converges to a solution $v_{\infty}(t,x,y)$ of 
\be\label{eq-v1}
v_t-\Delta v=f(x+x'_*,y+y'_*,v), \hbox{ for $t\in\R$ and $(x,y)\in\R^2$}.
\ee
By definitions of $x_n$ and $y_n$, one can also have that
$$U^-_{\alpha\beta}(t+t_n,x+x_n,y+y_n)\rightarrow \hat{U}^-_{\alpha\beta}(t,x,y), \hbox{ as $n\rightarrow +\infty$ uniformly in $\R\times\R^2$},$$
where
$$\hat{U}^-_{\alpha\beta}(t,x,y):=\max\{U_\alpha(x\cos\alpha+y\sin\alpha-c_{\alpha} t,x+x'_*,y+y'_*),U_\beta(x\cos\beta+y\sin\beta-c_{\beta} t,x+x'_*,y+y'_*)\}.$$ 
Moreover, by \eqref{Vf} and $x_n^2+(y_n-c_{\alpha\beta} t_n)^2\rightarrow +\infty$ as $n\rightarrow +\infty$, one gets that
$$v_n(t,x,y)\rightarrow \hat{U}^-_{\alpha\beta}(t,x,y) \hbox{ as $n\rightarrow +\infty$ locally uniformly in $\R\times\R^2$}.$$
It implies that $v_{\infty}(t,x,y)=\hat{U}^-_{\alpha\beta}(t,x,y)$ which is impossible since $\hat{U}^-_{\alpha\beta}(t,x,y)$ is not a solution of \eqref{eq-v1}. Therefore, \eqref{ca=cb} holds.

Then, we prove that 
\be\label{cab=ca}
c_{\alpha\beta}=\frac{c_{\alpha}}{\sin\alpha}=\frac{c_{\beta}}{\sin\beta}.
\ee
Assume by contradiction that $c_{\alpha\beta}\neq c_{\alpha}/\sin\alpha$. Take a sequence $(t_n)_{n\in\mathbb{N}}$ such that $t_n=L_2 n\sin\alpha/c_{\alpha}  \rightarrow +\infty$ and consider the sequence
$$(x_n,y_n)=(0,\frac{c_{\alpha}}{\sin\alpha}t_n).$$
Notice that $x_n^2+(y_n-c_{\alpha\beta}t_n)^2\rightarrow +\infty$ as $n\rightarrow +\infty$ since $c_{\alpha\beta}\neq c_{\alpha}/\sin\alpha$, $t_nc_{\alpha}/\sin\alpha=nL_2$ and $U^-_{\alpha\beta}(t+t_n,x+x_n,y+y_n)=U^-_{\alpha\beta}(t,x,y)$. Then, one can make the similar arguments as above to get a contradiction. Thus, \eqref{cab=ca} holds.

At last, we prove that
$$\frac{c_{\theta}}{\sin\theta}<c_{\alpha\beta}=\frac{c_{\alpha}}{\sin\alpha}=\frac{c_{\beta}}{\sin\beta} \hbox{ for any $\theta\in(\alpha,\beta)$}.$$
Assume by contradiction that there is $\theta\in(\alpha,\beta)$ such that $c_{\theta}/\sin\theta\ge c_{\alpha\beta}$. Then, two cases may occur: (i) $c_{\theta}/\sin\theta> c_{\alpha\beta}$; (ii) $c_{\theta}/\sin\theta= c_{\alpha\beta}$.

For case (i), take $t=0$ and by \eqref{Vf}, for any $\varepsilon>0$, there is $R_{\varepsilon}>0$ such that 
\be\label{Re}
\sup_{|(x,y)|>R_{\varepsilon}} \Big| V(0,x,y)-U^-_{\alpha\beta}(0,x,y)\Big|\le \varepsilon.
\ee 
We claim that 

\begin{claim}\label{claim-th2}
There exist constants $\tau\in\R$ and $\delta>0$ such that
$$V(t,x,y)\ge U_{\theta}(x\cos\theta+y\sin\theta -c_{\theta} t+\tau,x,y)-\delta e^{-\delta t}, \hbox{ for $t\ge 0$ and $x\in\R^2$}.$$
\end{claim}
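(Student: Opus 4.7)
The plan is to establish Claim~\ref{claim-th2} by a Fife--McLeod type argument: first set up an initial pointwise lower bound for $V(0,\cdot)$ in terms of a suitably shifted profile $U_\theta$, then propagate this bound to $t>0$ via an exponentially decaying subsolution.

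For the initial comparison, fix $\delta_0>0$ small (exact smallness condition imposed during the subsolution step) and apply \eqref{Re} with $\varepsilon=\delta_0/4$ to get $R_\varepsilon$ such that $|V(0,x,y)-U^-_{\alpha\beta}(0,x,y)|\le\varepsilon$ for $|(x,y)|\ge R_\varepsilon$. By Lemma~\ref{ASY} applied to $U_\theta,U_\alpha,U_\beta$, there are thresholds $\xi_+$ and $\tilde\xi_-$ with $U_\theta(\xi,\cdot)\le\delta_0$ for $\xi\ge\xi_+$ and $U_\alpha(\xi,\cdot),U_\beta(\xi,\cdot)\ge 1-\delta_0/4$ for $\xi\le\tilde\xi_-$. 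The key geometric input is that $\alpha<\theta<\beta$ in $(0,\pi)$ forces
\[
(\cos\theta,\sin\theta)=\nu_1(\cos\alpha,\sin\alpha)+\nu_2(\cos\beta,\sin\beta),\qquad \nu_1=\tfrac{\sin(\beta-\theta)}{\sin(\beta-\alpha)}>0,\ \nu_2=\tfrac{\sin(\theta-\alpha)}{\sin(\beta-\alpha)}>0,
\]
so that $x\cos\theta+y\sin\theta=\nu_1(x\cos\alpha+y\sin\alpha)+\nu_2(x\cos\beta+y\sin\beta)$ and, in particular, $\min(x\cos\alpha+y\sin\alpha,\,x\cos\beta+y\sin\beta)\le(x\cos\theta+y\sin\theta)/(\nu_1+\nu_2)$. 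I then choose $\tau^*$ so large that $\tau^*\ge R_\varepsilon+|\xi_+|$ and $(\xi_+-\tau^*)/(\nu_1+\nu_2)\le\tilde\xi_-$, and split $\R^2$ into the region $\{x\cos\theta+y\sin\theta+\tau^*\ge\xi_+\}$, where $U_\theta-\delta_0\le 0\le V(0,\cdot)$, and its complement, which lies outside $B_{R_\varepsilon}$ and by the decomposition satisfies $\min(x\cos\alpha+y\sin\alpha,x\cos\beta+y\sin\beta)\le\tilde\xi_-$, so $U^-_{\alpha\beta}(0,\cdot)\ge 1-\delta_0/4$ and $V(0,\cdot)\ge 1-\delta_0/2\ge U_\theta-\delta_0$. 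This produces the initial bound $V(0,x,y)\ge U_\theta(x\cos\theta+y\sin\theta+\tau^*,x,y)-\delta_0$ on all of $\R^2$.

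For the propagation step, I define
\[
w(t,x,y):=U_\theta\bigl(x\cos\theta+y\sin\theta-c_\theta t+\tau^*+\sigma_0(1-e^{-\omega t}),x,y\bigr)-\delta_0 e^{-\omega t}
\]
with $0<\omega<\lambda$ and $\sigma_0>0$ to be fixed. Using \eqref{Ue} to eliminate the Laplacian, direct differentiation gives
\[
w_t-\Delta w-f(x,y,w)=\sigma_0\omega e^{-\omega t}\partial_\xi U_\theta+\delta_0\omega e^{-\omega t}+f(x,y,U_\theta)-f(x,y,U_\theta-\delta_0 e^{-\omega t}).
\]
In the stable region $U_\theta\in[0,\sigma]\cup[1-\sigma,1]$, $\partial_\xi U_\theta\le 0$ and \eqref{lambda} together give $\sigma_0\omega e^{-\omega t}\partial_\xi U_\theta\le 0$ and $f(x,y,U_\theta)-f(x,y,U_\theta-\delta_0 e^{-\omega t})\le-\lambda\delta_0 e^{-\omega t}$, so the expression is $\le(\omega-\lambda)\delta_0 e^{-\omega t}\le 0$. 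In the intermediate region $U_\theta\in[\sigma,1-\sigma]$, Lemma~\ref{lemma-ut'} yields $\partial_\xi U_\theta\le-r<0$ while Lipschitzness of $f$ in $u$ with constant $M$ bounds the difference of $f$'s by $M\delta_0 e^{-\omega t}$; the expression is then $\le(-\sigma_0\omega r+(M+\omega)\delta_0)e^{-\omega t}$, which is $\le 0$ once $\sigma_0\ge(M+\omega)\delta_0/(\omega r)$. Thus $w$ is a subsolution on $t\ge 0$ and, by construction, $w(0,\cdot)\le V(0,\cdot)$; the parabolic comparison principle yields $V\ge w$ for all $t\ge 0$. Setting $\tau:=\tau^*+\sigma_0$ and using monotonicity of $U_\theta$ in $\xi$ together with $\sigma_0 e^{-\omega t}\ge 0$ gives
\[
w(t,x,y)\ge U_\theta(x\cos\theta+y\sin\theta-c_\theta t+\tau,x,y)-\delta_0 e^{-\omega t}.
\]
Choosing $\delta_0\le\omega$ and setting $\delta:=\delta_0$ makes $\delta_0 e^{-\omega t}\le\delta e^{-\delta t}$ for $t\ge 0$, so $V(t,x,y)\ge U_\theta(x\cos\theta+y\sin\theta-c_\theta t+\tau,x,y)-\delta e^{-\delta t}$, proving the claim.

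The main obstacle is the initial comparison at $t=0$: the decomposition of $e_\theta$ as a positive combination of $e_\alpha$ and $e_\beta$ is what lets the transition strip of the shifted $U_\theta$ be pushed into the region where $U^-_{\alpha\beta}(0,\cdot)$ is uniformly close to $1$, so that \eqref{Re} suffices outside a large ball and the trivial bound $V\ge 0$ suffices inside. The subsequent subsolution calculus is standard Fife--McLeod, relying on the bistability hypothesis near the equilibria and on the uniform positivity of $-\partial_\xi U_\theta$ in the transition zone provided by Lemma~\ref{lemma-ut'}.
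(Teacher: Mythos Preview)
Your proof is correct and follows essentially the same Fife--McLeod scheme as the paper: an initial comparison at $t=0$ exploiting $\alpha<\theta<\beta$, followed by a drifting-profile subsolution $U_\theta(\cdot+\text{shift}(t))-\delta_0 e^{-\omega t}$ and comparison. Your phrasing of the geometric step through the positive decomposition $e_\theta=\nu_1 e_\alpha+\nu_2 e_\beta$ is more explicit than the paper's (which simply asserts the half-plane inclusion $\{\xi^-(0,\cdot)\le C\}\subset\Omega$ ``since $\alpha<\theta<\beta$''), but the content is identical.

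One minor slip to fix: in the stable-zone estimate you need $-f_u\ge\lambda$ on the \emph{whole interval} $[U_\theta-\delta_0 e^{-\omega t},\,U_\theta]$, and when $U_\theta\in[1-\sigma,1]$ this interval can dip below $1-\sigma$, so \eqref{lambda} is not directly applicable. Partition instead by $U_\theta\in[0,\sigma/2]\cup[1-\sigma/2,1]$ with $\delta_0\le\sigma/2$ (or, equivalently, by $|\xi|\ge C$ versus $|\xi|\le C$ as the paper does), so that the subtraction keeps both values inside the region where \eqref{lambda} holds; the corresponding intermediate zone $U_\theta\in[\sigma/2,1-\sigma/2]$ is still contained in a bounded $\xi$-interval, so Lemma~\ref{lemma-ut'} continues to supply the uniform lower bound on $-\partial_\xi U_\theta$ that the drift term needs.
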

In order to not lengthen the proof, we postpone the proof of this claim after the proof of Theorem~\ref{th2}. Take a sequences $(t_n)_{n\in\mathbb{N}}$ such that $t_n\rightarrow +\infty$ as $n\rightarrow +\infty$ and $y_n=c_{\alpha\beta} t_n +R$ where $R$ is a constant. Then, since $U_e(+\infty,x,y)=0$ for all $e\in\mathbb{S}$ and $(x,y)\in\mathbb{T}^2$, one can take $R$ large enough such that 
\begin{align*}
U^-_{\alpha\beta}(t_n,0,y_n)=&\max\{U_{\alpha}(y_n\sin\alpha-c_{\alpha}t_n,0,y_n),U_{\beta}(y_n\sin\beta-c_{\beta}t_n,0,y_n)\}\\
=&\max\{U_{\alpha}(R\sin\alpha,0,y_n),U_{\beta}(R\sin\beta,0,y_n)\}\le \frac{1}{4}.
\end{align*}
By \eqref{Vf} and even if it means increasing $R$, one has that 
\be\label{vle}
V(t_n,0,y_n)\le U^-_{\alpha\beta}(t_n,0,y_n)+\frac{1}{4}\le \frac{1}{2} \hbox{ for all $n$}.
\ee
However, since $c_{\theta}/\sin\theta>c_{\alpha\beta}$ and hence,
$$y_n\sin\theta-c_{\theta} t_n=(c_{\alpha\beta}\sin\theta-c_{\theta})t_n+R\sin\theta\rightarrow -\infty, \hbox{ as $n\rightarrow +\infty$},$$
it follows from Claim~\ref{claim-th2} that 
$$V(t_n,0,y_n)\ge U_{\theta}(y_n\sin\theta-c_{\theta} t_n+\tau,0,y_n)\rightarrow 1 \hbox{ as $n\rightarrow +\infty$},$$
which contradicts \eqref{vle}. Case (i) is ruled out.

Now we consider case (ii). Since $U_e(-\infty,x,y)=1$ and $U_e(+\infty,x,y)=0$ for any $(x,y)\in\mathbb{T}^2$ and $e\in\mathbb{S}$, there is $C>0$ such that
$$0<U_e(\xi,x,y)\le \sigma \hbox{ for $\xi\ge C$ and $(x,y)\in\mathbb{T}^2$},$$ 
and
$$1-\sigma\le U_e(\xi,x,y)<1 \hbox{ for $\xi\le -C$ and $(x,y)\in\mathbb{T}^2$},$$
where $\sigma$ is defined in (F3).
By \eqref{Vf}, there is $R>0$ such that 
$$V(t,x,y)\le \sigma, \hbox{ for $(t,x,y)\in\Omega_R^+$ and } V(t,x,y)\ge 1-\sigma, \hbox{ for $(t,x,y)\in\Omega_R^-$},$$
where
\begin{align*}
\Omega_R^+:=\{(t,x,y)\in\R\times\R^2; x\le 0 \hbox{ and }& x\cos\alpha+y\sin\alpha-c_{\alpha}t\ge c_{\alpha}R\}\cup \{(t,x,y)\in\R\times\R^2;\\
 &x> 0 \hbox{ and } x\cos\beta+y\sin\beta-c_{\beta}t\ge c_{\beta}R\},
\end{align*}
and
\begin{align*}
\Omega_R^-:=\{(t,x,y)\in\R\times\R^2; x\le 0 \hbox{ and }& x\cos\alpha+y\sin\alpha-c_{\alpha}t\le -c_{\alpha}R\}\cup \{(t,x,y)\in\R\times\R^2;\\
 &x> 0 \hbox{ and } x\cos\beta+y\sin\beta-c_{\beta}t\le -c_{\beta}R\}.
\end{align*}
By a similar proof as of Lemma~\ref{lemma-ut'}, there is $0<\sigma'\le \sigma$ such that
$$\sigma'\le V(t,x,y)\le 1-\sigma', \hbox{ for $(t,x,y)\in\R\times\R^2\setminus (\Omega_R^+\cup\Omega_R^-)$}.$$

For any $\tau\in\R$, let $u_{\tau}(t,x,y)=U_{\theta}(x\cos\theta+y\sin\theta -c_{\theta} t+\tau,x,y)$.  Let 
\begin{align*}
\omega_{\tau}^+:=\{(t,x,y)\in\R\times\R^2;  x\cos\theta+y\sin\theta-c_{\theta}t+\tau\ge C\},
\end{align*}
and
\begin{align*}
\omega_{\tau}^-:=\{(t,x,y)\in\R\times\R^2;  x\cos\theta+y\sin\theta-c_{\theta}t+\tau\le -C\}.
\end{align*}
Since $\alpha<\theta<\beta$ and $c_{\theta}/\sin\theta=c_{\alpha}/\sin\alpha=c_{\beta}/\sin\beta$, one can easily check that 
\begin{align*}
\R\times\R^2\setminus (\omega^+_{\tau}\cup\omega^-_{\tau})\subset \Omega_{(C-\tau)/c_{\alpha}}^-\hbox{ and }
\R\times\R^2\setminus(\Omega_R^+\cup\Omega_R^-) \subset \omega_{C+c_{\alpha}R}^+.
\end{align*}
Then, by \eqref{Vf}, $U_e(-\infty,x,y)=1$ and $U_e(+\infty,x,y)=0$, there is $\tau_1\ge c_{\alpha}R+C$ large enough such that for any $\tau\ge \tau_1$,
$$V(t,x,y)\ge 1-\sigma'\ge u_{\tau}(t,x,y), \hbox{ for all $(t,x,y)\in\R\times\R^2\setminus (\omega^+_{\tau}\cup\omega^-_{\tau})$},$$
and
$$u_{\tau}(t,x,y)\le \sigma'\le V(t,x,y), \hbox{ for all $(t,x,y)\in\R\times\R^2\setminus (\Omega^+_{R}\cup\Omega^-_{R})$}.$$
Moreover, since $\tau\ge \tau_1\ge c_{\alpha}R+C$, one has that
$$V(t,x,y)\ge 1-\sigma\ge \sigma\ge u_{\tau}(t,x,y), \hbox{ for all $(t,x,y)\in \omega^+_{\tau}\cap\Omega_{R}^-$}.$$
Thus, it follows that
\be\label{tau-middle-2}
u_{\tau}(t,x,y)\le V(t,x,y), \hbox{ for any $\tau\ge \tau_1$ and all $(t,x,y)\in\R\times\R^2\setminus(\omega^-_{\tau}\cup\Omega^+_R)$}.
\ee
Also notice that 
$$u_{\tau}(t,x,y),\ V(t,x,y)\ge 1-\sigma \hbox{ in $\omega_{\tau}^-$ and } u_{\tau}(t,x,y),\ V(t,x,y)\le \sigma \hbox{ in $\Omega_R^+$},$$
and $f(x,y,u)$ is nonincreasing in $u\in (-\infty,\sigma]$ and $u\in [1-\sigma,+\infty)$ for any $(x,y)\in\mathbb{T}^2$ by \eqref{lambda}.
By following similar proof as the proof of \cite[Lemma~4.2]{BH2} which mainly applied the sliding method and the linear parabolic estimates, one can get that
$$V(t,x,y)\ge u_{\tau}(t,x,y), \hbox{ in $\omega_{\tau}^-$ and $\Omega_R^+$}.$$
Combined with \eqref{tau-middle-2}, one has that
$$V(t,x,y)\ge u_{\tau}(t,x,y), \hbox{ for any $\tau\ge \tau_1$ and all $(t,x,y)\in\R\times\R^2$}.$$

Let 
$$\tau_*=\inf\{\tau\in\R; u_{\tau}(t,x,y)\le V(t,x,y) \hbox{ for all $(t,x,y)\in\R\times\R^2$}\}.$$
By above arguments, one knows that $\tau_*<+\infty$. On the other hand, for any fixed $(t,x,y)$, $u_{\tau}(t,x,y)=U_{\theta}(x\cos\theta+y
\sin\theta-c_{\theta} t +\tau,x,y)\rightarrow 1$ as $\tau\rightarrow -\infty$ and $V(t,x,y)<1$ by the maximum principle. By the definition of $\tau_*$, one also has that $\tau_*>-\infty$. Thus, $|\tau_*|$ is bounded.
If 
$$\inf\{V(t,x,y)-u_{\tau_*}; (t,x,y)\in \R\times\R^2\setminus (\omega^-_{\tau_*}\cup\Omega_R^+)\}>0,$$
there is $\eta>0$ such that 
$$V(t,x,y)\ge u_{\tau_*-\eta}(t,x,y), \hbox{ for $(t,x,y)\in \R\times\R^2\setminus (\omega^-_{\tau_*}\cup\Omega_R^+)$}.$$
Then, one can follow above arguments again to get that
$$u_{\tau_*-\eta}(t,x,y)\le V(t,x,y), \hbox{ for $(t,x,y)\in\R\times\R^2$},$$
which contradicts the definition of $\tau_*$. Thus,
$$\inf\{V(t,x,y)-u_{\tau_*}; (t,x,y)\in \R\times\R^2\setminus (\omega^-_{\tau_*}\cup\Omega_R^+)\}=0.$$
Since $V(t,x,y)\ge \sigma'$ in $\R\times\R^2\setminus (\omega^-_{\tau_*}\cup\Omega_R^+)$ and $u_{\tau_*}(t,x,y)=U_{\theta}(x\cos\theta+y\sin\theta-c_{\theta}t+\tau_*,x,y)\rightarrow 0$ as $x\cos\theta+y\sin\theta-c_{\theta}t\rightarrow +\infty$,
there is $R_1>0$ and there is a sequence $\{(t_n,x_n,y_n)\}_{n\in\mathbb{N}}$ in $\R\times\R^2\setminus (\omega^-_{\tau_*}\cup\Omega_R^+)$ such that
\be\label{theta-tau}
-C-\tau_*\le x_n\cos\theta+y_n\sin\theta -c_{\theta} t_n\le R_1
\ee
and
\be\label{vrU}
V(t_n,x_n,y_n)-U_{\theta}(x_n\cos\theta+y_n\sin\theta -c_{\theta} t_n+\tau_*,x_n,y_n)\rightarrow 0, \hbox{ as $n\rightarrow +\infty$}.
\ee
Notice that $x_n$ is bounded. Otherwise, if $x_n\rightarrow -\infty$ as $n\rightarrow +\infty$, then it follows from \eqref{theta-tau} and $\theta<\alpha$ that
\begin{align*}
x_n\cos\alpha+y_n\sin\alpha-c_{\alpha} t_n&=x_n\cos\alpha+\sin\alpha\Big(y_n-\frac{c_{\alpha}}{\sin\alpha} t_n\Big)\\
&=x_n\cos\alpha+\sin\alpha\Big(y_n-\frac{c_{\theta}}{\sin\theta} t_n\Big)\\
&=\Big(\cos\alpha-\frac{\sin\alpha\cos\theta}{\sin\theta}\Big)x_n+\sin\alpha\sin\theta\Big(\cos\theta x_n+ \sin\theta y_n- c_{\theta} t_n\Big)\\
&\rightarrow +\infty,\quad\quad \hbox{ as $n\rightarrow +\infty$},
\end{align*}
and $x_n^2+(y_n-c_{\alpha\beta} t_n)^2\rightarrow +\infty$ as $n\rightarrow +\infty$. It implies that $V(t_n,x_n,y_n)\rightarrow U^-_{\alpha\beta}(t_n,x_n,y_n)\rightarrow 1$ as $n\rightarrow +\infty$ which contradicts  $u_{\tau_*}(t,x,y)\le 1-\sigma'$ in $\R\times\R^2\setminus \omega^-_{\tau_*}$ and \eqref{vrU}. Similarly, it is not possible that $x_n\rightarrow +\infty$ as $n\rightarrow +\infty$. Thus, there is $x_*\in\R$ such that $x_n\rightarrow x_*$ as $n\rightarrow +\infty$.
Let $w(t,x,y)=V(t,x,y)-u_{\tau_*}(t,x,y)$. Then, by \eqref{vrU}, $w(t_n,x_n,y_n)\rightarrow 0$ as $n\rightarrow +\infty$. Consider the point $(t_n-1,x_n-R',y_n-c_{\theta}/\sin\theta+R'\cos\theta/\sin\theta)$ for some constant $R'$. Notice that by \eqref{theta-tau},
$$(x_n-R')\cos\theta+(y_n-c_{\theta}/\sin\theta+R'\cos\theta/\sin\theta)\sin\theta - c_{\theta} (t_n-1)\in [-C-\tau_*,R_1],\hbox{ for any $n$},$$
and 
$$(x_n-R')\cos\alpha+(y_n-c_{\theta}/\sin\theta+R'\cos\theta/\sin\theta)\sin\alpha - c_{\alpha} (t_n-1)\rightarrow +\infty, \hbox{ as $R'\rightarrow +\infty$},$$
for any $n$.
 By taking $R'$ large enough, one can let 
$$V(t_n-1,x_n-R',y_n-c_{\theta}+R'\cos\theta/\sin\theta)> 1-\sigma', \hbox{ for any $n$}.$$
Then, by noticing that $(t_n-1,x_n-R,y_n-c_{\theta}+R\cos\theta/\sin\theta)$ satisfies \eqref{theta-tau} and hence $u_{\tau_*}(t_n-1,x_n-R,y_n-c_{\theta}+R\cos\theta/\sin\theta)\le 1-\sigma'$, one has that
\be\label{eq-wn}
w(t_n-1,x_n-R,y_n-c_{\theta}+R\cos\theta/\sin\theta)>0.
\ee
However, since $V(t,x,y)$ and $u_{\tau_*}(t,x,y)$ are solutions of \eqref{eq1.1}, we have that $w(t,x,y)$ satisfies
$$w_t-\Delta w\ge -b(x,y) w, \hbox{ for $(t,x,y)\in\R\times\R^2$},$$
where $b(x,y)$ is bounded. 
By the linear parabolic estimates, one can get that
$$w(t_n-1,x_n-R,y_n-c_{\theta}+R\cos\theta/\sin\theta)\rightarrow 0, \hbox{ as $n\rightarrow +\infty$},$$
which contradicts \eqref{eq-wn}. Therefore, case (ii) is ruled out.

In conclusion, $c_{\theta}/\sin\theta<c_{\alpha\beta}$ for any $\theta\in (\alpha,\beta)$.
\end{proof}
\vskip 0.3cm

We finish this section by proving Claim~\ref{claim-th2}.

\begin{proof}[Proof of Claim~\ref{claim-th2}]
Take $\delta>0$ such that 
$$\delta\le \min\left\{\frac{\sigma}{2},\lambda\right\},$$
where $\sigma$ and $\lambda$ are defined in (F3). Since $U_{\theta}(-\infty,x,y)=1$ and $U_{\theta}(+\infty,x,y)=0$ for $(x,y)\in\mathbb{T}^2$, there is $C>0$ such that 
\begin{eqnarray*}
\left\{\begin{array}{lll}
0<U_{\theta}(\xi,x,y)\le \delta, &&\hbox{ for $\xi\ge C$ and $(x,y)\in\mathbb{T}^2$}\\  
1-\delta\le U_{\theta}(\xi,x,y)< 1, &&\hbox{ for $\xi\le -C$ and $(x,y)\in\mathbb{T}^2$}.
\end{array}
\right.
\end{eqnarray*}
From Lemma~\ref{lemma-ut'}, there is $k>0$ such that $-\partial_{\xi}U_{\theta}(\xi,x,y)\ge k$ for $-C\le \xi\le C$ and $(x,y)\in\mathbb{T}^2$. 
Take $\omega>0$ such that
\be\label{omega}
k\omega\ge \delta +M,
\ee
where $M=\max_{(x,y,u)\in\mathbb{T}^2\times\R} |f_u(x,y,u)|$. It follows from \eqref{Re} and the definition of $U^-_{\alpha\beta}$ that there is $R_{\delta}>0$ such that
$$
V(0,x,y)\ge 1-\delta, \hbox{ for $(x,y)\in\Omega$}$$
where 
$$\Omega:=\{x\le 0, y\in\R; x\cos\alpha+y\sin\alpha\le -R_{\delta}\}\cup \{x\ge 0, y\in\R; x\cos\beta+y\sin\beta\le -R_{\delta}\}.$$

Define
$$v^-(t,x,y)=U_{\theta}(\xi^-(t,x,y),x,y)-\delta e^{-\delta t},$$
where
$$\xi^-(t,x,y)=x\cos\theta+y\sin\theta-c_{\theta} t-\omega e^{-\delta t}+\omega+\hat{R}_{\delta}+C,$$
and $\hat{R}_{\delta}=R_{\delta} \sin\theta \max\{1/\sin\alpha,1/\sin\beta\}$.
We prove that $v^-(t,x,y)$ is a subsolution of the problem satisfied by $V(t,x,y)$ for $t\ge 0$ and $(x,y)\in\R^2$.

Firstly, we check the initial data. Since $\alpha<\theta<\beta$, one has that
$$\{(x,y)\in\R^2; \xi^-(0,x,y)\le C\}\subset \Omega.$$
Then,
$$v^-(0,x,y)\le 1-\delta\le V(0,x,y), \hbox{ for $(x,y)\in\R^2$ such that $\xi(0,x,y)\le C$}.$$
For $(x,y)\in\R^2$ such that $\xi(0,x,y)\ge C$, one has that
$$v^-(0,x,y)\le \delta-\delta=0\le V(0,x,y).$$
Thus, $v^-(0,x,y)\le V(0,x,y)$ for all $(x,y)\in\R^2$.

We then check that
$$Nv:=v^-_t-\Delta v^- -f(x,y,v^-)\le 0, \hbox{ for $t\ge 0$ and $(x,y)\in\R^2$}.$$
By some computation and \eqref{Ue}, one has that
\be\label{c-nv}
Nv=\omega\delta e^{-\delta t} \partial_{\xi} U_{\theta}+\delta^2 e^{-\delta t} +f(x,y,U_{\theta})-f(x,y,v^-).
\ee
For $t\ge 0$ and $(x,y)\in\R^2$ such that $\xi(t,x,y)\ge C$, one has that $0<U_{\theta}(\xi(t,x,y),x,y)\le \delta$ and hence $v^-(t,x,y)\le 2\delta\le \sigma$. Thus, by \eqref{lambda}, it follows that
\be\label{c-f1}
f(x,y,U_{\theta})-f(x,y,v^-)\le -\lambda\delta e^{-\delta t}.
\ee
Since $\partial_{\xi} U_{\theta}<0$, it follows from \eqref{c-nv} and \eqref{c-f1} that
\begin{align*}
Nv\le \delta^2 e^{-\delta t} -\lambda\delta e^{-\delta t}\le 0.
\end{align*}
Similarly, one can prove that $Nv\le 0$ for $t\ge 0$ and $(x,y)\in\R^2$ such that $\xi(t,x,y)\le -C$.
Finally, for $t\ge 0$ and $(x,y)\in\R^2$ such that $-C\le \xi(t,x,y)\le C$, one has that $-\partial_{\xi}U_{\theta}(\xi(t,x,y),x,y)\ge k$ and
\be\label{c-f2}
f(x,y,U_{\theta})-f(x,y,v^-)\le M\delta e^{-\delta t},
\ee
where $M=\max_{(x,y,u)\in\mathbb{T}^2\times\R} |f_u(x,y,u)|$. Then, it follows from \eqref{omega}, \eqref{c-nv} and \eqref{c-f2} that
\begin{align*}
Nv\le -k\omega\delta e^{-\delta t} +\delta^2 e^{-\delta t} +M\delta e^{-\delta t}\le 0.
\end{align*}

By the comparison principle, one gets that
$$V(t,x,y)\ge v^-(t,x,y), \hbox{ for $t\ge 0$ and $x\in\R^2$}.$$
Then, the conclusion of Claim~\ref{claim-th2} follows immediately.
\end{proof}
\vskip 0.3cm

\section{Uniqueness and stability of the curved front}

This section is devoted to the proofs of uniqueness and stability of the curved front in Theorem~\ref{th1}, that is, Theorems~\ref{th3} and \ref{th4}.

\subsection{Proof of Theorem~\ref{th3}}

The idea of the proof of the uniqueness is inspired by Berestycki and Hamel \cite{BH2} who proved that for any two almost-planar fronts $u_1(t,x,y)$ and $u_2(t,x,y)$, there is $T\in\R$ such that either $u_1(t+T,x,y)>u_2(t,x,y)$ or $u_1(t+T,x,y)=u_2(t,x,y)$.
\vskip 0.3cm

\begin{proof}[Proof of Theorem~\ref{th3}]
Assume that there is another curved front $V^*(t,x,y)$ satisfying \eqref{Vf}. By \eqref{Vf}, there is $R>0$ large enough such that
$$
V(t,x,y),\ V^*(t,x,y)\le \sigma \hbox{ for $(t,x,y)\in\omega^+$ and } V(t,x,y),\ V^*(t,x,y)\ge 1-\sigma \hbox{ for $(t,x,y)\in\omega^-$},
$$
where $\sigma$ is defined in (F3),
\begin{align*}
\omega_t^+:=\{(t,x,y)\in\R\times\R^2; x\le 0 \hbox{ and }& x\cos\alpha+y\sin\alpha-c_{\alpha}t\ge c_{\alpha} R\}\cup \{(t,x,y)\in\R\times\R^2;\\
 &x> 0 \hbox{ and } x\cos\beta+y\sin\beta-c_{\beta}t\ge c_{\beta}R\},
\end{align*}
and
\begin{align*}
\omega_t^-:=\{(t,x,y)\in\R\times\R^2; x\le 0 \hbox{ and }& x\cos\alpha+y\sin\alpha-c_{\alpha}t\le -c_{\alpha} R\}\cup \{(t,x,y)\in\R\times\R^2;\\
 &x> 0 \hbox{ and } x\cos\beta+y\sin\beta-c_{\beta}t\le -c_{\beta}R\}.
\end{align*}
Since $c_{\alpha}/\sin\alpha=c_{\beta}/\sin\beta$, one knows that $\omega_t^+$ and $\omega_t^-$ are connected. 
By a similar proof as of Lemma~\ref{lemma-ut'}, there is $0<\sigma'\le \sigma$ such that
$$\sigma'\le V(t,x,y),\ V^*(t,x,y)\le 1-\sigma', \hbox{ in $\R\times\R^2\setminus (\omega_t^+\cup \omega_t^-)$}.$$
Then, by taking $\tau$ large enough, one has
\begin{align*}
V^*(t-\tau,x,y)\le \sigma' \le V(t,x,y), &\hbox{ for $(t,x,y)\in\R\times\R^2\setminus (\omega_t^+\cup \omega_t^-)$},
\end{align*}
and
\begin{align*}
V(t,x,y)\ge 1-\sigma'\ge V^*(t-\tau,x,y), &\hbox{ for $(t,x,y)\in\R\times\R^2\setminus (\omega_{t-\tau}^+\cup\omega_{t-\tau}^-)$},
\end{align*}
It means that
$$V^*(t-\tau,x,y)\le V(t,x,y), \hbox{ in $\overline{\R\times\R^2\setminus (\omega_{t-\tau}^+\cup \omega_t^-)}$}.$$
Since $f(x,y,u)$ is nonincreasing in $u\in (-\infty,\sigma]$ and $u\in [1-\sigma,+\infty)$ for $(x,y)\in\mathbb{T}^2$ and by the same line of the proof of \cite[Lemma~4.2]{BH2}, one can get that
$$V^*(t-\tau,x,y)\le V(t,x,y), \hbox{ for all $(t,x,y)\in\omega^-_{t-\tau}$ and $(t,x,y)\in\omega^+_{t}$}.$$
and hence,
$$V^*(t-\tau,x,y)\le V(t,x,y), \hbox{ for all $(t,x,y)\in\R\times\R^2$ and $\tau$ large enough}.$$
Now, we decrease $\tau$ and let
$$\tau_*=\inf\{\tau\in\R; V^*(t-\tau,x,y)\le V(t,x,y), \hbox{ for $(t,x,y)\in\R\times\R^2$}\}.$$
Since both $V(t,x,y)$ and $V^*(t,x,y)$ satisfy \eqref{Vf}, one knows that $\tau_*\ge 0$.
Assume that $\tau_*>0$. If 
$$\inf\{V(t,x,y)-V^*(t-\tau_*,x,y); (t,x,y)\in \R\times\R^2\setminus (\omega_{t-\tau_*}^+\cup \omega_t^-)\}>0,$$
then there is $\eta>0$ such that
$$V^*(t-(\tau_*-\eta),x,y)\le V(t,x,y), \hbox{ for $(t,x,y)\in \R\times\R^2\setminus (\omega_{t-\tau_*}^+\cup \omega_t^-)$}.$$
By applying above arguments again, one can get that
$$V^*(t-(\tau_*-\eta),x,y)\le V(t,x,y), \hbox{ for $(t,x,y)\in\R\times\R^2$},$$
which contradicts the definition of $\tau_*$. Thus, 
$$\inf\{V(t,x,y)-V^*(t-\tau_*,x,y); (t,x,y)\in \R\times\R^2\setminus (\omega_{t-\tau_*}^+\cup \omega_t^-)\}=0, $$
and there is a sequence $\{(t_n,x_n,y_n)\}_{n\in\mathbb{N}}$ such that 
$$V(t_n,x_n,y_n)-V^*(t_n-\tau_*,x_n,y_n)\rightarrow 0, \hbox{ as $n\rightarrow +\infty$}.$$
Then, by following similar arguments as Step~3 of the proof of Lemma~\ref{U+}, one can get a contradiction. Thus, $\tau_*=0$.

Therefore, 
$$V(t,x,y)\ge V^*(t,x,y), \hbox{ for all $(t,x,y)\in\R\times\R^2$}.$$
The same arguments can be applied by changing positions of $V(t,x,y)$ and $V^*(t,x,y)$, and then, we can get that
$$V^*(t,x,y)\ge V(t,x,y), \hbox{ for all $(t,x,y)\in\R\times\R^2$}.$$
In conclusion, $V^*(t,x,y)\equiv V(t,x,y)$.
\end{proof}

\subsection{Stability of the curved front}
Take any $0<\alpha<\beta<\pi$ such that Theorem~\ref{th1} holds. Since $g'(\alpha)<0$, one can take $\alpha_1\in(0,\alpha)$ such that 
$$\frac{c_{\theta}}{\sin\theta}>\frac{c_{\alpha}}{\sin\alpha}, \hbox{ for $\theta\in [\alpha_1,\alpha]$}.$$
Similar as Lemma~\ref{lemma-psi}, there is a smooth function $\varphi_1(x)$ with $y=-x\cot\alpha$ and $y=-x\cot \alpha_1$ being its asymptotic lines and there are positive constant $k_3$, $k_4$ and $K_4$ such that
\begin{eqnarray}\label{K_4}
\left\{\begin{array}{lll}
\varphi_1''(x)<0, &&\hbox{ for all $x\in\R$},\\
-\cot\alpha>\varphi_1'(x)>-\cot\alpha_1, &&\hbox{ for all $x\in\R$},\\
k_1\text{sech}(x)\le -\cot\alpha-\varphi_1'(x)\le K_4 \text{sech}(x), &&\hbox{ for $x<0$},\\
k_2\text{sech}(x)\le \varphi_1'(x)+\cot\alpha_1\le K_4 \text{sech}(x), &&\hbox{ for $x\ge 0$},\\
\max(|\varphi_1''(x)|,|\varphi_1'''(x)|)\le K_4 \text{sech}(x), &&\hbox{ for all $x\in\R$}.
\end{array}
\right.
\end{eqnarray}
Take a constant $\varrho$ which will be determined later.  For every point $(x,y)$ on the curve $y=\varphi_1(\varrho x)/\varrho$, there is a unit normal
$$e(x)=(e_1(x),e_2(x))=\Big(-\frac{\varphi_1'(\varrho x)}{\sqrt{\varphi_1'^2(\varrho x)+1}},\frac{1}{\sqrt{\varphi_1'^2(\varrho x)+1}}\Big).
$$

For $(x,y)\in\R^2$ and $t\in\R$, take a constant $\varepsilon$ and we define
$$U_{1}^-(t,x,y)=U_{e(x)}(\underline{\xi}(t,x,y),x,y)+\varepsilon \text{sech}(\varrho x),$$
where  
$$
\underline{\xi}(t,x,y)=\frac{y-c_{\alpha\beta}t-\varphi_1(\varrho x)/\varrho}{\sqrt{\varphi_1'^2(\varrho x)+1}}.
$$

\begin{lemma}\label{U1-}
There exist $\varepsilon_0$ and $\varrho(\varepsilon_0)$ such that for any $0<\varepsilon\le \varepsilon_0$ and $0<\varrho\le \varrho(\varepsilon_0)$, the function $U_1^-(t,x,y)$ is a subsolution of \eqref{eq1.1}. Moreover, it satisfies
\be\label{U1-leU-}
\lim_{R\rightarrow +\infty} \sup_{x<-R} \Big| U_1^-(t,x,y)-U_{\alpha}(x\cos\alpha+y\sin\alpha-c_{\alpha}t,x,y)\Big|\le 2\varepsilon,
\ee
and
\be\label{U1-le2e}
U_1^-(t,x,y)\le U_{\alpha}(x\cos\alpha+y\sin\alpha-c_{\alpha}t,x,y), \hbox{ for all $t\in\R$ and $(x,y)\in\R^2$}.
\ee
\end{lemma}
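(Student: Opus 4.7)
\textbf{Proof plan for Lemma~\ref{U1-}.} My strategy is to mirror the three-step architecture of Lemma~\ref{U+}, replacing the convex curve $\psi$ by the concave curve $\varphi_1$ and the angle $\beta$ by $\alpha_1$, and tracking how the reversed inequality $c_\theta/\sin\theta>c_{\alpha\beta}$ on $[\alpha_1,\alpha]$ flips the sign of the decisive term. I first compute $LU_1^-:=(U_1^-)_t-\Delta U_1^- - f(x,y,U_1^-)$ exactly as in equation \eqref{LU+} with $\xi$, $\psi$, $e(x)$ replaced by their new $\varphi_1$-based analogues; the formal output is identical up to these substitutions. The higher-order terms $-\partial_{\xi\xi}U_{e(x)}(\underline{\xi}_x^2+\underline{\xi}_y^2-1)$, $\partial_x\partial_\xi U_{e(x)}(\underline{\xi}_x-e_1(x))$, $\partial_\xi U_{e(x)}\underline{\xi}_{xx}$, and the $U'_{e(x)}$, $U''_{e(x)}$ terms are controlled by $C\varrho\,\text{sech}(\varrho x)$ exactly as in \eqref{C5}--\eqref{C6}, using Lemmas~\ref{lemma-psi}, \ref{ASY2}, \ref{lemma-Ue'} and the analogous bounds \eqref{K_4} for $\varphi_1$.

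The heart of the matter is a reversed version of Claim~\ref{claim1}: since $\hat\theta(x)\in(\alpha_1,\alpha)$ and by the choice of $\alpha_1$ we have $c_{\hat\theta}/\sin\hat\theta>c_{\alpha\beta}$, I expect
\[
\frac{c_{\alpha\beta}}{\sqrt{\varphi_1'^2(\varrho x)+1}}-c_{e(x)}=\sin\hat\theta\Bigl(c_{\alpha\beta}-\tfrac{c_{\hat\theta}}{\sin\hat\theta}\Bigr)\le -C_7\,\text{sech}(\varrho x)
\]
for some $C_7>0$. The proof runs exactly as in Claim~\ref{claim1}, but now exploits $g'(\alpha)<0$ with $\hat\theta(x)<\alpha$ (so $g(\hat\theta)-g(\alpha)$ is positive and, by Taylor, bounded below by a positive multiple of $\alpha-\hat\theta(x)$), combined with the lower bound $\alpha-\hat\theta(x)\ge k\,\text{sech}(\varrho x)$ that is obtained from \eqref{K_4} by the same integration of $\hat\theta'(x)$ as before. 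The sign flip in this inequality propagates into the key term of $LU_1^-$: multiplied by $\partial_\xi U_{e(x)}<0$ it now contributes $\le-|\partial_\xi U_{e(x)}|\,C_7\,\text{sech}(\varrho x)$, which pushes $LU_1^-$ downward rather than upward.

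With this in hand, I choose the sign of $\varepsilon$ so that the bistable contribution $f(x,y,U_{e(x)})-f(x,y,U_1^-)$ also works in favor of $LU_1^-\le 0$ in the stable outer zones $|\underline{\xi}|\ge C$ (using \eqref{lambda}), and in the inner zone $|\underline{\xi}|\le C$ it is dominated by the $O(|\partial_\xi U_{e(x)}|\,C_7\,\text{sech})\ge kC_7\,\text{sech}$ lower bound coming from Lemma~\ref{lemma-ut'}; this is the step where I expect I may need to reconcile the sign conventions in the displayed form of $U_1^-$, and it is the main delicate point of the argument. After $\varepsilon_0$ and $\varrho(\varepsilon_0)$ are fixed to make these estimates close, the comparison principle yields the subsolution property.

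Finally, \eqref{U1-leU-} follows exactly as in Step~2 of Lemma~\ref{U+}: as $x\to-\infty$, $e(x)\to(\cos\alpha,\sin\alpha)$ and $\underline{\xi}(t,x,y)\to x\cos\alpha+y\sin\alpha-c_\alpha t$ (since $c_{\alpha\beta}\sin\alpha=c_\alpha$), so by continuity of $e\mapsto U_e$ and Lemma~\ref{lemma-Ue'} one obtains $|U_{e(x)}(\underline{\xi},x,y)-U_\alpha(x\cos\alpha+y\sin\alpha-c_\alpha t,x,y)|\le \varepsilon$ for $x\le -R$, and the sech correction adds at most $\varepsilon$ more. For the global inequality \eqref{U1-le2e} I adapt Step~3 of Lemma~\ref{U+} as a sliding argument in reverse, comparing $U_1^-$ with translates $u_\tau(t,x,y)=U_\alpha(x\cos\alpha+y\sin\alpha-c_\alpha t+\tau,x,y)$: start from $\tau$ very negative (where $u_\tau$ is close to $1$ and dominates $U_1^-$), slide $\tau$ upward, and use the standard sliding/maximum-principle argument of \cite[Lemma~4.2]{BH2} together with the asymptotic estimate \eqref{U1-leU-} to show the supremal $\tau$ must be $0$.
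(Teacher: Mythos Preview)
Your three-step plan matches the paper's architecture and is essentially correct. Two points deserve comment.

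\textbf{The speed-gap estimate is asymmetric in $x$.} The adaptation ``exactly as in Claim~\ref{claim1}'' works only for $x\to-\infty$, where $\hat\theta(x)\to\alpha$ and the Taylor expansion at $\alpha$ applies. On the other side, $\hat\theta(x)\to\alpha_1$ as $x\to+\infty$, and you have no information about $g'(\alpha_1)$; the sech-rate integration bound from \eqref{K_4} also only covers $x<0$ directly. The paper resolves this by splitting: for $x<0$ it argues as in Lemma~\ref{U+}, while for $x\ge0$ it observes that since $c_{\alpha_1}/\sin\alpha_1>c_{\alpha\beta}$ strictly, the speed gap $c_{e(x)}-c_{\alpha\beta}/\sqrt{\varphi_1'^2(\varrho x)+1}$ stays bounded below by a positive \emph{constant} $c$, and then runs the inner/outer-zone estimate with this constant in place of $C_7\,\text{sech}(\varrho x)$. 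Your global sech bound is in fact true (the constant $c$ dominates $C_7\,\text{sech}(\varrho x)$ for $x\ge0$), so your argument can be completed by adding this observation; but as written, the justification has a gap on the right half-line.

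\textbf{The sign of $\varepsilon$.} Your suspicion is correct: the displayed definition $U_1^-=U_{e(x)}(\underline\xi,x,y)+\varepsilon\,\text{sech}(\varrho x)$ is inconsistent with the paper's own estimate $f(x,y,U_{e(x)})-f(x,y,U_1^-)\le-\lambda\varepsilon\,\text{sech}(\varrho x)$ in the outer zones, which requires the correction to be $-\varepsilon\,\text{sech}(\varrho x)$. You should take the minus sign, after which both the subsolution inequality and the pointwise bound \eqref{U1-le2e} go through as you outline.
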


\begin{proof}
Assume that
$$\varepsilon_0\le \frac{\sigma}{2},$$
where $\sigma>0$ is defined in (F3).
More restrictions on $\varepsilon_0$ will be given later. It follows from similar computation as Step~1 of the proof of Lemma~\ref{U+} that
\begin{align*}
NU_1^-:=&(U^-_1)_t-\Delta_{x,y} U^-_1 -f(x,y,U_1^-)\\
=& \partial_{\xi} U_{e(x)} \underline{\xi}_t -\partial_{\xi\xi} U_{e(x)} (\underline{\xi}_x^2+\underline{\xi}_y^2) -2\nabla_{x,y}\partial_{\xi} U_{e(x)}\cdot (\underline{\xi}_x,\underline{\xi}_y) -\Delta_{x,y} U_{e(x)}-\partial_{\xi} U_{e(x)} \underline{\xi}_{xx} \\
&-U''_{e(x)} \cdot e'(x)\cdot e'(x)-U'_{e(x)}\cdot e''(x)-2\partial_{\xi}U'_{e(x)}\cdot e'(x) \underline{\xi}_x -2\partial_xU'_{e(x)}\cdot e'(x) \\
&-\varepsilon \varrho^2 \text{sech}''(\varrho x)-f(x,y,U_1^-)\\
=& (c_{e(x)}-\underline{\xi}_t)\partial_{\xi} U_{e(x)}  -\partial_{\xi\xi} U_{e(x)} (\underline{\xi}^2_x+\underline{\xi}^2_y-1)-2\partial_x\partial_{\xi} U_{e(x)}(\underline{\xi}_x-e_1(x)) -\partial_{\xi} U_{e(x)} \underline{\xi}_{xx}\\
& -U''_{e(x)} \cdot e'(x)\cdot e'(x)-U'_{e(x)}\cdot e''(x)-2\partial_{\xi}U'_{e(x)}\cdot e'(x) \underline{\xi}_x -2\partial_xU'_{e(x)}\cdot e'(x) \\
&-\varepsilon \varrho^2 \text{sech}''(\varrho x)+f(x,y,U_{e(x)})-f(x,y,U_1^-),
\end{align*}
where $U_{e(x)}$, $\partial_{\xi}U_{e(x)}$, $\partial_{\xi\xi} U_{e(x)}$, $\nabla_{x,y}\partial_{\xi} U_{e(x)}$, $\Delta_{x,y}U_{e(x)}$, $U''_{e(x)}\cdot e'(x)\cdot e'(x)$, $U'_{e(x)}\cdot e''(x)$, $\partial_{\xi}U'(e(x))\cdot e'(x)$, $\partial_x U'_{e(x)}\cdot e'(x)$ are taking values at $(\xi(t,x,y),x,y)$ and $U_1^-$, $\underline{\xi}_t$, $\underline{\xi}_x$, $\underline{\xi}_y$ are taking values at $(t,x,y)$.
Similar as \eqref{C5}, \eqref{C6} in the proof of Lemma~\ref{U+}, there are $C_5>0$ and $C_6>0$ such that
\be\label{U1-C5}
|\partial_{\xi\xi} U_{e(x)} (\xi^2_x+\xi^2_y-1)| +2|\partial_x\partial_{\xi} U_{e(x)}(\xi_x-e_1(x))|+|\partial_{\xi} U_{e(x)} \xi_{xx}|\le C_5 \varrho\text{sech}(\varrho x),
\ee
and
\be\label{U1-C6}
|U''_{e(x)} \cdot e'(x)\cdot e'(x)|+|U'_{e(x)}\cdot e''(x)|
+2|\partial_{\xi}U'_{e(x)}\cdot e'(x) \xi_x| +2|\partial_xU'_{e(x)}\cdot e'(x)|\le C_6\varrho\text{sech}(\varrho x).
\ee

By a similar proof as of Claim~\ref{claim1}, we can easily get that
$$C_{e(x)}-\underline{\xi}_t>0, \hbox{ for $x\in\R$}.$$
and there is $C_7>0$ such that 
$$c_{e(x)}-\underline{\xi}_t=c_{e(x)}-\frac{c_{\alpha\beta}}{\sqrt{\varphi_1'^2(\varrho x)+1}}\ge C_7\text{sech}(\varrho x)>0, \hbox{ for $x$ being negative enough}.$$
Since $\varphi_1'(x)(\varrho x)\rightarrow -\cot\alpha_1$, $e(x)\rightarrow (\cos\alpha_1,\sin\alpha_1)$ as $x\rightarrow +\infty$ and $c_{\alpha_1}/\sin\alpha_1>c_{\alpha}/\sin\alpha$, there is a constant $c>0$ such that
\be\label{eq-c}
c_{e(x)}-\underline{\xi}_t=c_{e(x)}-\frac{c_{\alpha\beta}}{\sqrt{\varphi_1'^2(\varrho x)+1}}\ge c>0, \hbox{ for all $x\ge 0$}.
\ee

For $x<0$, one can make similar arguments as in the proof of Lemma~\ref{U+} to get that $NU^-_1\le 0$.
For $x\ge 0$, one can get from \eqref{U1-C5}, \eqref{U1-C6}, \eqref{eq-c} and Lemma~\ref{lemma-sech} that
\begin{align}
NU^-_1\le & c\partial_{\xi} U_{e(x)}  +(C_5+C_6)\varrho\text{sech}(\varrho x) +2\varepsilon \varrho^2 \text{sech}(\varrho x)+f(x,y,U_{e(x)})-f(x,y,U_1^-).\label{LU1-2}
\end{align}
For $(t,x,y)\in\R\times\R^2$ such that $\underline{\xi}(t,x,y)\ge C$ and $\underline{\xi}(t,x,y)\le -C$ where $C$ is defined by \eqref{eq-C}, it follows from (F3) and $\varepsilon\le \varepsilon_0\le \sigma/2$ that
$$f(x,y,U_{e(x)})-f(x,y,U^-_1)\le -\lambda \varepsilon \text{sech}(\varrho x).$$
Since $\partial_{\xi} U_e<0$, one has that 
$$NU^-_1\le \Big((C_5+C_6)\varrho+2\varepsilon \varrho^2-\lambda\varepsilon\Big)\text{sech}(\varrho x)\le0,$$
by taking $\varrho(\varepsilon)>0$ small enough such that
\be\label{de2}
(C_5+C_6)\varrho+2\varepsilon \varrho^2-\lambda\varepsilon<0,
\ee
and $0<\varrho\le \varrho(\varepsilon)$.
Finally, for $(t,x,y)\in\R\times\R^2$ such that $-C\le \xi(t,x,y)\le C$, there is $k>0$ such that 
$$-\partial_{\xi} U_e(\xi,x,y)\ge k \hbox{ for all $e\in\mathbb{S}$}.$$
Notice that 
$$f(x,y,U_{e(x)})-f(x,y,U^+)\le M \varepsilon \text{sech}(\varrho x),$$
where $M:=\max_{(x,y,u)\in\mathbb{T}^2\times\R} |f_u(x,y,u)|$. Thus, it follows from \eqref{LU1-2} and \eqref{de2} that
$$NU^-_1\le -kc+\Big((C_5+C_6)\varrho+2\varepsilon \varrho^2+M\varepsilon\Big)\text{sech}(\varrho x)\le -kc +(\lambda+M)\varepsilon\text{sech}(\varrho x)\le 0,$$
by taking $\varepsilon_0=\min\{\sigma/2, k c/(\lambda+M)\}$ and $0<\varepsilon\le \varepsilon_0$.

By similar arguments as in Step 2 of the proof of Lemma~\ref{U+}, one gets that \eqref{U1-leU-} holds. The inequality \eqref{U1-le2e} can be gotten by comparing $U_1^-(t,x,y)$ with $U_{\alpha}(x\cos\alpha+y\sin\alpha-c_{\alpha}t,x,y)$ through similar arguments as in Step 3 of the proof of Lemma~\ref{U+}. This completes the proof.
\end{proof}
\vskip 0.3cm

Similarly, since $g'(\beta)>0$, one can take $\beta_1\in(\beta,\pi)$ such that
$$\frac{c_{\beta}}{\sin\beta}<\frac{c_{\theta}}{\sin\theta}, \hbox{ for all $\theta\in (\beta,\beta_1]$}.$$
Similar as Lemma~\ref{lemma-psi}, there is a smooth function $\varphi_2(x)$ with $y=-x\cot\beta$ and $y=-x\cot \beta_1$ being its asymptotic lines and there are positive constant $k_5$, $k_6$ and $K_5$ such that
\begin{eqnarray}\label{K_5}
\left\{\begin{array}{lll}
\varphi_2''(x)<0,&&\hbox{ for all $x\in\R$},\\
-\cot\beta_1<\psi'_2(x)>-\cot\beta,&&\hbox{ for all $x\in\R$},\\ 
k_5\text{sech}(x)\le -\cot\beta_1-\varphi_2'(x)\le K_5 \text{sech}(x), &&\hbox{ for $x<0$},\\
k_6\text{sech}(x)\le \varphi_2'(x)+\cot\beta\le K_5 \text{sech}(x), &&\hbox{ for $x\ge 0$},\\
\max(|\varphi_2''(x)|,|\varphi_2'''(x)|)\le K_5 \text{sech}(x), &&\hbox{ for all $x\in\R$}.
\end{array}
\right.
\end{eqnarray}
Take a constant $\varrho$ which will be determined later.  For every point $(x,y)$ on the curve $y=\varphi_2(\varrho x)/\varrho$, there is a unit normal
$$e(x)=(e_1(x),e_2(x))=\Big(-\frac{\varphi_2'(\varrho x)}{\sqrt{\varphi_2'^2(\varrho x)+1}},\frac{1}{\sqrt{\varphi_2'^2(\varrho x)+1}}\Big).
$$

For $(x,y)\in\R^2$ and $t\in\R$, take a constant $\varepsilon$ and we define
$$U_{2}^-(t,x,y)=U_{e(x)}(\underline{\xi}(t,x,y),x,y)+\varepsilon \text{sech}(\varrho x),$$
where  
$$
\underline{\xi}(t,x,y)=\frac{y-c_{\alpha\beta}t-\varphi_2(\varrho x)/\varrho}{\sqrt{\varphi_2'^2(\varrho x)+1}}.
$$
Similar as Lemma~\ref{U1-}, we can prove the following lemma.

\begin{lemma}\label{U2-}
There exist $\varepsilon_0$ and $\varrho(\varepsilon_0)$ such that for any $0<\varepsilon\le \varepsilon_0$ and $0<\varrho\le \varrho(\varepsilon_0)$, the function $U_2^-(t,x,y)$ is a subsolution of \eqref{eq1.1}. Moreover, it satisfies
$$\lim_{R\rightarrow +\infty} \sup_{x>R} \Big| U_2^-(t,x,y)-U_{\beta}(x\cos\beta+y\sin\beta-c_{\beta}t,x,y)\Big|\le 2\varepsilon,$$
and
$$U_2^-(t,x,y)\le U_{\beta}(x\cos\beta+y\sin\beta-c_{\beta}t,x,y), \hbox{ for all $t\in\R$ and $(x,y)\in\R^2$}.$$
\end{lemma}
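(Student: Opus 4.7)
The statement is the exact mirror image of Lemma~\ref{U1-}: $\varphi_2$ replaces $\varphi_1$, the roles of $x\to\pm\infty$ are swapped, and the anchoring direction is $\beta$ rather than $\alpha$. My plan is to run the three-step argument of Lemma~\ref{U1-} essentially verbatim, modifying only the direction-sensitive sign estimate.

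First I would compute $NU_2^- := (U_2^-)_t - \Delta_{x,y} U_2^- - f(x,y,U_2^-)$, apply the chain rule, and cancel the leading terms via the pulsating-front equation \eqref{Ue} for $U_{e(x)}$. The bounds \eqref{K_5} together with Lemmas~\ref{ASY2} and~\ref{lemma-Ue'} yield sech-type bounds on $|e'(x)|$, $|e''(x)|$ as in \eqref{K2K3} and, via estimates identical in form to \eqref{U1-C5}--\eqref{U1-C6}, control every geometric error by $(C_5+C_6)\varrho\,\text{sech}(\varrho x)$. Next, for the sign of the dominant first-order term, I would redo Claim~\ref{claim1}: setting $\theta(x) = \arccos e_1(x)$, one has $\theta(x)\in(\beta,\beta_1)$ with $\theta(+\infty)=\beta$, $\theta(-\infty)=\beta_1$, and by the choice of $\beta_1$, $c_{\theta(x)}/\sin\theta(x) > c_{\alpha\beta}$ uniformly in $x$. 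For $x\le 0$ the angle stays bounded away from $\beta$, giving a uniform bound $c_{e(x)} - \underline{\xi}_t \ge c > 0$; for $x\to+\infty$, Taylor expansion of $g(s)=c_s/\sin s$ at $s=\beta$ with $g'(\beta)>0$, combined with the quantitative estimate $\varphi_2'(\varrho x) + \cot\beta \ge k_6\,\text{sech}(\varrho x)$ from \eqref{K_5}, upgrades this to $c_{e(x)} - \underline{\xi}_t \ge C_7\,\text{sech}(\varrho x)$.

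Then I would trichotomize on $\underline{\xi}(t,x,y)$: on $\{|\underline{\xi}|\ge C\}$ the one-sided monotonicity \eqref{lambda} gives $f(x,y,U_{e(x)}) - f(x,y,U_2^-) \le -\lambda\varepsilon\,\text{sech}(\varrho x)$, which dominates the $\varrho$-error; on $\{|\underline{\xi}|\le C\}$, Lemma~\ref{lemma-ut'} provides $-\partial_\xi U_{e(x)}\ge k$, and thanks to the uniform lower bound $c$ on $c_{e(x)} - \underline{\xi}_t$, the choice $\varepsilon_0 = \min\{\sigma/2,\ kc/(\lambda+M)\}$ and $\varrho(\varepsilon)$ small enough yields $NU_2^-\le 0$ in every regime. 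The asymptotic estimate as $x\to+\infty$ and the global comparison $U_2^- \le U_\beta(x\cos\beta+y\sin\beta-c_\beta t,x,y)$ then follow from the two-step argument of Steps~2--3 of Lemma~\ref{U+} with $U_\alpha$ replaced by $U_\beta$ and the asymptotic sides swapped: use the convergences $e(x)\to(\cos\beta,\sin\beta)$ and $\underline{\xi}(t,x,y)\to x\cos\beta+y\sin\beta-c_\beta t$ as $x\to+\infty$ for the $2\varepsilon$-closeness, and slide shifts of $U_\beta$ down to $\tau_*=0$ via the sub/super-solution sliding method of \cite[Lemma~4.2]{BH2}.

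The only step that requires any genuine thought beyond mechanical transcription is the directional sign estimate in the second step: one has to verify that $g'(\beta)>0$ really does permit the choice of $\beta_1$ close enough to $\beta$ so that the strict inequality $c_\theta/\sin\theta > c_{\alpha\beta}$ holds uniformly on $(\beta,\beta_1]$ — which is exactly how $\beta_1$ is introduced just before \eqref{K_5} — and that the corresponding one-sided bound from \eqref{K_5} converts into sech-decay through a Taylor expansion. Everything else is a formally identical copy of the computations already carried out for $U_1^-$.
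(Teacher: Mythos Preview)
Your proposal is correct and follows exactly the approach the paper intends: the paper itself offers no separate proof of Lemma~\ref{U2-}, merely stating that it is proved ``similar as Lemma~\ref{U1-}'', and you have accurately identified all the mirror-image modifications (swapping $\alpha\leftrightarrow\beta$, $\alpha_1\leftrightarrow\beta_1$, $\varphi_1\leftrightarrow\varphi_2$, the roles of $x\to\pm\infty$, and using $g'(\beta)>0$ in place of $g'(\alpha)<0$ for the Taylor estimate). The one point worth recording explicitly is that for $x\le 0$ the angle $\theta(x)$ lies in the compact interval $[\theta(0),\beta_1]\subset(\beta,\beta_1]$, on which $c_\theta/\sin\theta-c_{\alpha\beta}$ is continuous and strictly positive, so the uniform lower bound $c>0$ is immediate by compactness; you have this right.
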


Then, we need the following sub and supersolutions for the Cauchy problems of \eqref{eq1.1}.

\begin{lemma}\label{lemma-subsup}
For any function $u(t,x,y)\in C^{1,2}(\R\times\R^2)$, if it is a subsolution of \eqref{eq1.1} for $(t,x,y)\in\R\times\R^2$ with $u_t>0$ and for any $0<\sigma_1<1/2$ there is a positive constant $k$ such that
\be\label{ut-k}
u_t\ge k, \hbox{ for $(t,x,y)\in\R\times\R^2$ such that $\sigma_1\le u(t,x,y)\le 1-\sigma_1$},
\ee
then for any $0<\delta<\sigma/2$ where $\sigma$ is defined in (F3), there exist positive constants $\omega$ and $\lambda$ such that
$$\underline{u}(t,x,y)=u(t+\omega\delta e^{-\lambda t}-\omega\delta,x,y)-\delta e^{-\lambda t},$$
is a subsolution of \eqref{eq1.1} for $t\ge 0$ and $(x,y)\in\R^2$. Similarly, if $u(t,x,y)$ is a smooth supersolution satisfying \eqref{ut-k}, then for any $0<\delta<\sigma/2$, there exist positive constants $\omega$ and $\lambda$ such that
$$\overline{u}(t,x,y)=u(t-\omega \delta e^{-\lambda t}+\omega\delta,x,y)+\delta e^{-\lambda t}$$
is a supersolution of \eqref{eq1.1} for $t\ge 0$ and $(x,y)\in\R^2$.
\end{lemma}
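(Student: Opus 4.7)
The plan is a standard Fife--McLeod type time-shift construction adapted to the present heterogeneous setting. Let $\lambda_f>0$ denote the bistable constant from (F3) (renamed here to avoid collision with the perturbation parameter $\lambda$ in the statement), set $M:=\sup_{(x,y,u)\in\mathbb{T}^2\times\R}|f_u(x,y,u)|$, and choose $\sigma_1:=\sigma/2$ in \eqref{ut-k} with associated lower bound $k>0$. I will then fix
$$\lambda:=\lambda_f,\qquad \omega:=\frac{M+\lambda_f}{\lambda_f\,k},$$
and show $\underline u$ is a subsolution with these choices.

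First I write $s(t)=t+\omega\delta e^{-\lambda t}-\omega\delta$ and compute
\begin{align*}
\underline u_t-\Delta\underline u-f(x,y,\underline u)
&=u_t(s,x,y)\bigl(1-\omega\delta\lambda e^{-\lambda t}\bigr)+\lambda\delta e^{-\lambda t}-\Delta u(s,x,y)-f(x,y,u(s,x,y)-\delta e^{-\lambda t}).
\end{align*}
Applying the subsolution inequality $u_t(s,x,y)-\Delta u(s,x,y)-f(x,y,u(s,x,y))\le 0$ at the shifted time $s$, and using $u_t>0$ together with $\omega>0$ to discard the nonpositive term $-\lambda\omega u_t(s,x,y)\delta e^{-\lambda t}$, this reduces to the pointwise estimate
\begin{equation*}
\underline u_t-\Delta\underline u-f(x,y,\underline u)\le\bigl[f(x,y,u(s,x,y))-f(x,y,u(s,x,y)-\delta e^{-\lambda t})\bigr]+\lambda\delta e^{-\lambda t}\bigl(1-\omega u_t(s,x,y)\bigr).
\end{equation*}

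The remaining task is to kill this right-hand side via a three-region dichotomy on the value of $u(s,x,y)$. On $\{u(s,x,y)\ge 1-\sigma/2\}$, since $\delta<\sigma/2$ the entire interval $[u(s,x,y)-\delta e^{-\lambda t},u(s,x,y)]$ lies in $[1-\sigma,+\infty)$, so \eqref{lambda} gives $f(x,y,u(s))-f(x,y,u(s)-\delta e^{-\lambda t})\le-\lambda_f\delta e^{-\lambda t}$; bounding $1-\omega u_t\le 1$ yields an upper bound of $(\lambda-\lambda_f)\delta e^{-\lambda t}=0$. The symmetric argument handles $\{u(s,x,y)\le\sigma/2\}$, where the interval instead lies in $(-\infty,\sigma]$. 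On the intermediate region $\{\sigma/2\le u(s,x,y)\le 1-\sigma/2\}$, the hypothesis \eqref{ut-k} applied at $(s,x,y)$ gives $u_t(s,x,y)\ge k$, and the crude estimate $|f(x,y,u(s))-f(x,y,u(s)-\delta e^{-\lambda t})|\le M\delta e^{-\lambda t}$ combines to give an upper bound of $(M+\lambda-\lambda\omega k)\delta e^{-\lambda t}=0$ by the choice of $\omega$. This establishes the subsolution claim for $t\ge 0$.

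The supersolution case is entirely analogous: with $\overline s(t)=t-\omega\delta e^{-\lambda t}+\omega\delta$, the same computation produces
$$\overline u_t-\Delta\overline u-f(x,y,\overline u)\ge\bigl[f(x,y,u(\overline s))-f(x,y,u(\overline s)+\delta e^{-\lambda t})\bigr]-\lambda\delta e^{-\lambda t}\bigl(1-\omega u_t(\overline s)\bigr),$$
and the same three-region split with the same choices of $\lambda$ and $\omega$ yields nonnegativity. I do not anticipate a genuine obstacle here—the periodicity of $f$ in $(x,y)$ plays no role because every estimate is pointwise in $(x,y)$—the only subtlety is making sure the threshold $\sigma_1$ in \eqref{ut-k} is taken equal to $\sigma/2$ so that the lower bound on $u_t$ is available exactly on the intermediate region produced by (F3).
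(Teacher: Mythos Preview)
Your proof is correct and follows essentially the same approach as the paper: the same time-shift computation, the same three-region dichotomy at the thresholds $\sigma/2$ and $1-\sigma/2$, and the same choice of $\omega$ satisfying $k\omega\lambda\ge M+\lambda$ (the paper writes this as $k\omega\ge(\lambda+M)/\lambda$ and takes $\lambda$ equal to the constant from (F3), just as you do). One minor wording glitch: you say you ``discard'' the nonpositive term $-\lambda\omega u_t\delta e^{-\lambda t}$, but in fact you keep it in the factor $(1-\omega u_t)$ and only bound it by $1$ on the outer regions---the mathematics is right, just the sentence is misleading.
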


\begin{proof}
We only prove for the subsolution. Similar arguments can be applied for the supersolution. Take any $0<\delta<\sigma/2$ where $\sigma$ is defined in (F3). Let $k>0$ such that $u_t\ge k$ for $(t,x,y)\in\R\times\R^2$ such that $\sigma\le u\le 1-\sigma/2$.
Take $\omega>0$ such that
$$k\omega\ge \frac{\lambda +M}{\lambda},$$
where $\lambda$ is defined in (F3) and $M:=\max_{(x,y,u)\in\mathbb{T}^2\times\R} |f_u(x,y,u)|$.

We then check that 
$$N\underline{u}:=\underline{u}_t-\Delta_{x,y}\underline{u}-f(x,y,\underline{u})\le 0, \hbox{ for $t> 0$ and $(x,y)\in\R^2$}.$$
By computation, one can get that
\begin{align*}
N\underline{u}=-\omega\delta \lambda e^{-\lambda t} u_t+\delta \lambda e^{-\lambda t}+f(x,y,u(t+\omega \delta e^{-\lambda t}-\omega \delta,x,y))-f(x,y,\underline{u}).
\end{align*}
For $t> 0$ and $(x,y)\in\R^2$ such that $1-\sigma/2 \le u(t+\omega\delta e^{-\lambda t},x,y)\le 1$ and $0\le u(t+\omega \delta e^{-\lambda t},x,y)\le \sigma/2$ respectively, one has that $ \underline{u}(t,x,y)\ge 1-\sigma$ and $\underline{u}(t,x,y)\le \sigma$ respectively. Then, by \eqref{lambda}, it follows that
$$f(x,y,u(t+\omega \delta e^{-\lambda t}-\omega \delta,x,y))-f(x,y,\underline{u})\le -\lambda \delta e^{-\lambda t}.$$
Thus, by $u_t>0$, we have
$$N\underline{u}\le \delta\lambda e^{-\lambda t} -\lambda \delta e^{-\lambda t}\le 0.$$
For $t> 0$ and $(x,y)\in\R^2$ such that $\delta \le u(t+\omega \delta e^{-\lambda t},x,y)\le 1-\sigma/2$, one has that
$$N\underline{u}\le -k\omega\delta\lambda e^{-\lambda t} +\delta \lambda e^{-\lambda t}+M \delta e^{-\lambda t}\le 0,$$
by the definition of $\omega$.

This completes the proof.
\end{proof}
\vskip 0.3cm

Now, we are ready to prove the stability of the curved front of Theorem~\ref{th1}.

\begin{proof}[Proof of Theorem~\ref{th4}]
Let
$$\underline{U}(t,x,y)=\max\{U_1^-(t+\omega \delta e^{-\lambda t}-\omega \delta,x,y)-\delta e^{-\lambda t},U_2^-(t+\omega \delta e^{-\lambda t}-\omega \delta,x,y)-\delta e^{-\lambda t}\},$$
and
$$\overline{U}(t,x,y)=U^+(t-\omega\delta e^{-\lambda t}+\omega \delta,x,y)+\delta e^{-\lambda t},$$
where $\omega$, $\delta$ and $\lambda$ are defined in Lemma~\ref{lemma-subsup}.

By \eqref{eq-u0}, there is $R_{\delta}>0$ such that
$$U_{\alpha\beta}^-(0,x,y)-\delta\le u_0(x,y)\le U^-_{\alpha\beta}(0,x,y)+\delta,\hbox{ for $x^2+y^2>R_{\delta}^2$}.$$
Then, it follows from Lemma~\ref{U+}, Lemma~\ref{U1-} and Lemma~\ref{U2-} that
$$\underline{U}(0,x,y)\le u_0(x,y)\le \overline{U}(0,x,y), \hbox{ for $x^2+y^2>R_{\delta}^2$}.$$
By the definition of $\psi(x)$, one has that $\psi(0)>0$. Thus, 
$$\xi(0,x,y)=\frac{y-\psi(\varrho x)/\varrho}{\sqrt{\psi'^2(\varrho x)}+1}\rightarrow -\infty \hbox{ as $\varrho\rightarrow 0$ for $x^2+y^2\le R_{\delta}^2$},$$
which implies $U_{e(x)}(\xi(0,x,y),x,y)\rightarrow 1$ as $\varrho\rightarrow 0$ for $x^2+y^2\le R_{\delta}^2$. 
By taking $\varrho$ small enough, one can make that
$$u_0(x,y)\le 1\le U^+(0,x,y)+\delta=\overline{U}(0,x,y), \hbox{ for $x^2+y^2\le R_{\delta}^2$}.$$
Similarly, since $\varphi_1(0)<0$ and $\varphi_2(0)<0$, one can make that
$$u_0(x,y)\ge 0\ge \underline{U}(0,x,y), \hbox{ for $x^2+y^2\le R_{\delta}^2$},$$
by taking $\varrho$ small enough. Notice that by taking $\varrho$ small enough, $\delta$ can be arbitrary small.
Thus, we have that
$$\underline{U}(0,x,y)\le u_0(x,y)\le \overline{U}(0,x,y), \hbox{ for all $(x,y)\in\R^2$}.$$
On the other hand, by Lemma~\ref{lemma-ut'}, one knows that $U_1^-(t,x,y)$, $U^-_2(t,x,y)$ and $U^+(t,x,y)$ satisfy \eqref{ut-k}. 
By Lemma~\ref{lemma-subsup} and the comparison principle, one can get that
$$\underline{U}(t,x,y)\le u(t,x,y)\le \overline{U}(t,x,y), \hbox{ for $t\ge 0$ and $(x,y)\in\R^2$}.$$
Take a sequence $t_n=L_2n/c_{\alpha\beta}$ where $L_2$ is the period of $y$. Then, $t_n\rightarrow +\infty$ as $n\rightarrow +\infty$. By parabolic estimates, the sequence $u_n(t,x,y):=u(t+t_n,x,y+L_2 n)$ converges, locally uniformly in $\R\times\R^2$, to a solution $u_{\infty}(t,x,y)$ of \eqref{eq1.1}. Since $U_1^-(t+t_n,x,y+L_2 n)=U_1^-(t,x,y)$, $U_2^-(t+t_n,x,y+L_2 n)=U_2^-(t,x,y)$ and $U^+(t+t_n,x,y+L_2 n)=U^+(t,x,y)$, one has that
\be\label{U1unU+}
\begin{aligned}
\max\{U_1^-(t+\omega \delta e^{-\lambda (t+t_n)}-&\omega \delta,x,y)-\delta e^{-\lambda (t+t_n)},U_2^-(t+\omega \delta e^{-\lambda (t+t_n)}-\omega \delta,x,y)-\delta e^{-\lambda (t+t_n)}\}\\
&\le u_n(t,x,y)\le U^+(t-\omega\delta e^{-\lambda (t+t_n)}+\omega \delta,x,y)+\delta e^{-\lambda (t+t_n)},
\end{aligned}
\ee
and by passing $n\rightarrow +\infty$,
$u_{\infty}(t,x,y)$ satisfies
$$\max\{U_1^-(t-\omega\delta,x,y),U_2^-(t-\omega\delta,x,y)\}\le u_{\infty}(t,x,y)\le U^+(t+\omega\delta,x,y), \hbox{ for $(t,x,y)\in\R\times\R^2$}.$$
Let $U^-_{12}(t,x,y)=\max\{U_1^-(t,x,y), U^-_2(t,x,y)\}$. Then, by Lemmas \ref{U1-}, \ref{U2-} and similar arguments as Step~2 of the proof of Lemma~\ref{U+}, one can get that
\be\label{U12ab}
\lim_{R\rightarrow +\infty} \sup_{x^2+(y-c_{\alpha,\beta} t)^2>R^2} \Big| U^-_{12}(t,x,y)-U^-_{\alpha\beta}(t,x,y)\Big|\le 2\varepsilon.
\ee
By Lemma~\ref{U+}, \eqref{U12ab} and $\delta$, $\varepsilon$ can be taken arbitrary small, we have that
$$
\lim_{R\rightarrow +\infty} \sup_{x^2+(y-c_{\alpha\beta} t)^2>R^2} \Big| u_{\infty}(t,x,y)-U^-_{\alpha\beta}(t,x,y)\Big|=0,$$
By uniqueness of the curved front, we have $u_{\infty}(t,x,y)\equiv V(t,x,y)$. Thus, for any $\varepsilon>0$, it follows from \eqref{Vf}, \eqref{U1unU+}, \eqref{U12ab}, Lemma~\ref{U+} and taking $\delta$ small enough that  there is $t_0>0$ large enough such that 
$$|u(t_0,x,y)-V(t_0,x,y)|\le \varepsilon, \hbox{ for all $(x,y)\in\R^2$}.$$
On the other hand, by $V_t>0$ and a similar proof as of Lemma~\ref{lemma-ut'}, one knows that $V(t,x,y)$ satisfies \eqref{ut-k}. 
By Lemma~\ref{lemma-subsup} again and the comparison principle, one gets that
$$V(t_0 +t+\omega\varepsilon e^{-\lambda t}-\omega\varepsilon,x,y)-\varepsilon e^{-\lambda t}\le u(t_0+t,x,y)\le V(t_0-\omega\varepsilon e^{-\lambda t}+\omega\varepsilon,x,y)+\varepsilon e^{-\lambda t},$$
for $t\ge 0$ and $(x,y)\in\R^2$. Then, since $\varepsilon$ can be arbitrary small, one finally has that
$$u(t,x,y)\rightarrow V(t,x,y), \hbox{ as $t\rightarrow +\infty$ uniformly in $\R\times\R^2$}.$$
This completes the proof.
\end{proof}

\section{A curved front with varying interfaces}
In this section, we construct a curved front with varying interfaces. It behaves as three pulsating fronts as $t\rightarrow -\infty$ and as two pulsating fronts as $t\rightarrow +\infty$. We can not apply the idea of Hamel \cite{H} by considering a Neumann boundary problem in the half plane $x<0$ since our problem is not orthogonal symmetric with respect to $y$-axis in general. 

Let $\alpha$, $\beta$ and $\theta$ satisfy Theorem~\ref{th5}. We will need the following properties.

\begin{lemma}\label{lemma-c1c2}
It holds that
$$c_{\alpha\theta}e_{\alpha\theta}=\left(\frac{c_{\alpha}\sin\theta-c_{\theta}\sin\alpha}{\sin(\theta-\alpha)},\frac{c_{\alpha}\cos\theta-c_{\theta}\cos\alpha}{\sin(\alpha-\theta)}\right):=(c_1,c_2),$$
and
$$c_{\beta\theta}e_{\beta\theta}=\left(\frac{c_{\beta}\sin\theta-c_{\theta}\sin\beta}{\sin(\theta-\beta)},\frac{c_{\beta}\cos\theta-c_{\theta}\cos\beta}{\sin(\beta-\theta)}\right):=(\hat{c}_1,\hat{c}_2),$$
with $c_1>0$ and $\hat{c}_1<0$. Moreover, 
$$\frac{c_{\alpha}}{e_{\alpha\theta}\cdot (\cos\alpha,\sin\alpha)}=\frac{c_{\theta}}{e_{\alpha\theta}\cdot (\cos\theta,\sin\theta)}=c_{\alpha\theta}>\frac{c_{\theta_1}}{e_{\alpha\theta}\cdot (\cos\theta_1,\sin\theta_1)},\hbox{ for any $\theta_1\in(\alpha,\theta)$}, $$
and
$$\frac{c_{\beta}}{e_{\beta\theta}\cdot (\cos\beta,\sin\beta)}=\frac{c_{\theta}}{e_{\beta\theta}\cdot (\cos\theta,\sin\theta)}=c_{\beta\theta}>\frac{c_{\theta_2}}{e_{\beta\theta}\cdot (\cos\theta_2,\sin\theta_2)},\hbox{ for any $\theta_2\in(\theta,\beta)$}.$$
\end{lemma}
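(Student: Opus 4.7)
The lemma splits into three parts: the explicit expressions for the vectors $c_{\alpha\theta} e_{\alpha\theta}$ and $c_{\beta\theta} e_{\beta\theta}$, the signs of $c_1$ and $\hat c_1$, and the strict inequalities at the end. The main idea is that by hypothesis (i) of Theorem~\ref{th5} the corner of the V-shaped interface of $V_{\alpha\theta}$ travels at velocity $c_{\alpha\theta} e_{\alpha\theta}$ and must lie simultaneously on the two planar level lines $(x,y)\cdot e_\alpha = c_\alpha t$ and $(x,y)\cdot e_\theta = c_\theta t$. This produces the identities
\[
c_{\alpha\theta}(e_{\alpha\theta}\cdot e_\alpha)=c_\alpha,\qquad c_{\alpha\theta}(e_{\alpha\theta}\cdot e_\theta)=c_\theta,
\]
which are exactly the equalities in the ``Moreover'' clause; they also give a $2\times 2$ linear system in the unknowns $(P,Q):=c_{\alpha\theta} e_{\alpha\theta}$, namely $P\cos\alpha+Q\sin\alpha=c_\alpha$ and $P\cos\theta+Q\sin\theta=c_\theta$, with non-vanishing determinant $\sin(\theta-\alpha)$. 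Cramer's rule immediately delivers $(P,Q)=(c_1,c_2)$, and the parallel computation for $(\beta,\theta)$ gives $c_{\beta\theta} e_{\beta\theta}=(\hat c_1,\hat c_2)$.

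For the signs I will note that $\sin(\theta-\alpha)>0$ while $\sin(\theta-\beta)<0$, so $c_1>0$ reduces to $c_\alpha\sin\theta>c_\theta\sin\alpha$, i.e.\ $c_\alpha/\sin\alpha>c_\theta/\sin\theta$, and $\hat c_1<0$ reduces in the same way to $c_\beta/\sin\beta>c_\theta/\sin\theta$. Both hold because Theorem~\ref{th2}, applied to the curved front $V_{\alpha\beta}$ furnished by the assumptions of Theorem~\ref{th5}, produces the chain $c_{\alpha\beta}=c_\alpha/\sin\alpha=c_\beta/\sin\beta>c_\theta/\sin\theta$ for every $\theta\in(\alpha,\beta)$.

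For the strict inequalities $c_{\alpha\theta}>c_{\theta_1}/(e_{\alpha\theta}\cdot(\cos\theta_1,\sin\theta_1))$ with $\theta_1\in(\alpha,\theta)$, and the analogous inequality for $\theta_2\in(\theta,\beta)$, my plan is to perform the rigid rotation already used in the proof of Corollary~\ref{cor2}, aligning $e_{\alpha\theta}$ (respectively $e_{\beta\theta}$) with the new $y$-axis. The transformed reaction still satisfies (F1)--(F3), the transformed $V_{\alpha\theta}$ becomes an entire solution of the rotated equation meeting the hypothesis of Theorem~\ref{th2} with the rotated angles, and that theorem yields $c_{\alpha\theta}>c_{\theta_1'}/\sin\theta_1'$ for every $\theta_1'$ strictly between the images of $\alpha$ and $\theta$ in the new frame. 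Translating back, one has $\sin\theta_1'=e_{\alpha\theta}\cdot(\cos\theta_1,\sin\theta_1)$ and the pulsating-front speed $c_{\theta_1}$ is invariant under rigid rotation of coordinates, which gives the desired inequality.

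The main obstacle is the last step: one must verify carefully that after rotation the hypotheses of Theorem~\ref{th2} are all satisfied (including that the rotated angles lie in $(0,\pi)$ with the correct ordering) and that the book-keeping between the original and rotated frames is consistent. Once the change of variables is set up precisely as in the proof of Corollary~\ref{cor2}, nothing is conceptually new, but the geometric identifications of angles and dot products with $e_{\alpha\theta}$ must be tracked without error.
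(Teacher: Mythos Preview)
Your proposal is correct, and for the ``Moreover'' inequalities and the signs of $c_1,\hat c_1$ it coincides with the paper's approach (rotation aligned with $e_{\alpha\theta}$, then Theorem~\ref{th2}). The genuine difference is in how the identity $c_{\alpha\theta}e_{\alpha\theta}=(c_1,c_2)$ is obtained. The paper argues by contradiction: assuming $c_{\alpha\theta}e_{\alpha\theta}\neq(c_1,c_2)$, it translates $V_{\alpha\theta}$ along the trajectory $(x_n,y_n)=(c_1,c_2)t_n$ with $t_n\to+\infty$, so that $((x_n,y_n)-c_{\alpha\theta}e_{\alpha\theta}t_n)^2\to\infty$; parabolic compactness together with \eqref{Ve1e2} then forces the limit to coincide with the shifted subsolution $\hat U^-_{\alpha\theta}$, which is not a genuine solution, a contradiction. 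You instead read the two scalar relations $c_{\alpha\theta}(e_{\alpha\theta}\cdot e_\alpha)=c_\alpha$ and $c_{\alpha\theta}(e_{\alpha\theta}\cdot e_\theta)=c_\theta$ as a $2\times2$ linear system for $c_{\alpha\theta}e_{\alpha\theta}$ and solve it by Cramer's rule.

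Your route is shorter and more elementary, but it forces a reordering: the two scalar relations are precisely the equalities in the ``Moreover'' clause, and your rigorous justification for them is the rotation-plus-Theorem~\ref{th2} argument you outline for the strict inequalities. So in a clean write-up you should run that rotation argument first (it delivers both the equalities and the strict inequalities), and only then feed the equalities into Cramer's rule. Your opening heuristic about the corner of the interface lying on both planar level lines is suggestive but not a proof on its own; make sure the final argument rests on Theorem~\ref{th2} in the rotated frame rather than on that picture. The paper's limit argument, by contrast, establishes $(c_1,c_2)$ independently of the ``Moreover'' clause, at the cost of repeating the compactness machinery from the proof of Theorem~\ref{th2}.
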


\begin{proof}
Assume by contradiction that $c_{\alpha\theta}e_{\alpha\theta}\neq (c_1,c_2)$. Take a sequence $\{t_n\}_{n\in\mathbb{N}}$ such that $t_n\rightarrow +\infty$. Then, for the sequence 
$$(x_n,y_n)=(c_1,c_2)t_n,$$
one has that $((x_n,y_n)-c_{\alpha\theta}e_{\alpha\theta}t_n)^2\rightarrow +\infty$ as $n\rightarrow +\infty$ since $c_{\alpha\theta}e_{\alpha\theta}\neq (c_1,c_2)$. Notice that for any $n$, there are $k^1_n$, $k^2_n\in\mathbb{Z}$ and $x'_n$, $y'_n\in [0,L_2)$ such that $x_n=k^1_n L_1+x'_n$ and $y_n=k^2_n L_2+y'_n$. Moreover, up to extract subsequences of $x_n$ and $y_n$, there are $x'_*\in [0,L_1]$ and $y'_*\in [0,L_2]$ such that $x_n'\rightarrow x'_*$ and $y'_n\rightarrow y'_*$ as $n\rightarrow +\infty$. Since $f(x,y,\cdot)$ is $L$-periodic in $(x,y)$, one has $f(x+x_n,y+y_n,\cdot)\rightarrow f(x+x'_*,y+y'_*,\cdot)$ as $n\rightarrow +\infty$.  Let $v_n(t,x,y)=V_{\alpha\theta}(t+t_n,x+x_n,y+y_n)$. By standard parabolic estimates, $v_n(t,x,y)$, up to extract of a subsequence, converges to a solution $v_{\infty}(t,x,y)$ of 
\be\label{eq-v2}
v_t-\Delta v=f(x+x'_*,y+y'_*,v), \hbox{ for $t\in\R$ and $(x,y)\in\R^2$}.
\ee
By definitions of $x_n$, $y_n$, $c_1$ and $c_2$, one can also have that
$$U^-_{\alpha\theta}(t+t_n,x+x_n,y+y_n)\rightarrow \hat{U}^-_{\alpha\theta}(t,x,y), \hbox{ as $n\rightarrow +\infty$ uniformly in $\R\times\R^2$},$$
where
$$\hat{U}^-_{\alpha\theta}(t,x,y):=\max\{U_\alpha(x\cos\alpha+y\sin\alpha-c_{\alpha} t,x+x'_*,y+y'_*),U_\theta(x\cos\theta+y\sin\theta-c_{\theta} t,x+x'_*,y+y'_*)\}.$$ 
Moreover, since $V_{\alpha\theta}(t,x,y)$ satisfies
$$\lim_{R\rightarrow +\infty} \sup_{((x,y)-c_{e_1e_2}t e_{\alpha\theta})^2>R^2} \Big| V_{\alpha\theta}(t,x,y)-U^-_{\alpha\theta}(t,x,y)\Big|=0,$$
one then gets that
$$v_n(t,x,y)\rightarrow \hat{U}^-_{\alpha\theta}(t,x,y) \hbox{ as $n\rightarrow +\infty$ locally uniformly in $\R^2$}.$$
In implies that $v_{\infty}(t,x,y)=\hat{U}^-_{\alpha\theta}(t,x,y)$ which is impossible since $\hat{U}^-_{\alpha\theta}(t,x,y)$ is not a solution of \eqref{eq-v2}. Thus, $c_{\alpha\theta}e_{\alpha\theta}= (c_1,c_2)$. Similarly, one can prove that $c_{\beta\theta}e_{\beta\theta}=(\hat{c}_1,\hat{c}_2)$.

The signs of $c_1$ and $\hat{c}_1$ can be easily gotten from the facts $\alpha<\theta<\beta$ and $c_{\alpha}/\sin\alpha=c_{\beta}/\sin\beta>c_{\theta}/\sin\theta$.

By rotating the coordinate such that $e_{\alpha\theta}$ being the $y$-axis, then the speed of the pulsating front $U_{\theta_1}(x\cos\theta_1+y\sin\theta_1-c_{\theta_1}t,x,y)$ in direction $e_{\alpha\theta}$ can be denoted by 
$$\frac{c_{\theta_1}}{e_{\alpha\theta}\cdot (\cos\theta_1,\sin\theta_1)}.$$ By Theorem~\ref{th2}, one has that
$$\frac{c_{\alpha}}{e_{\alpha\theta}\cdot (\cos\alpha,\sin\alpha)}=\frac{c_{\theta}}{e_{\alpha\theta}\cdot (\cos\theta,\sin\theta)}=c_{\alpha\theta}
\hbox{ and }
\frac{c_{\theta_1}}{e_{\alpha\theta}\cdot (\cos\theta_1,\sin\theta_1)}<c_{\alpha\theta}, \hbox{ for any $\theta_1\in(\alpha,\theta)$}.$$
This completes the proof.
\end{proof}
\vskip 0.3cm

Let $\varphi_1(x)$ be a smooth function such that there exist $a_1<0<b_1$ such that
$$\varphi_1(x)=-x\cot\alpha, \hbox{ for $x\le a_1$}, \varphi_1(x)=-x\cot\theta, \hbox{ for $x\ge b_1$ and }  \varphi_1''(x)> 0 \hbox{ for $x\in (a_1,b_1)$}.$$
Let $\varphi_2(x)$ be a smooth function such that there exist $a_2<0<b_2$ such that
$$\varphi_2(x)=-x\cot\theta, \hbox{ for $x\le a_2$}, \varphi_2(x)=-x\cot\beta, \hbox{ for $x\ge b_2$ and } \varphi_2''(x)> 0 \hbox{ for $x\in(a_2,b_2)$}.$$
Let 
$$\psi_1(t,x)=\varphi_1(x-c_1 t) +\rho\text{sech}(x-c_1 t) +\rho\text{sech}(x-\hat{c}_1 t),$$
and
$$\psi_2(t,x)=\varphi_2(x-\hat{c}_1 t) +\rho\text{sech}(x-c_1 t) +\rho\text{sech}(x-\hat{c}_1 t).$$
By $c_1>0$, $\hat{c}_1<0$ and making $|a_1|$, $|a_2|$, $b_1$, $b_2$ large enough and $\rho$ small enough, one can let $(\psi_1)_{xx}>0$ for $t$ negative enough and $x\le (c_1+\hat{c}_1)t/2$ and $(\psi_2)_{xx}>0$ for $t$ negative enough and $x\ge (c_1+\hat{c}_1)t/2$. 
Let 
\begin{eqnarray}
\psi(t,x)=
\left\{\begin{array}{lll}
\psi_1(t,x), &&\hbox{ for $x\le (c_1+\hat{c}_1)t/2$},\\
\psi_2(t,x), &&\hbox{ for $x>(c_1+\hat{c}_1)t/2$}.
\end{array}
\right.
\end{eqnarray}

Take a constant $\varrho$ to be determined. For every point on the curve $y=\psi_1(\varrho t,\varrho x)$, there is a unit normal
$$e(t,x)=(e_1(t,x),e_2(t,x))=\left(-\frac{(\psi_1)_x(\varrho t,\varrho x)}{\sqrt{(\psi_1)_x^2(\varrho t,\varrho x)+1}},\frac{1}{\sqrt{(\psi_1)_x^2(\varrho t,\varrho x)+1}}\right).$$
For every point on the curve $y=\psi_2(\varrho t,\varrho x)$, there is a unit normal
$$\eta(t,x)=\left(-\frac{(\psi_2)_x(\varrho t,\varrho x)}{\sqrt{(\psi_2)_x^2(\varrho t,\varrho x)+1}},\frac{1}{\sqrt{(\psi_2)_x^2(\varrho t,\varrho x)+1}}\right).$$

Take $\varepsilon>0$ to be determined. For $t\in\R$ and $(x,y)\in\R^2$, define
\begin{eqnarray*}
\widetilde{U}^+(t,x,y):=\left\{\begin{aligned}
U_{e(t,x)}\left(\xi_1(t,x,y),x,y\right)+\varepsilon \text{sech}(\varrho(x-c_1 t))+\varepsilon \text{sech}(\varrho(x-\hat{c}_1 t)), &\hbox{ for $x\le \frac{c_1+\hat{c}_1}{2} t$},\\
U_{\eta(t,x)}\left(\xi_2(t,x,y),x,y\right)+\varepsilon \text{sech}(\varrho(x-c_1 t))+\varepsilon \text{sech}(\varrho(x-\hat{c}_1 t)), &\hbox{ for $x> \frac{c_1+\hat{c}_1}{2} t$},
\end{aligned}
\right.
\end{eqnarray*}
where 
$$\xi_1(t,x,y):=\frac{y-c_2 t-\psi_1(\varrho t,\varrho x)/\varrho}{\sqrt{(\psi_1)_x^2(\varrho t,\varrho x)+1}} \hbox{ and } \xi_2(t,x,y):=\frac{y-\hat{c}_2 t-\psi_2(\varrho t,\varrho x)/\varrho}{\sqrt{(\psi_2)_x^2(\varrho t,\varrho x)+1}}.$$
By the definition of $\psi_1$, $\psi_2$, $c_1$, $c_2$, $\hat{c}_1$ and $\hat{c}_2$, one can easily check that around $x=(c_1+\hat{c}_1)t/2$, 
$$U_{e(t,x)}\left(\xi_1(t,x,y),x,y\right)=U_{\theta}(x\cos\theta+y\sin\theta-c_{\theta}t,x,y)=U_{\eta(t,x)}\left(\xi_2(t,x,y),x,y\right),$$
for $t$ negative enough.
Thus, $\widetilde{U}^+(t,x,y)$ is smooth for $t$ negative enough and $(x,y)\in\R^2$.

\begin{lemma}\label{lemma4.2}
There exist $\varepsilon_0$ and $\varrho(\varepsilon_0)$ such that for any $0<\varepsilon\le \varepsilon_0$ and $0<\varrho\le \varrho(\varepsilon_0)$, the function $\widetilde{U}^+(t,x,y)$ is a supersolution of \eqref{eq1.1} for $t$ negative enough. Moreover, it satisfies
\be\label{lemma4.2-1}
\lim_{R\rightarrow +\infty}\sup_{x\le 0, ((x,y)-c_{\alpha\theta}e_{\alpha\theta} t)^2>R^2} \left|\widetilde{U}^+(t,x,y)-U^-_{\alpha\theta}(t,x,y)\right|\le 2\varepsilon,
\ee
\be\label{lemma4.2-2}
\lim_{R\rightarrow +\infty}\sup_{x>0, ((x,y)-c_{\beta\theta}e_{\beta\theta} t)^2>R^2} \left|\widetilde{U}^+(t,x,y)-U^-_{\theta\beta}(t,x,y)\right|\le 2\varepsilon,
\ee
and
\be\label{lemma4.2-3}
\begin{aligned}
\widetilde{U}^+(t,x,y)\ge\max\{&U_\alpha(x\cos\alpha+y\sin\alpha-c_{\alpha} t,x,y),U_\theta(x\cos\theta+y\sin\theta-c_{\theta} t,x,y),\\
&U_\beta(x\cos\beta+y\sin\beta-c_{\beta} t,x,y)\}, \hbox{ for $t$ negative enough and $(x,y)\in\R^2$}.
\end{aligned}
\ee
\end{lemma}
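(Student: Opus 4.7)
The plan is to mirror the three-step structure of the proof of Lemma \ref{U+}, now carried out piecewise on the two regions $R_- = \{x \leq (c_1+\hat c_1)t/2\}$ and $R_+ = \{x > (c_1+\hat c_1)t/2\}$. Before anything else, I would verify that the two branches of $\widetilde U^+$ glue smoothly along the junction for $t$ sufficiently negative: at the junction the argument $\varrho x - c_1\varrho t$ tends to $+\infty$ while $\varrho x - \hat c_1\varrho t$ tends to $-\infty$ as $t\to-\infty$, so both $\varphi_1$ and $\varphi_2$ are in the asymptotic regime where $\varphi_i(\cdot) = -(\cdot)\cot\theta$. The identity $c_\theta/\sin\theta = c_1\cot\theta + c_2 = \hat c_1\cot\theta + \hat c_2$ from Lemma \ref{lemma-c1c2}, together with the fact that the two sech corrections in $\psi_1$ and $\psi_2$ are identical, then shows that along the junction both branches reduce to $U_\theta(x\cos\theta + y\sin\theta - c_\theta t,x,y) + \varepsilon\,\text{sech}(\varrho(x-c_1t)) + \varepsilon\,\text{sech}(\varrho(x-\hat c_1 t))$.

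Next I would compute $L\widetilde U^+$ on $R_-$ following the calculation leading to \eqref{LU+}, with $\xi$ replaced by $\xi_1$ and the single sech replaced by a sum of two. The expansion yields
\begin{equation*}
L\widetilde U^+ = -(c_{e(t,x)} + \xi_{1,t})\,\partial_\xi U_{e(t,x)} + E_{\varrho}(t,x,y) + f(x,y,U_{e(t,x)}) - f(x,y,\widetilde U^+),
\end{equation*}
where the remainder $E_\varrho$ collects all terms involving derivatives of $e(t,x)$ or of $\psi_1$ and satisfies $|E_\varrho|\le C\varrho\,[\text{sech}(\varrho(x-c_1t)) + \text{sech}(\varrho(x-\hat c_1 t))]$ by Lemmas \ref{ASY}, \ref{ASY2}, \ref{lemma-Ue'}. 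The analogue of Claim \ref{claim1} reads
\begin{equation*}
-\xi_{1,t} - c_{e(t,x)} \ge C_7\,[\text{sech}(\varrho(x-c_1 t)) + \text{sech}(\varrho(x-\hat c_1 t))]
\end{equation*}
for $t$ sufficiently negative, and decomposes into three regimes: on the left tail $x - c_1 t \to -\infty$ one has $e(t,x)\to(\cos\alpha,\sin\alpha)$ and the sech estimate follows from hypothesis~(i) $h'(\alpha)<0$ of Theorem \ref{th5} via the Taylor expansion used in the proof of Claim \ref{claim1}; in the interior regime the strict inequality $c_{\theta_1}/(e_{\alpha\theta}\cdot(\cos\theta_1,\sin\theta_1)) < c_{\alpha\theta}$ supplied by Lemma \ref{lemma-c1c2} gives a uniform positive gap that dominates the bounded sechs; near the junction, $e(t,x)\to(\cos\theta,\sin\theta)$ matches the transport speed but the sech at $\varrho(x-\hat c_1 t)$ is itself exponentially small since its argument blows up at rate $|t|$. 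Combined with the bistable trichotomy of Step 1 of Lemma \ref{U+} on the three strips $\xi_1\ge C$, $\xi_1\le -C$, $|\xi_1|\le C$, this yields $L\widetilde U^+\ge 0$ on $R_-$ for $\varepsilon,\varrho$ small enough; region $R_+$ is handled symmetrically using hypothesis (ii) of Theorem \ref{th5} with $h'(\beta)>0$ and the analogous inequality for $c_{\beta\theta}$.

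For \eqref{lemma4.2-1} and \eqref{lemma4.2-2} I would repeat Step 2 of Lemma \ref{U+}: on $R_-\cap\{x\le 0\}$, any sequence with $((x,y) - c_{\alpha\theta}e_{\alpha\theta}t)^2\to\infty$ has either $x - c_1 t\to\pm\infty$ or $\xi_1\to\pm\infty$; using Fr\'echet differentiability of $U_e$ in $e$, Lemma \ref{ASY}, and the equality $c_{\alpha\theta}e_{\alpha\theta}=(c_1,c_2)$ of Lemma \ref{lemma-c1c2}, in each direction $\widetilde U^+$ converges to the matching planar pulsating front, hence to $U^-_{\alpha\theta}$ up to the $2\varepsilon$ slack. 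For \eqref{lemma4.2-3} I would compare $\widetilde U^+$ with each of $U_\alpha$, $U_\theta$, $U_\beta$ separately via the sliding argument of Step 3 of Lemma \ref{U+}, starting from a large positive shift and decreasing to zero, ruling out interior contact by the strong maximum principle together with the fact that $\widetilde U^+$ already exceeds the shifted planar front near the relevant tail. The hard part will be the three-regime sech bound in the second paragraph: unlike the single sech in Claim \ref{claim1}, two sech centers translate at different speeds $c_1$ and $\hat c_1$, so the relative geometry of the three sub-regimes depends on $t$, and one must ensure that the uniform gap from Lemma \ref{lemma-c1c2} in the middle regime does not collapse as $\varrho\to 0$ while the outer tails patch smoothly onto the sum, not the maximum, of the two sechs on the right-hand side.
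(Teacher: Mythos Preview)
Your approach matches the paper's: it also works piecewise on the two half-regions, verifies smooth gluing at the junction (for $t$ negative enough both branches reduce to $U_\theta(x\cos\theta+y\sin\theta-c_\theta t,x,y)$ plus the common sech corrections), computes the parabolic operator on each piece, isolates a speed-difference term plus a remainder, and then runs the trichotomy of Lemma~\ref{U+}. The paper's only cosmetic addition is to pass to the moving frame $X=x-c_1t$, $Y=y-c_2t$, which makes the $\psi_1$-curve almost stationary and simplifies bookkeeping.

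There is, however, a genuine gap in your analogue of Claim~\ref{claim1}. The clean inequality
\[
-\xi_{1,t}-c_{e(t,x)}\ge C_7\bigl[\mathrm{sech}(\varrho(x-c_1t))+\mathrm{sech}(\varrho(x-\hat c_1t))\bigr]
\]
is false as written. Unlike in Lemma~\ref{U+}, the curve $\psi_1$ now depends on $t$ (through the second sech), so $\xi_{1,t}$ picks up two new contributions: one proportional to $\varrho\,\xi_1$ from the $t$-derivative of the denominator $\sqrt{(\psi_1)_x^2+1}$, and one from $(\psi_1)_t$ itself. The first of these is unbounded as $|\xi_1|\to\infty$ and can have either sign. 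The paper's version (Claim~\ref{claim4}) therefore reads
\[
-\xi_{1,t}-c_{e(t,x)}\ge -C_{10}\varrho\,[\cdots]\,|\xi_1|-C_{10}\varrho\,[\cdots]+C_{11}\,[\cdots],
\]
and the $|\xi_1|$ error is absorbed only \emph{after} multiplying by $\partial_\xi U_e$, using $\sup_e\|\xi\,\partial_\xi U_e\|_{L^\infty}<\infty$ from Lemma~\ref{ASY2}. You should reorganise accordingly: isolate the genuine speed gap $(c_1,c_2)\cdot e(t,x)-c_{e(t,x)}$, which by Lemma~\ref{lemma-c1c2} is strictly positive on $R_-$ and obeys the sech lower bound near the $\alpha$-tail via $h'(\alpha)<0$, and move the time-dependence corrections of $\xi_{1,t}$ (together with the new term $U'_{e(t,x)}\cdot e_t$) into the remainder $E_\varrho$.
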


\begin{proof}
We only prove for the part $x\le (c_1+\hat{c}_1)t/2$. Take $0<\varepsilon_0\le \sigma/2$ and more restrictions on $\varepsilon_0$ will be given later. Change variables $X=x-c_1 t$ and $Y=y-c_2 t$. Then, 
$$\psi_1(t,X):=\varphi_1(X)+\rho\text{sech}(X)+\rho\text{sech}(X+(c_1-\hat{c}_1)t),$$
and
$$e(t,X)=(e_1(t,X),e_2(t,X))=\left(-\frac{(\psi_1)_X(\varrho t,\varrho X)}{\sqrt{(\psi_1)_X^2(\varrho t,\varrho X)+1}},\frac{1}{\sqrt{(\psi_1)_X^2(\varrho t,\varrho X)+1}}\right),$$
where $(\psi_1)_X$ is taking values at $(\varrho t,\varrho X)$. 
One can compute that
\begin{align*}
(\psi_1)_t(t,X)=&(c_1-\hat{c}_1)\text{sech}(X+(c_1-\hat{c}_1)t),\\
(\psi_1)_X(t,X)=&\varphi'_1(X)+ \text{sech}'(X)+ \text{sech}'(X+\varrho(c_1-\hat{c}_1)t),\\
(\psi_1)_{tX}(t,X)=&(c_1-\hat{c}_1)\text{sech}'(X+ (c_1-\hat{c}_1)t),\\
(\psi_1)_{XX}(t,X)=&\varphi''_1(X)+\text{sech}''(X)+\text{sech}''(X+(c_1-\hat{c}_1)t),\\
(\psi_1)_{XXX}(t,X)=&\varphi'''_1(X)+\text{sech}'''(X)+\text{sech}'''(X+(c_1-\hat{c}_1)t).
\end{align*}
\begin{align*}
e_t=&\left(-\frac{\varrho(\psi_1)_{tX}}{((\psi_1)_X^2+1)^{3/2}},-\frac{\varrho(\psi_1)_X(\psi_1)_{tX}}{((\psi_1)_X^2+1)^{3/2}}\right),\\
e_X=&\Big(-\frac{\varrho(\psi_1)_{XX}}{((\psi_1)_X^2+1)^{\frac{3}{2}}},-\frac{\varrho(\psi_1)_X (\psi_1)_{XX}}{((\psi_1)_X^2 +1)^{\frac{3}{2}}}\Big),
\end{align*}
and
\begin{align*}
e_{XX}=&\Big(-\frac{\varrho^2(\psi_1)_{XXX}}{((\psi_1)_X^2 +1)^{\frac{3}{2}}}+\frac{3\varrho^2(\psi_1)_X (\psi_1)_{XX}^2}{((\psi_1)_X^2 +1)^{\frac{5}{2}}},-\frac{\varrho^2(\psi_1)_{XX}^2}{((\psi_1)_X^2 +1)^{\frac{3}{2}}}-\frac{\varrho^2(\psi_1)_X (\psi_1)_{XXX}}{((\psi_1)_X^2 +1)^{\frac{3}{2}}}+\frac{3\varrho^2(\psi_1)_X^2 (\psi_1)_{XX}^2 }{((\psi_1)_X^2 +1)^{\frac{5}{2}}}\Big),
\end{align*}
where $(\psi_1)_X$, $(\psi_1)_{XX}$, $(\psi_1)_{XXX}$, $(\psi_1)_{tX}$ are taking values at $(\varrho t,\varrho X)$ in $e_t$, $e_X$, $e_{XX}$. 
Let
$$\widetilde{U}^+(t,X,Y)=U_{e(t,X)}\left(\xi_1(t,X,Y),X+c_1 t,Y+c_2 t\right)+\varepsilon \text{sech}(\varrho X)+\varepsilon \text{sech}(\varrho X-\varrho(c_1-\hat{c}_1)t),$$
where
$$\xi_1(t,X,Y)=\frac{Y-\psi_1(\varrho t,\varrho X)/\varrho}{\sqrt{(\psi_1)_x^2(\varrho t,\varrho X)+1}}.$$

We need to verify that
$$N\widetilde{U}^+:=\widetilde{U}^+_t-\Delta_{X,Y}\widetilde{U}^+-c_1 \widetilde{U}^+_X-c_2 \widetilde{U}^+_Y -f(X+c_1 t,Y+c_2 t,\widetilde{U}^+)\ge 0,$$
for $t$ negative enough and $(x,y)\in\R^2$.
By \eqref{Ue} and after some computation, one can get that
\begin{align*}
N\widetilde{U}^+=&\partial_{\xi} U_{e(t,X)} ((\xi_1)_t-c_1(\xi_1)_X-c_2(\xi_1)_Y+c_{e(t,X)}) + U'_{e(t,X)}\cdot e_t -U''_{e(t,X)}\cdot e_X\cdot e_X -U'_{e(t,X)} \cdot e_{XX} \\
&-2\partial_{\xi} U'_{e(t,X)}\cdot e_{X} (\xi_1)_X-2\partial_{X} U'_{e(t,X)}\cdot e_X -\partial_{\xi\xi} U_{e(t,X)} ((\xi_1)_X^2+(\xi_1)_Y^2-1) \\
&-2\partial_{\xi}\partial_X U_{e(t,X)}((\xi_1)_X-e_1(t,X))-2\partial_{\xi}\partial_Y U_{e(t,X)}((\xi_1)_Y-e_2(t,X)) -\partial_{\xi}U_{e(t,X)} \xi_{XX}\\
&-c_1U'_{e(t,X)}\cdot e_X -\varepsilon \varrho^2 \text{sech}''(\varrho X)- \varepsilon \varrho^2 \text{sech}''(\varrho X-\varrho(c_1-\hat{c}_1)t) -c_1\varepsilon\varrho \text{sech}'(\varrho X)\\
& -c_1\varepsilon \varrho'(\varrho X-\varrho(c_1-\hat{c}_1)t)+f(X+c_1 t,Y+c_2 t,U_{e(t,X)})-f(X+c_1 t,Y+c_2 t,\widetilde{U}^+),
\end{align*}
where $\partial_{\xi} U_{e(t,X)}$, $\partial_{\xi\xi} U_{e(t,X)}$, $\nabla_{X,Y}\partial_{\xi} U_{e(t,X)}$, $ U'_{e(t,X)}\cdot e_t$, $U''_{e(t,X)}\cdot e_X\cdot e_X$, $U'_{e(t,X)}\cdot e_{XX}$, $\partial_{\xi} U'_{e(t,X)}\cdot e_{X}$, $\partial_{X} U'_{e(t,X)}\cdot e_{X}$, $U'_{e(t,X)}\cdot e_{X}$, $U_{e(t,X)}$ are taking values at $(\xi_1(t,X,Y),X,Y)$ and $\widetilde{U}^+$, $(\xi_1)_t$, $(\xi_1)_X$, $(\xi_1)_Y$ are taking values at $(t,X,Y)$.
Similar as those formulas of \eqref{eq-xi}, one can also compute that
\be\label{fomulas}
\begin{aligned}
&(\xi_1)_t=-\frac{\varrho(\psi_1)_X(\psi_1)_{tX}}{(\psi_1)_X^2+1}\xi_1 -\frac{(\psi_1)_t}{\sqrt{(\psi_1)^2_X +1}},\\
&(\xi_1)_X=-\frac{\varrho(\psi_1)_X(\psi_1)_{XX}}{((\psi_1)_X^2+1)^{\frac{1}{2}}}\xi_1 -\frac{(\psi_1)_X}{\sqrt{(\psi_1)_X^2+1}},\\
&(\xi_1)_Y=\frac{1}{\sqrt{(\psi_1)_X^2+1}},\\
&(\xi_1)_{XX}=-\frac{\varrho^2(\psi_1)_{XX}^2 +\varrho^2(\psi_1)_X(\psi_1)_{XX}}{(\psi_1)_X^2+1}\xi_1 +\frac{3\varrho^2(\psi_1)_X^2(\psi_1)_{XX}^2}{((\psi_1)_X^2+1)^2}\xi_1 +\frac{\varrho((\psi_1)_X^2-1)(\psi_1)_{XX}}{((\psi_1)_X^2+1)^{\frac{3}{2}}},\\
&(\xi_1)^2_X+(\xi_1)^2_Y-1=\left(\Big(\frac{\varrho(\psi_1)_X(\psi_1)_{XX}}{((\psi_1)_X^2 +1)^{\frac{3}{2}}}\Big)^2\xi_1^2+2\Big(\frac{\varrho(\psi_1)_X (\psi_1)_{XX}}{((\psi_1)_X^2 +1)^{\frac{3}{2}}}\Big)\frac{(\psi_1)_X}{\sqrt{(\psi_1)_X^2 +1}}\xi_1\right),
\end{aligned}
\ee
where $(\psi_1)_X$, $(\psi_1)_t$ $(\psi_1)_{XX}$,  $(\psi_1)_{tX}$ are taking values at $(\varrho t,\varrho X)$. 
By Lemma~\ref{lemma-sech}, Lemma~\ref{ASY2}, Lemma~\ref{lemma-Ue'}, boundedness of $\|U'_e\|$, $\|U''_e\|$, $\|\partial_{\xi}U'_e\|$, $\|\partial_{x}U'_e\|$ and above formulas, there are constants $C_8>0$ and $C_9>0$ such that
\begin{align*}
|\partial_{\xi\xi}U_{e(t,X)}((\xi_1)^2_X+(\xi_1)^2_Y-1)| +2|\partial_{X}\partial_{\xi} U_{e(t,X)}((\xi_1)_X&-e_1(t,X))|+|\partial_{\xi} U_{e(t,X)}(\xi_1)_{XX}|\\
&\le C_8\varrho (\text{sech}(\varrho X)+\text{sech}(\varrho X-\varrho (c_1-\hat{c}_1)t)),
\end{align*}
and
\begin{align*}
|U'_{e(t,X)}\cdot e_t| +|U''_{e(t,X)}\cdot e_X\cdot e_X| &+|U'_{e(t,X)}\cdot e_{XX}|   +2|\partial_{\xi} U'_{e(t,X)}\cdot e_{X} (\xi_1)_X|+ 2|\partial_{x} U'_{e(t,X)}\cdot e_X|\\
&+ c_1| U'_{e(t,X)}\cdot e_X|\le C_9\varrho (\text{sech}(\varrho X)+\text{sech}(\varrho X-\varrho (c_1-\hat{c}_1)t)),
\end{align*}

Therefore, it follows that 
\begin{align*}
N\widetilde{U}^+\ge &\partial_{\xi} U_{e(t,X)} ((\xi_1)_t-c_1(\xi_1)_X-c_2(\xi_1)_Y+c_{e(t,X)}) -(C_8+C_9) \varrho (\text{sech}(\varrho X)\\
&+ \varrho\text{sech}(\varrho X +\varrho(c_1-\hat{c}_1)t)) -(1+c_1)\varepsilon \varrho^2 (\text{sech}(\varrho X)+ \varrho\text{sech}(\varrho X +\varrho(c_1-\hat{c}_1)t))\\
&+f(X+c_1 t,Y+c_2 t,U_{e(t,x)})-f(X+c_1 t,Y+c_2 t,\widetilde{U}^+)
\end{align*}

We claim that
\begin{claim}\label{claim4}
There exist positive constants $C_{10}$ and $C_{11}$ such that
\begin{align*}
c_1(\xi_1)_X+c_2(\xi_1)_Y&-(\xi_1)_t-c_{e(t,X)}\ge  -C_{10}\varrho( \text{\rm sech}(\varrho X)+\text{\rm sech}(\varrho X+\varrho(c_1-\hat{c}_1)t))|\xi_1|\\
&-C_{10} \varrho \text{\rm sech}(\varrho X+\varrho(c_1-\hat{c}_1)t))
+C_{11}  (\text{\rm sech}(\varrho X)+\text{\rm sech}(\varrho X+\varrho(c_1-\hat{c}_1)t)).
\end{align*}
\end{claim}
In order to not lengthen the proof, we postpone the proof of Claim~\ref{claim4} after the proof of this lemma.

For $\xi_1(t,X,Y)\ge C$ and $\xi_1(t,X,Y)\le -C$ where $C$ is defined by \eqref{eq-C}, it follows from \eqref{lambda} that
$$f(X+c_1 t,Y+c_2 t,U_{e(t,x)})-f(X+c_1 t,Y+c_2 t,\widetilde{U}^+)\ge \lambda\varepsilon (\text{sech}(\varrho x)+\text{sech}(\varrho x+\varrho(c_1-\hat{c}t)))$$
Then, by $\partial_{\xi} U_e<0$, Lemma~\ref{ASY2} and Claim~\ref{claim4}, it follows that
\begin{align*}
N\widetilde{U}^+\ge & -B_1 C_{10}\varrho(\text{sech}(\varrho X)+\text{sech}(\varrho X+\varrho(c_1-\hat{c}_1)t)) -B_2 C_{10} \varrho\text{sech}(\varrho X+\varrho(c_1-\hat{c}_1)t)) \\
&-\Big((C_8+C_9)\varrho+(1+c_1)\varepsilon \varrho^2\Big) (\text{sech}(\varrho X)+ \varrho\text{sech}(\varrho X +\varrho(c_1-\hat{c}_1)t))\\
&+\lambda\varepsilon(\text{sech}(\varrho X)+\text{sech}(\varrho X+\varrho(c_1-\hat{c}t)))\ge 0
\end{align*}
where $B_1=\sup_{e\in\mathbb{S}} \|\partial_{\xi} U_e \xi\|_{L^{\infty}}$ and $B_2=\sup_{e\in\mathbb{S}} \|\partial_{\xi} U_e\|_{L^{\infty}}$,
by taking $0<\varrho\le \varrho(\varepsilon)$ where $\varrho(\varepsilon)$ is small enough such that
\be\label{rho-e}
-B_1 C_{10}\varrho-B_2 C_{10} \varrho-\Big((C_8+C_9)\varrho+(1+c_1)\varepsilon \varrho^2\Big)+\lambda\varepsilon>0, \hbox{ for all $0<\varrho\le \varrho(\varepsilon)$}.
\ee
For $-C\le \xi_1(t,x,y)\le C$, there is $k>0$ such that $-\partial_{\xi}U_{e(t,x)}(\xi_1(t,x,y),x,y)\ge k$. Then, it follows from Claim~\ref{claim4} that
\begin{align*}
N\widetilde{U}^+\ge & kC_{11}  (\text{sech}(\varrho X)+\text{sech}(\varrho X+\varrho(c_1-\hat{c}_1)t)) -B_1 C_{10}\varrho(\text{sech}(\varrho X)+\text{sech}(\varrho X+\varrho(c_1-\hat{c}_1)t))\\
& -B_2 C_{10} \varrho\text{sech}(\varrho X+\varrho(c_1-\hat{c}_1)t)) -\Big((C_8+C_9)\varrho+(1+c_1)\varepsilon \varrho^2\Big) (\text{sech}(\varrho X)\\
&+ \varrho\text{sech}(\varrho X +\varrho(c_1-\hat{c}_1)t)) -M\varepsilon(\text{sech}(\varrho X)+\text{sech}(\varrho X+\varrho(c_1-\hat{c}t)))\ge 0
\end{align*}
where $M=\max_{(x,y,u)\in\R^2\times\R} |f_u(x,y,u)|$,
by \eqref{rho-e}, taking $0<\varepsilon\le \varepsilon_0$ and $\varepsilon_0=\max\{\sigma/2,kC_{11}/(\lambda+M)\}$.

By the comparison principle, $\widetilde{U}^+(t,x,y)$ is a supersolution of \eqref{eq1.1}.

By the definition of $\psi_1(x)$, $\psi_2(x)$ and Lemma~\ref{lemma-c1c2}, one has that
$$\xi_1(t,X,Y)\rightarrow X\cos\alpha+Y\sin\alpha=x\cos\alpha+y\sin\alpha-c_{\alpha}t \hbox{ as } X\rightarrow -\infty,$$
and
$$\xi_1(t,X,Y)\rightarrow X\cos\theta+Y\sin\theta=x\cos\theta+y\sin\theta-c_{\theta}t \hbox{ as } X\rightarrow +\infty.$$
Then, by similar arguments as in Step~2 of the proof of Lemma~\ref{U+}, one can get \eqref{lemma4.2-1} and \eqref{lemma4.2-2}. The inequality \eqref{lemma4.2-3} can be gotten by comparing $\widetilde{U}^+(t,x,y)$ with $U_{\alpha}(x\cos\alpha+y\sin\alpha -c_{\alpha}t,x,y)$, $U_{\beta}(x\cos\beta+y\sin\beta -c_{\beta}t,x,y)$, $U_{\theta}(x\cos\theta+y\sin\theta -c_{\theta}t,x,y)$ respectively for $t$ negative enough through similar arguments as in Step~3 of the proof of Lemma~\ref{U+}. This completes the proof.
\end{proof}
\vskip 0.3cm

We then prove Claim~\ref{claim4}.

\begin{proof}[Proof of Claim~\ref{claim4}]
From \eqref{fomulas}, one has that
\begin{align*}
c_1(\xi_1)_X+c_2(\xi_1)_Y-(\xi_1)_t-c_{e(t,X)}=&-c_1\frac{\varrho(\psi_1)_X(\psi_1)_{XX}}{((\psi_1)_X^2+1)^{\frac{1}{2}}}\xi_1- c_1 \frac{(\psi_1)_X}{\sqrt{(\psi_1)_X^2+1}}+ c_2 \frac{1}{\sqrt{(\psi_1)_X^2+1}}\\
&+\frac{\varrho(\psi_1)_X(\psi_1)_{tX}}{(\psi_1)_X^2+1}\xi_1 +\frac{(\psi_1)_t}{\sqrt{(\psi_1)^2_X +1}}-c_{e(t,X)}.
\end{align*}
Then, by Lemma~\ref{lemma-sech} and the definition of $\psi_1$, there is $C_{10}>0$ such that 
\be\label{claim5-1}
\Big|-c_1\frac{\varrho(\psi_1)_X(\psi_1)_{XX}}{((\psi_1)_X^2+1)^{\frac{1}{2}}}\xi_1+\frac{\varrho(\psi_1)_X(\psi_1)_{tX}}{(\psi_1)_X^2+1}\xi_1\Big|\le C_{10}\varrho(\text{sech}(\varrho X) +\text{sech}(\varrho X+\varrho(c_1-\hat{c}_1)t))|\xi_1|,
\ee
and
\be\label{claim5-2}
\left|\frac{(\psi_1)_t}{\sqrt{(\psi_1)^2_X +1}}\right|\le  C_{10}\varrho\text{sech}(\varrho X+\varrho(c_1-\hat{c}_1)t).
\ee
Let $\theta(t,X)=\arccos(e_1(t,X))$. Then, $e(t,X)=(\cos\theta(t,X),\sin\theta(t,X))$. By the definition of $\psi_1(t,X)$, one has $\alpha<\theta(t,X)<\theta$. It follows from Lemma~\ref{lemma-c1c2} that
\be\label{claim5-3}
\begin{aligned}
- c_1 \frac{(\psi_1)_X}{\sqrt{(\psi_1)_X^2+1}}+ c_2 \frac{1}{\sqrt{(\psi_1)_X^2+1}}-c_{e(t,X)}=&(c_1,c_2)(\cos\theta(t,X),\sin\theta(t,X))-c_{\theta(t,X)}\\
=&c_{\alpha\theta} e_{\alpha\theta}(\cos\theta(t,X),\sin\theta(t,X))-c_{\theta(t,X)}>0.
\end{aligned}
\ee
Notice that $c_e>0$ for all $e\in\mathbb{S}$. By Lemma~\ref{lemma-c1c2}, one has that
$$e_{\alpha\theta}\cdot (\cos\theta(t,X),\sin\theta(t,X))>0, \hbox{ for all $X\in\R$}.$$
Let 
$$h(s)=\frac{c_{s}}{e_{\alpha\theta}\cdot (\cos s,\sin s)}.$$
Notice that $h(\alpha)=c_{\alpha\beta}$. Also notice that $e_1(t,X)\rightarrow\cos\alpha$ as $X\rightarrow -\infty$ and $\theta(t,X)\rightarrow \alpha$ as $X\rightarrow -\infty$ for $X$ being very negative, one has that
\be\label{claim5-4}
\begin{aligned}
&c_{\alpha\theta} e_{\alpha\theta}(\cos\theta(t,X),\sin\theta(t,X))-c_{\theta(t,X)}\\
=& e_{\alpha\theta}\cdot (\cos\theta(t,X),\sin\theta(t,X)) (h(\alpha)-h(\theta(t,X)))\\
=& e_{\alpha\theta}\cdot (\cos\theta(t,X),\sin\theta(t,X)) (h'(\alpha)(\alpha-\theta(t,X))+o(|\alpha-\theta(t,X)|))
\end{aligned}
\ee
Remember that $h'(\alpha)<0$ by the assumptions of Theorem~\ref{th5}. 
Moreover, by the formulas in the proof of Lemma~\ref{lemma4.2}, there is $C_{11}>0$ such that
\be\label{claim5-5}
\begin{aligned}
\alpha-\theta(t,X)=&\int_{-\infty}^{X} \theta_X(t,s)ds=\int_{-\infty}^{X} \frac{\varrho (\psi_1)_{XX}(\varrho t,\varrho s)}{(\psi_1)^2_{X}(\varrho t,\varrho s)+1}ds\\
\ge& \frac{1}{\|(\psi_1)_X\|^2_{L^{\infty}}+1}((\psi_1)_{X}(\varrho t,\varrho X)+\cot\alpha)\ge C_{11} ( \text{sech}(\varrho X)+ \text{sech}(\varrho X+\varrho (c_1-\hat{c}t))).
\end{aligned}
\ee
By \eqref{claim5-1}-\eqref{claim5-5}, we have our conclusion.
\end{proof}
\vskip 0.3cm

Now, we turn to prove Theorem~\ref{th5}.

\begin{proof}[Proof of Theorem~\ref{th5}]
Let $u_n(t,x,y)$ be the solution of \eqref{eq1.1} for $t\ge -n$ with initial data
$$u_n(-n,x,y)=U^-_{\alpha\theta\beta}(-n,x,y),$$
where
\begin{align*}
U^-_{\alpha\theta\beta}(t,x,y)=\max\{U_\alpha(x\cos\alpha+y\sin\alpha-c_{\alpha} t,x,y),U_\theta(&x\cos\theta+ y\sin\theta-c_{\theta} t,x,y),\\
&U_\beta(x\cos\beta+y\sin\beta-c_{\beta} t,x,y)\}.
\end{align*}
By Lemma~\ref{lemma4.2}, it follows from the comparison principle that
\be\label{atble}
U^-_{\alpha\theta\beta}(t,x,y)\le u_n(t,x,y)\le \widetilde{U}^+(t,x,y), \hbox{ for $-n\le t\le T$ and $(x,y)\in\R^2$},
\ee
wherer $T$ is a negative constant such that Lemma~\ref{lemma4.2} holds for $-\infty<t\le T$. Since $U^-_{\alpha\theta\beta}(t,x,y)$ is a subsolution, the sequence $u_n(t,x,y)$ is increasing in $n$. Letting $n\rightarrow +\infty$ and by parabolic estimates, the sequence $u_n(t,x,y)$ converges to an entire solution $u(t,x,y)$ of \eqref{eq1.1}.

By \eqref{atble}, $u(t,x,y)$ satisfies
\be\label{atb}
U^-_{\alpha\theta\beta}(t,x,y)\le u(t,x,y)\le \widetilde{U}^+(t,x,y), \hbox{ for $t\le T$ and $(x,y)\in\R^2$}.
\ee
Moreover, by \eqref{lemma4.2-1}, \eqref{lemma4.2-2} and since $\varepsilon$ can be arbitrary small, one can get that $u(t,x,y)$ satisfies
\be\label{u-at}
\lim_{R\rightarrow +\infty}\sup_{x\le 0, ((x,y)-c_{\alpha\theta}e_{\alpha\theta} t)^2>R^2} \left|u(t,x,y)-U^-_{\alpha\theta}(t,x,y)\right|=0,
\ee
and
\be\label{u-bt}
\lim_{R\rightarrow +\infty}\sup_{x> 0, ((x,y)-c_{\beta\theta}e_{\beta\theta} t)^2>R^2} \left|u(t,x,y)-U^-_{\beta\theta}(t,x,y)\right|=0,
\ee
for $t$ negative enough.
Now, we consider the half plane $H:=\{(x,y)\in\R^2; x<0\}$. Take any sequence $\{t_n\}_{n\in\mathbb{N}}$ of $\R$ such that $t_n\rightarrow -\infty$ as $n\rightarrow +\infty$. Notice that for any $n$, there are $k^1_n$, $k^2_n\in\mathbb{Z}$ and $x'_n\in[0,L_1)$, $y'_n\in [0,L_2)$ such that $c_1 t_n=k^1_n L_1+x'_n$ and $c_2 t_n=k^2_n L_2+y'_n$. Moreover, up to extract subsequences of $c_1 t_n$ and $c_2 t_n$, there are $x'_*\in [0,L_1]$ and $y'_*\in [0,L_2]$ such that $x_n'\rightarrow x'_*$ and $y'_n\rightarrow y'_*$ as $n\rightarrow +\infty$. Let $v_n(t,x,y)=u(t+t_n,x+c_1 t_n,y+c_2 t_n)$ and $H_n=H-c_1 t_n$. Then, $H_n\rightarrow \R^2$ as $n\rightarrow +\infty$. Since $f(x,y,\cdot)$ is $L$-periodic in $(x,y)$, one has that $f(x+c_1 t_n,y+c_2 t_n,\cdot)\rightarrow f(x+x'_*,y+y'_*,\cdot)$ By parabolic estimates, $v_n(t,x,y)$, up to extract of a subsequence, converges to a solution $v_{\infty}(t,x,y)$ of 
\be\label{eq-v}
v_t-\Delta v=f(x+x'_*,y+y'_*,v),\quad\,(t,x,y)\in\R\times\R^2.
\ee
By the definitions of $c_1$ and $c_2$, one can easily check that 
\be\label{hatU}
U^-_{\alpha\theta}(t+t_n,x+x_n,y+y_n)\rightarrow \hat{U}^-_{\alpha\theta}(t,x,y), \hbox{ as $n\rightarrow +\infty$ uniformly in $\R\times\R^2$},
\ee
where
$$\hat{U}^-_{\alpha\theta}(t,x,y):=\max\{U_\alpha(x\cos\alpha+y\sin\alpha-c_{\alpha} t,x+x'_*,y+y'_*),U_\theta(x\cos\theta+y\sin\theta-c_{\theta} t,x+x'_*,y+y'_*)\}.$$ 
 By \eqref{u-at}, it follows that
$$\lim_{R\rightarrow +\infty}\sup_{((x,y)-c_{\alpha\theta}e_{\alpha\theta} t)^2>R^2} \left|v_{\infty}(t,x,y)-\hat{U}^-_{\alpha\theta}(t,x,y)\right|=0.$$
By uniqueness of the curved front, one then has that $v_{\infty}(t,x,y)\equiv \hat{V}_{\alpha\theta}(t,x,y)$ where $\hat{V}_{\alpha\theta}(t,x,y)$ is the curved front of \eqref{eq-v} satisfying
\be\label{hatV}
\lim_{R\rightarrow +\infty}\sup_{((x,y)-c_{\alpha\theta}e_{\alpha\theta} t)^2>R^2} \left|\hat{V}_{\alpha\theta}(t,x,y)-\hat{U}^-_{\alpha\theta}(t,x,y)\right|=0.
\ee
Thus, for any fixed $t$,
$$v_n(t,x,y)\rightarrow \hat{V}_{\alpha\theta}(t,x,y), \hbox{ as $n\rightarrow +\infty$ locally uniformly in $H_n$}.$$
By \eqref{u-at}, \eqref{hatU} and \eqref{hatV}, the above convergence is uniform in $\overline{H_n}$. Thus, for any fixed $t$,
$$u(t+t_n,x+c_1 t_n,y+ c_2 t_n)\rightarrow \hat{V}_{\alpha\theta}(t,x,y), \hbox{ as $n\rightarrow +\infty$ uniformly in $\overline{H_n}$}.$$
That implies
\be\label{uhatV}
u(t+t_n,x,y)\rightarrow \hat{V}_{\alpha\theta}(t,x-c_1 t_n,y-c_2 t_n), \hbox{ as $n\rightarrow +\infty$ uniformly in $\overline{H}$}.
\ee
By above arguments applied to $\hat{V}_{\alpha\theta}(t-t_n+t_0,x-c_1 t_n,y-c_2 t_n)$ for arbitrary $t_0\in\R$, one can get that
\begin{align*}
\hat{V}_{\alpha\theta}(t-t_n+t_0,x-c_1 t_n,y-c_2 t_n)\rightarrow V_{\alpha\theta}(t+t_0,x,y), &\hbox{ as $n\rightarrow +\infty$ locally uniformly for $t\in\R$}\\
&\hbox{ and uniformly for $(x,y)\in\R^2$}.
\end{align*}
Since $t_0$ is arbitrary, the above convergence is also uniform for $t\in\R$. Thus, by \eqref{uhatV}, one gets that
$$u(t,x,y)\rightarrow V_{\alpha\theta}(t,x,y), \hbox{ as $t\rightarrow -\infty$ uniformly in $\overline{H}$}.$$
Similarly, one can prove that $u(t,x,y)\rightarrow V_{\beta\theta}(t,x,y)$ as $t \rightarrow -\infty$ uniformly in $\R^2\setminus H$.

On the other hand, for fixed $T<0$ such that Lemma~\ref{lemma4.2} holds, one can easily check that
$$\lim_{R\rightarrow +\infty}\sup_{x^2+y^2>R^2} \left|U^-_{\alpha\theta\beta}(T,x,y)-U^-_{\alpha\beta}(T,x,y)\right|=0,$$
and
$$\lim_{R\rightarrow +\infty} \sup_{x^2+y^2>R^2} \left|\widetilde{U}^+(T,x,y)-U^-_{\alpha\beta}(T,x,y)\right|\le 2\varepsilon.$$
Since $\varepsilon$ can be arbitrary small and by \eqref{atb}, one has that 
$$\lim_{R\rightarrow +\infty} \sup_{x^2+y^2>R^2} \left|u(T,x,y)-U^-_{\alpha\beta}(T,x,y)\right|=0.$$
By stability of the curved front, that is, Theorem~\ref{th4}, one has that
$$u(t,x,y)\rightarrow V_{\alpha\beta}(t,x,y), \hbox{ as $t\rightarrow +\infty$ uniformly in $\R^2$}.$$
This completes the proof of Theorem~\ref{th5}.
\end{proof}
\vskip 0.3cm

Finally, we prove Corollary~\ref{cor3} which implies that Theorem~\ref{th5} is not empty.

\begin{proof}[Proof of Corollary~\ref{cor3}]
Assume without loss of generality that $e_*=(0,1)$. Since $c_{e_*}=\min_{e\in\mathbb{S}} \{c_e\}$ and $c'_e$ is bounded, there exist $\alpha_1\in (0,\pi/2)$ and $\beta_1\in (\pi/2,\pi)$ such that
$$\frac{d c_{\theta}}{d\theta}\Big|_{\theta=\alpha}=c'_{\alpha}\cdot (-\sin\alpha,\cos\alpha)\le 0 \hbox{ for $\alpha\in [\alpha_1,\frac{\pi}{2}]$},$$
 and 
 $$ \frac{d c_{\theta}}{d\theta}\Big|_{\theta=\beta}=c'_{\beta}\cdot (-\sin\beta,\cos\beta)\ge 0 \hbox{ for $\beta\in[\frac{\pi}{2},\beta_1]$}.$$
Let $g(\theta)=c_{\theta}/\sin\theta$. Then, 
$$g'(\theta)=\frac{c'_{\theta}\cdot (-\sin\theta,\cos\theta)}{\sin\theta}-\frac{c_{\theta}\cos\theta}{\sin^2\theta}.$$
One can make $\alpha_1$, $\beta_1$ close to $\pi/2$ such that
$$g'(\alpha)<0 \hbox{ for all $\alpha\in [\alpha_1,\frac{\pi}{2})$ and } g'(\beta)>0 \hbox{ for all $\beta\in [\frac{\pi}{2},\beta_1]$}.$$
Thus, $g(\theta)$ is decreasing from $g(\alpha)$ to $g(\pi/2)$ as $\theta$ varying from $\alpha_1$ to $\pi/2$, and is increasing from $g(\pi/2)$ to $g(\beta)$ as $\theta$ varying from $\pi/2$ to $\beta_1$. By continuity, one can pick $\alpha\in [\alpha_1,\pi/2)$ and $\beta\in (\pi/2,\beta_1]$ such that
$$
g(\alpha)=g(\beta),\ g'(\alpha)<0 \hbox{ and } g'(\beta)>0.
$$
Let $e_1=(\cos\alpha,\sin\alpha)$ and $e_2=(\cos\beta,\sin\beta)$. By Theorem~\ref{th1}, there is a curved front $V_{e_1 e_2}(t,x,y)$ of \eqref{eq1.1} satisfying \eqref{Ve1e2} with $e_0=e_*$. 

Now, by rotating the coordinate, we can assume that $e_*$ is denoted by $(\cos\theta_*,\sin\theta_*)$ where $\theta_*\in (0,\pi/2)$ is small enough. Let $e_1$ and $e_2$ be denoted by $(\cos\theta_1,\sin\theta_1)$ and $\cos\theta_2,\sin\theta_2$ respectively, where $\theta_1$ and $\theta_2$ are close to $\theta_*$. By Corollary~\ref{cor1} and since $\theta_*$ is small enough which means that $\theta_1$ is small enough, there is $\theta_3\in (\pi/2,\pi)$ such that 
$$\frac{c_{\theta_1}}{\sin\theta_1}=\frac{c_{\theta_3}}{\sin\theta_3}:=c_{\theta_1\theta_3},$$
and there is a curved front $V_{\theta_1\theta_3}$ of \eqref{eq1.1} satisfying \eqref{Vf} with $\alpha=\theta_1$, $\beta=\theta_3$ and $c_{\alpha\beta}=c_{\theta_1\theta_3}$. On the other hand, since $\theta_1$ is small enough, it implies that $\theta_3$ is close to $\pi$ enough. Then, since $\theta_2$ is also small enough, one has that $\theta_3-\theta_2$ is close to $\pi$ enough and hence, $(\cos\theta_2,\sin\theta_2)\cdot (\cos\theta_3,\sin\theta_3)=\cos(\theta_3-\theta_2)$ is close to $-1$ enough. By Corollary~\ref{cor2}, there is $e_{**}$ such that \eqref{ce1e2} holds for $e_1=(\cos\theta_2,\sin\theta_2)$, $e_2=(\cos\theta_3,\sin\theta_3)$, $e_0=e_{**}$ and there is a curved front $V_{\theta_2\theta_3}$ of \eqref{eq1.1} satisfying \eqref{Ve1e2}. 

Then, by Theorem~\ref{th5}, there is an entire solution $u(t,x,y)$ of \eqref{eq1.1} satisfying \eqref{th1.8-1} and \eqref{th1.8-2} with $\alpha=\theta_1$, $\theta=\theta_2$, $\beta=\theta_3$.
\end{proof}

\end{document}